\newcommand{\msc}[1]{\begin{center}MSC2000: #1.\end{center}}
\newtheorem{theorem}{Theorem}[section]
\newtheorem{fact}[theorem]{Fact}
\newtheorem{lemma}[theorem]{Lemma}
\newtheorem{proposition}[theorem]{Proposition}
\newtheorem{corollary}[theorem]{Corollary}
\newtheorem{conjecture}[theorem]{Conjecture}
\newtheorem{definition}[theorem]{Definition}
\newtheorem{thm}[theorem]{Theorem}
\newtheorem{prop}[theorem]{Proposition}
\newtheorem{conj}[theorem]{Conjecture}
\numberwithin{equation}{section}
\def\eps{\varepsilon}
\def\LL{\mathcal{L}}
\def\res{\mathrm{res}}
\def\ex{\mathrm{ex}}
\def\Hom{\mathrm{Hom}}
\DeclareMathOperator{\rank}{rank}
\DeclarePairedDelimiter\floor{\lfloor}{\rfloor} 
\DeclarePairedDelimiter\ceil{\lceil}{\rceil}  
\def\COMMENT#1{}
\title{Independent sets in hypergraphs and Ramsey properties of graphs and the integers}
\author{Robert Hancock, Katherine Staden and
 Andrew Treglown}
\thanks{RH: University of Birmingham, United Kingdom, and Czech Academy of Sciences, Prague, Czechia, {\tt hancock@math.cas.cz}. KS: University of Oxford, United Kingdom, {\tt staden@maths.ox.ac.uk}. Research supported by ERC Grant 306493.
AT: University of Birmingham, United Kingdom, {\tt a.c.treglown@bham.ac.uk}. Research supported by EPSRC grant EP/M016641/1.}
\begin{document}

\begin{abstract}
Many important problems in combinatorics and other related areas can be phrased in the language of independent sets in hypergraphs. Recently Balogh, Morris and Samotij~\cite{container1}, and independently Saxton and Thomason~\cite{container2} developed very general container theorems for independent sets in hypergraphs; both of which have seen numerous applications to a wide range of problems.
In this paper we use the container method to give relatively short and elementary proofs of a number of results concerning Ramsey (and Tur\'an properties) of (hyper)graphs and the integers.
In particular:
\begin{itemize}

\item We generalise the \emph{random Ramsey theorem} of R\"odl and Ruci\'nski~\cite{random1, random2, random3} by providing a resilience analogue. Our result 
unifies and generalises several fundamental results in the area including
 the \emph{random version of Tur\'an's theorem} due to Conlon and Gowers~\cite{conlongowers} and Schacht~\cite{schacht}.
\item The above result also resolves a general subcase of the \emph{asymmetric random Ramsey conjecture} of Kohayakawa and Kreuter~\cite{kreu}.
\item All of the above results in fact hold for uniform hypergraphs.
\item For a (hyper)graph $H$, we  determine, up to an error term in the exponent, the number of $n$-vertex (hyper)graphs $G$ that have the Ramsey property with respect to $H$ (that is, whenever $G$ is $r$-coloured, there is a monochromatic copy of $H$ in $G$).
\item We strengthen the \emph{random Rado theorem} of Friedgut, R\"odl and Schacht~\cite{frs} by proving a resilience version of the result.
\item For partition regular matrices $A$ we  determine, up to an error term in the exponent, the number of subsets of $\{1,\dots,n\}$ for which there exists an $r$-colouring which contains no monochromatic solutions to $Ax=0$.
\end{itemize}
Along the way a number of open problems are posed.

\end{abstract}

\date{\today}

\maketitle
\msc{5C30, 5C55, 5D10, 11B75}

\section{Introduction}

Recently, the
 \emph{container method} has  developed  as a powerful tool for attacking problems which reduce to counting independent sets in (hyper)graphs. Loosely speaking, container results typically state that the independent sets of a given (hyper)graph $H$ lie only in a `small' number of subsets of the vertex set of $H$ (referred to as \emph{containers}), where each of these containers is an `almost independent set'. 
The method has been of particular importance because a diverse range of problems in combinatorics and other areas can be  rephrased into this setting. For example,  container results have been used to tackle problems arising  in Ramsey theory, combinatorial number theory, positional games, list colourings of graphs and $H$-free graphs.

Although the container method has seen an explosion in applications over the last few years, the technique actually dates back to work of Kleitman and Winston~\cite{kw1, kw2} from more than 30 years ago; they constructed a relatively simple algorithm that can be used to produce \emph{graph} container results. The catalysts for recent advances in the area are the \emph{hypergraph} container theorems of Balogh, Morris and Samotij~\cite{container1} and  Saxton and Thomason~\cite{container2}. Both works yield very general container theorems for hypergraphs whose edge distribution satisfies  certain boundedness conditions. These results are also  related to general transference theorems of Conlon and Gowers~\cite{conlongowers} and Schacht~\cite{schacht}. In particular, these container  and transference theorems can be used to prove a range of combinatorial results in a random setting. See~\cite{bms2} for a survey on the container method.

An overarching aim of the paper is to demonstrate that with the container method at hand, one can give relatively short and elementary proofs of fundamental results concerning Ramsey properties of graphs and the integers. Moreover, our results 
give us a precise understanding about how \emph{resiliently} typical graphs and sets of integers of a given density possess a given Ramsey property.
 In particular, one of our main results is a resilience random Ramsey theorem (Theorem~\ref{ramres}). This result provides a unified framework for studying both the Ramsey and Tur\'an problems in the setting of random (hyper)graphs. In particular, Theorem~\ref{ramres} implies the (so-called 1-statements of the) random Ramsey theorem due to R\"odl and Ruci\'nski~\cite{random1, random2, random3} and the random version of Tur\'an's theorem~\cite{conlongowers, schacht}. Moreover, Theorem~\ref{ramres} also resolves a general subcase  of the asymmetric random Ramsey conjecture of Kohayakawa and Kreuter~\cite{kreu}.
Since Theorem~\ref{ramres} unifies and generalises several fundamental results concerning Ramsey and Tur\'an properties of random (hyper)graphs, we survey these topics in Sections~\ref{supersub}--\ref{supersub2} before we state this result in Section~\ref{supersub3}.

We also prove a sister result to Theorem~\ref{ramres}, a resilience strengthening of the random Rado theorem (Theorem~\ref{radores1new}). Again the container method allows us to give a rather short proof of this result. We further provide results on the enumeration of Ramsey graphs (Theorem~\ref{ramseycount}) and sets of integers without a given Ramsey property (Theorem~\ref{radonumnew}).

The results we prove all correspond to problems concerning tuples of disjoint independent sets in hypergraphs. 
In particular, from the container theorem of Balogh, Morris and Samotij one can easily obtain an analogous result for tuples of  independent sets in hypergraphs (see Proposition~\ref{multi}).
It turns out that many Ramsey-type questions (and other problems) can be naturally phrased in this setting. For example, by Schur's theorem we know that, if $n$ is large, then whenever one $r$-colours the elements of
$[n]:=\{1,\dots,n\}$ there is a monochromatic solution to $x+y=z$. This raises the question of how large can a subset $S \subseteq [n]$ be whilst failing to have this property?
(This problem was first posed back in 1977 by Abbott and Wang~\cite{aw}.)
 Let $H$ be the hypergraph with vertex set $[n]$ in which edges precisely correspond to solutions to $x+y=z$. (Note $H$ will have edges of size $2$ and $3$.)
Then sets $S \subseteq [n]$ without this property are precisely the union of $r$ disjoint independent sets in $H$.

In Section~\ref{sec3} we state the container theorem for tuples of independent sets in hypergraphs.
In Sections~\ref{sec4} and~\ref{sec5} we give our  applications of this container result to enumeration and resilience questions arising in Ramsey theory for graphs and the integers.

\subsection{Resilience in hypergraphs and the integers}\label{ressec}

\subsubsection{Resilience in graphs}
The notion of \emph{graph resilience} has received significant attention in recent years. Roughly speaking, resilience concerns the question of how `strongly'  a graph $G$ satisfies a certain monotone graph property $\mathcal P$. 
\emph{Global resilience} concerns how many edges  one can delete and still ensure the resulting graph has property $\mathcal P$ whilst \emph{local resilience} considers how many edges one can delete at each vertex whilst ensuring the resulting graph has property $\mathcal P$. More precisely, we define the \emph{global resilience of $G$ with respect to $\mathcal{P}$}, $\res(G,\mathcal{P})$, to be the minimum number $t$ such that by deleting $t$ edges from $G$, one can obtain a graph not having $\mathcal{P}$. Many classical results in extremal combinatorics can be rephrased in terms of resilience. For example, Tur\'an's theorem determines the global resilience of $K_n$ with respect to the property of containing $K_r$ (where $r<n$) as a subgraph. 

The systematic study of graph resilience was initiated in a paper of Sudakov and Vu~\cite{suvu}, though such questions had been studied before this.
In particular, a key question in the area is to establish the resilience of various properties of the Erd\H{o}s--R\'enyi random graph $G_{n,p}$.
(Recall that  $G_{n,p}$ has vertex set $[n]$ in which each possible edge is present with probability $p$, independent of all other  choices.)
The local resilience of $G_{n,p}$ has been investigated, for example, with respect to Hamiltonicity~ e.g.~\cite{suvu, lee}, almost spanning trees~\cite{bcs} and embedding subgraphs of small bandwidth~\cite{bkt}.
See~\cite{suvu} and the surveys~\cite{conlonsurvey, sudakovsurvey} for further background on the subject. 
In this paper we study the global resilience of $G_{n,p}$ with respect to Ramsey properties (in fact, as we explain later, we will consider its hypergraph analogue $G^{(k)}_{n,p}$ for $k \geq 2$). 
First we will focus on the graph case.

\subsubsection{Ramsey properties of random graphs}\label{supersub}
An event occurs in $G_{n,p}$ \emph{with high probability} (w.h.p.) if its probability tends to $1$ as $n \rightarrow \infty$.
For many properties $\mathcal{P}$ of $G_{n,p}$, the probability that $G_{n,p}$ has the property exhibits a phase transition, changing from $0$ to $1$ over a small interval.
That is, there is a \emph{threshold} for $\mathcal{P}$: a function $p_0 = p_0(n)$ such that $G_{n,p} \text{ has }\mathcal{P}$ w.h.p.~when $p \gg p_0$ (the \emph{$1$-statement}), while $G_{n,p} \text{ does not have }\mathcal{P}$ w.h.p.~when $p \ll p_0$ (the \emph{$0$-statement}).
Indeed, Bollob\'as and Thomason~\cite{bolthom} proved that every \emph{monotone} property $\mathcal{P}$ has a threshold.

Given a graph $H$, set $d_2(H):=0$ if $e(H)=0$; $d_2(H):=1/2$ when $H$ is precisely an edge and define $d_2(H) := (e(H)-1)/(v(H)-2)$ otherwise.
Then define $m_2(H) := \max_{H' \subseteq H}d_2(H')$ to be the \emph{$2$-density} of $H$.
This graph parameter turns out to be very important when determining the threshold for certain properties in $G_{n,p}$ concerning the containment of a small subgraph $H$, which we explain further below.

Given $\eps >0$ and a graph $H$, we say that a graph $G$ is \emph{$(H,\eps)$-Tur\'an} if every subgraph of $G$ with at least
$(1-\frac{1}{\chi (H)-1}+\eps)e(G)$ edges contains a copy of $H$.
Note that the Erd\H{o}s--Stone theorem implies that $K_n$ is $(H,\eps)$-Tur\'an for any fixed $H$ provided $n$ is sufficiently large.
To motivate the definition, consider any graph $G$. Then by considering a random 
  partition of $V(G)$ into $\chi(H)-1$ parts (and then removing any edge contained within a part)
we see that there is a subgraph $G'$ of $G$ that is $(\chi(H)-1)$-partite where $e(G')\geq (1-\frac{1}{\chi(H)-1})e(G)$. In particular, $H\not \subseteq G'$.
Intuitively speaking, this implies that (up to the $\eps$ term), $(H,\eps)$-Tur\'an graphs are those graphs that most \emph{strongly} contain $H$.

Rephrasing to  the language of resilience, we see that if, for any $\eps>0$, $G$ is $(H,\eps)$-Tur\'an, then $\res(G,\mathcal P) = (\frac{1}{\chi(H)-1} \pm \eps)e(G)$, and vice versa, where $\mathcal P$ is the property of containing $H$ as a subgraph.
(Note that we write $x = a\pm b$ to say that the value of $x$ is some real number in the interval $[a-b,a+b]$.)
The global resilience of $G_{n,p}$ with respect to the Tur\'an problem has been extensively studied. 
Indeed, a recent trend in combinatorics and probability concerns so-called \emph{sparse random} analogues of extremal theorems (see~\cite{conlonsurvey}), and determining when $G_{n,p}$ is $(H,\eps)$-Tur\'an is an example of such a result.

If $p \leq cn^{-1/m_2(H)}$ for some small constant $c$,
then it is not hard to show that w.h.p.~$G_{n,p}$ is not $(H,\eps)$-Tur\'an.
In \cite{hax1, hax2, klr} it was conjectured that w.h.p.~$G_{n,p}$ is $(H,\eps)$-Tur\'an provided that $p\geq Cn^{-1/m_2(H)}$, where $C$ is a (large) constant.
After a number of partial results, this conjecture was confirmed by Schacht~\cite{schacht} and (in the case when $H$ is strictly $2$-balanced, i.e. $m_2(H')<m_2(H)$ for all $H' \subset H$) by Conlon and Gowers~\cite{conlongowers} .

\begin{thm}[\cite{schacht, conlongowers}]\label{randomturan}
For any graph $H$ with $\Delta(H) \geq 2$ and any $\eps >0$, there are positive constants $c,C$ such that
$$\lim _{n \rightarrow \infty} \mathbb P [G_{n,p} \text{ is } (H,\eps)\text{-Tur\'an}]=\begin{cases}
0 &\text{ if } p<cn^{-1/m_2(H)}; \\
 1 &\text{ if } p>Cn^{-1/m_2(H)}.\end{cases}$$
\end{thm}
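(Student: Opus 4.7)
The plan is to handle the $0$- and $1$-statements separately. We may assume $\eps < \tfrac{1}{\chi(H)-1}$, as otherwise the $(H,\eps)$-Tur\'an property is vacuous for every graph and both statements are automatic. The $1$-statement is the substantive content and will be proved via the container method (in the single colour class case of the container result used throughout the paper), while the $0$-statement admits a direct deletion argument.

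For the $1$-statement, I would let $\mathcal{H}$ be the auxiliary hypergraph with $V(\mathcal{H}) = E(K_n)$ whose hyperedges are the edge sets of the copies of $H$ in $K_n$, so that the independent sets of $\mathcal{H}$ are precisely the edge sets of $H$-free graphs on $[n]$. The parameter $m_2(H)$ arises exactly to control the codegrees of $\mathcal{H}$: for each subgraph $H' \subseteq H$ and each set of ``seed'' edges in $K_n$, the number of ways to extend to a copy of $H$ is governed by $d_2(H')$, and the worst case is $m_2(H)$. Feeding this into the container theorem should yield a family $\mathcal{C}$ of containers $C \subseteq E(K_n)$ such that every $H$-free graph on $[n]$ is contained (as an edge set) in some $C \in \mathcal{C}$; each $C$ satisfies $e(C) \leq (1 - \tfrac{1}{\chi(H)-1} + \tfrac{\eps}{2})\binom{n}{2}$ by Erd\H{o}s--Stone applied after a supersaturation or stability step; and $|\mathcal{C}| \leq \exp(Kn^{2-1/m_2(H)}\log n)$ for some $K = K(H)$. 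If $G_{n,p}$ fails to be $(H,\eps)$-Tur\'an then some $H$-free $G' \subseteq G_{n,p}$ with $e(G') \geq (1 - \tfrac{1}{\chi(H)-1} + \eps)e(G_{n,p})$ lies inside some $C \in \mathcal{C}$, forcing $|E(G_{n,p}) \cap C|$ to exceed its expectation $p|C|$ by $\Omega(pn^2)$. A Chernoff bound gives this probability at most $\exp(-\Omega(pn^2))$ for each fixed $C$, and a union bound combined with $pn^2 \geq Cn^{2-1/m_2(H)}$ is $o(1)$ once $C$ is taken large enough in terms of $K$ and $\eps$.

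For the $0$-statement, I would pick $H' \subseteq H$ attaining $d_2(H') = m_2(H)$; the condition $\Delta(H) \geq 2$ ensures $e(H') \geq 2$. A direct calculation using $e(H') = 1 + d_2(H')(v(H')-2)$ gives $\mathbb{E}[X_{H'}] = \Theta(n^{v(H')}p^{e(H')}) = \Theta(c^{e(H')-1}\, pn^2)$ when $p = cn^{-1/m_2(H)}$, where $X_{H'}$ counts the labelled copies of $H'$ in $G_{n,p}$. Choosing $c$ small in terms of $\eps$ makes this at most $\tfrac{\eps}{10}\,pn^2$, so Markov's inequality combined with Chernoff concentration of $e(G_{n,p})$ implies that, w.h.p., one can destroy every copy of $H'$ in $G_{n,p}$ by removing at most $\tfrac{\eps}{5}e(G_{n,p})$ edges. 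The resulting graph is $H'$-free (hence $H$-free) and retains more than $(1 - \tfrac{1}{\chi(H)-1} + \eps)e(G_{n,p})$ edges, witnessing that $G_{n,p}$ is not $(H,\eps)$-Tur\'an.

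The main obstacle is the $1$-statement, and specifically the verification that the codegree function of the $H$-copy hypergraph $\mathcal{H}$ satisfies the boundedness hypothesis of the container theorem uniformly across all subgraph orders. This is precisely where the distinction between $d_2(H)$ and $m_2(H)$ matters: the worst codegree behaviour comes from the densest subgraph of $H$, which is why $m_2(H)$ (and not $d_2(H)$) governs the threshold. Once the container step is in place, the remainder is a standard Chernoff-and-union-bound calculation, and the $0$-statement is then essentially classical.
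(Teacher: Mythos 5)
The paper does not prove Theorem~\ref{randomturan} directly: it is cited from Schacht and Conlon--Gowers, and the paper's relation to it is that Theorem~\ref{ramres} (proved via the container theorem, Theorem~\ref{ramseycont}) recovers the $1$-statement in the special case $k=2$, $r=1$. Your container argument for the $1$-statement is essentially the route the paper takes for its generalisation: build the $e(H)$-uniform hypergraph of copies of $H$ on $E(K_n)$, verify the codegree condition using $m_2(H)$ (cf.\ Proposition~\ref{copieshyp}), obtain containers of density at most $1-\frac{1}{\chi(H)-1}+\frac{\eps}{2}$ via supersaturation/Erd\H{o}s--Stone, and finish with Chernoff and a union bound over fingerprints. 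That part of your sketch is sound and aligned with the paper.

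The $0$-statement, however, has a real gap. At $p=cn^{-1/m_2(H)}$ you correctly compute $\mathbb{E}[X_{H'}]=\Theta\bigl(c^{e(H')-1}pn^2\bigr)$, so taking $c$ small makes the mean a small constant fraction of $pn^2$; but Markov's inequality then gives only $\mathbb{P}\bigl[X_{H'}>\frac{\eps}{10}pn^2\bigr]=O(c^{e(H')-1}/\eps)$, a constant bounded away from both $0$ and $1$, not a quantity tending to $0$. Chernoff for $e(G_{n,p})$ does not help with $X_{H'}$. To get a genuine ``with high probability'' statement you need concentration of $X_{H'}$, e.g.\ a second-moment (Chebyshev) calculation or Janson/Kim--Vu. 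The relevant variance check reduces to showing $n^{v(F)}p^{e(F)}\to\infty$ for every subgraph $F\subseteq H'$ with $e(F)\geq 1$, i.e.\ $m_2(H)>m(H')$ where $m(H')=\max_{F\subseteq H'}e(F)/v(F)$. This is true under $\Delta(H)\geq 2$ (use that $d_2(F)>d(F)$ whenever $2e(F)>v(F)$, together with $m_2(H)\geq 1$), but it is a step that must be stated and verified, and Markov must be replaced. You should also ensure your constants are consistent: deleting $\frac{\eps}{5}e(G_{n,p})$ edges leaves density $1-\frac{\eps}{5}$, and for this to exceed $1-\frac{1}{\chi(H)-1}+\eps$ you need $\eps<\frac{5}{6(\chi(H)-1)}$, slightly stronger than your stated assumption $\eps<\frac{1}{\chi(H)-1}$.
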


Given an integer $r$, an \emph{$r$-colouring} of a graph $G$ is a function $\sigma:E(G) \rightarrow [r]$. (So this is not necessarily a proper colouring.) 
We say that $G$ is \emph{$(H,r)$-Ramsey} if every $r$-colouring of $G$ yields a monochromatic copy of $H$ in $G$. 
Observe that being $(H,1)$-Ramsey is the same as containing $H$ as a subgraph.
So the 1-statement of Theorem~\ref{randomturan} says that, given $\eps>0$, there exists a positive constant $C$ such that, if $p > Cn^{-1/m_2(H)}$, then
\begin{equation}\label{resintro}
\lim_{n \rightarrow \infty} \mathbb{P}\left[\frac{\res(G_{n,p},(H,1)\text{-Ramsey})}{e(G_{n,p})} = \frac{1}{\chi(H)-1} \pm \eps \right] = 1.
\end{equation}


The following result of R\"odl and Ruci\'nski~\cite{random1, random2, random3} yields a random version of Ramsey's theorem.

\begin{thm}[\cite{random1, random2, random3}]\label{randomramsey} Let $r \geq 2$ \COMMENT{The case $r=1$ (which is simply $G_{n,p}$ containing $H$) has threshold $n^{-1/m(H)}$, proven for balanced graphs by Erd\H{o}s and R\'enyi, 1960.}be a positive integer and let $H$ be a graph that is not a forest consisting of stars and paths of length $3$.
There are positive constants $c,C$ such that
$$\lim _{n \rightarrow \infty} \mathbb P [G_{n,p} \text{ is } (H,r)\text{-Ramsey}]=\begin{cases}
0 &\text{ if } p<cn^{-1/m_2(H)}; \\
 1 &\text{ if } p>Cn^{-1/m_2(H)}.\end{cases}$$
\end{thm}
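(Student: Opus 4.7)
The result splits into a $0$-statement and a $1$-statement; I would handle them separately. For the $0$-statement --- that w.h.p.\ $G_{n,p}$ admits a good $r$-colouring when $p<cn^{-1/m_2(H)}$ --- I would follow the original R\"odl--Ruci\'nski deletion argument: at this density the copies of $H$ in $G_{n,p}$ overlap rarely enough that after deleting a negligible fraction of edges the remaining copies can be coloured greedily to avoid monochromatic $H$. The container method gives no new input here, and the exclusion of stars and length-$3$ paths in the hypothesis is precisely what makes this deletion scheme valid at the $n^{-1/m_2(H)}$ scale.

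The $1$-statement is where the container approach shines. I would translate it into the language of independent sets. Form the $e(H)$-uniform hypergraph $\mathcal H$ on vertex set $E(K_n)$ whose hyperedges are the edge-sets of copies of $H$ in $K_n$; then an $r$-colouring of $G\subseteq K_n$ with no monochromatic $H$ is exactly a partition of $E(G)$ into $r$ independent sets of $\mathcal H$. Standard counts of copies of $H$ through prescribed subsets of edges verify that $\mathcal H$ satisfies the boundedness conditions of the container theorem with parameter $\tau\asymp n^{-1/m_2(H)}$; this is precisely where $m_2(H)$ enters the threshold.

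Applying the multi-independent-set container result (Proposition~\ref{multi}) to $\mathcal H$ produces a family of container $r$-tuples $(C_1,\dots,C_r)$, each indexed by a fingerprint $(T_1,\dots,T_r)$ with $T_j\subseteq I_j$ and $|T_j|=O(n^{2-1/m_2(H)})$, such that (i) every valid $r$-colouring of any subgraph of $K_n$ is captured by some tuple via $I_j\subseteq C_j$; and (ii) each $C_j$ contains only an $\eps$-fraction of the copies of $H$, so by Erd\H os--Simonovits supersaturation, $|C_j|\le\ex(n,H)+o(n^2)$.

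The final step is a union bound: since $T_j\subseteq E(G_{n,p})$ and $E(G_{n,p})\subseteq\bigcup_jC_j$ are events on disjoint edge sets,
$$\Pr\bigl[G_{n,p}\text{ is not }(H,r)\text{-Ramsey}\bigr]\le\sum_{(T_1,\dots,T_r)}p^{|T_1\cup\cdots\cup T_r|}(1-p)^{\binom{n}{2}-|C_1\cup\cdots\cup C_r|}.$$
The main obstacle is that for $r\ge 3$ and $\chi(H)\ge 3$ the sum $\sum_j|C_j|$ can exceed $\binom{n}{2}$, rendering the $(1-p)$-factor useless; the probability must therefore be driven down by the $p^{|\bigcup T_j|}$-factor. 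Balancing the number of fingerprint tuples of total union-size $t$ against the weight $p^t$ telescopes to a bound of the form $\exp(o(pn^2)-\Omega(pn^2))=o(1)$, once the constant $C$ in $p>Cn^{-1/m_2(H)}$ is chosen large. This fingerprint/probability trade-off is the crux of the argument.
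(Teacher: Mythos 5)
Your overall architecture for the $1$-statement — pass to the hypergraph of copies of $H$ on $E(K_n)$, apply the multi-colour container theorem, and run a union bound over fingerprints — is the right skeleton and matches the approach the paper takes (via Theorem~\ref{ramseycont} and then Theorem~\ref{ramres}, which subsumes the $1$-statement of the present theorem). However, there is a genuine gap at the crucial step, and your proposed repair for it is incorrect.

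The gap is in how you bound the size of the container union. You only bound each $C_j$ individually via $1$-colour Erd\H{o}s--Simonovits supersaturation, getting $|C_j|\le\ex(n;H)+o(n^2)=(\pi(H)+o(1))\binom{n}{2}$. That is useless for controlling $|C_1\cup\cdots\cup C_r|$: already for $r=2$, $H=K_3$ the individual bounds give $|C_j|\le(1/2+o(1))\binom{n}{2}$, and the union can a priori be all of $\binom{n}{2}$, killing the $(1-p)$-factor entirely. Your stated resolution — that "the probability must therefore be driven down by the $p^{|\bigcup T_j|}$-factor" — does not work. With $|\bigcup_j C_j|\approx\binom{n}{2}$ the union bound becomes roughly $\sum_{s\le DN}\binom{n^2}{s}(rp)^s\ge (eC/D)^{DN}$ for $p=Cn^{-1/m_2(H)}$ and $N=n^{2-1/m_2(H)}$, which is exponentially \emph{large} in $N$ once $C$ is large compared to $D$; the fingerprint count grows faster than $p^s$ decays. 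The $(1-p)^{\Omega(n^2)}$ term (or, in the paper's variant, a Chernoff bound showing too many container edges fall into $G_{n,p}$) is indispensable.

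What is actually needed is a \emph{multicolour} supersaturation lemma — Theorem~\ref{epsclose} in the paper. The observation is that $\bigcup_j C_j$ admits an $r$-colouring (assign each edge to the least $j$ with $e\in C_j$) in which colour class $j\subseteq C_j$ contains only $o(n^{v(H)})$ copies of $H$. Hence $\bigcup_j C_j$ is $\eps$-weakly $(H,\dots,H)$-Ramsey, and the contrapositive of Theorem~\ref{epsclose} forces $e(\bigcup_j C_j)\le \ex^r(n;H,\dots,H)+\delta\binom{n}{2}=(\pi(H,\dots,H)+o(1))\binom{n}{2}$, which is $(1-\Omega(1))\binom{n}{2}$ since $\pi(H,\dots,H)<1$. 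This — not the individual container bounds — is what makes the $(1-p)$-factor decay like $\exp(-\Omega(pn^2))$ and lets the union bound close. It is also why the relevant Tur\'an-type density for the $r$-colour problem is $\pi(H_1,\dots,H_r)$ (equivalently, $1-\frac{1}{R_\chi(H_1,\dots,H_r)-1}$ in the graph case), rather than $\pi(H)$. Aside from this, your treatment of the $0$-statement by deferring to the original R\"odl--Ruci\'nski deletion argument is appropriate: the container machinery supplies nothing new there, and the paper likewise only cites the $0$-statement.
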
 
Thus $n^{-1/m_2(H)}$ is again the threshold for the $(H,r)$-Ramsey property.
Let us provide some intuition as to why.
The expected number of copies of $H$ in $G_{n,p}$ is $\Theta(n^{v(H)}p^{e(H)})$, while the expected number of edges in $G_{n,p}$ is $\Theta(pn^2)$.
When $p = \Theta(n^{-1/d_2(H)})$, these quantities agree up to a constant.
Suppose that $H$ is \emph{2-balanced},~i.e. $d_2(H)=m_2(H)$.%
\COMMENT{KS: strictly balanced already defined on p3 (We also say that $H$ is \emph{strictly $2$-balanced} if $d_2(H')<d_2(H)$ for all $H' \subsetneq H$.)}
For small $c>0$, when $p<cn^{-1/m_2(H)}$, most copies of $H$ in $G_{n,p}$ contain an edge which appears in no other copy.
Thus we can hope to colour these special edges blue and colour the remaining edges red to eliminate all monochromatic copies of $H$.
For large $C>0$, most edges lie in many copies of $H$, so the copies of $H$ are highly overlapping and we cannot avoid monochromatic copies.
In general, when $H$ is not necessarily 2-balanced, the threshold is $n^{-1/d_2(H')}$ for the `densest' subgraph $H'$ of $H$ since, roughly speaking, the appearance of $H$ is governed by the appearance of its densest part.

We remark that Nenadov and Steger~\cite{ns} recently gave a short proof of Theorem~\ref{randomramsey} using the container method.

\subsubsection{Asymmetric Ramsey properties in random graphs}

It is natural to ask for an \emph{asymmetric} analogue of Theorem~\ref{randomramsey}.
Now, for graphs $H_1,\ldots,H_r$, a graph $G$ is \emph{$(H_1,\ldots,H_r)$-Ramsey} if  for any $r$-colouring  of $G$ there is a copy of $H_i$ in colour $i$ for some   $i \in [r]$.
(This definition coincides with that of $(H,r)$-Ramsey when $H_1=\ldots=H_r=H$.)
Kohayakawa and Kreuter~\cite{kreu} conjectured an analogue of Theorem~\ref{randomramsey} in the asymmetric case.
To state it, we need to introduce the \emph{asymmetric density} of $H_1,H_2$ where $m_2(H_1)\geq m_2(H_2)$ via
\begin{equation}\label{2density}
m_2(H_1,H_2):=\max \left \{\frac{e(H_1')}{v(H_1')-2+1/m_2(H_2)}: H_1' \subseteq H_1 \text{ and } e(H_1') \geq 1 \right \}.
\end{equation}

\begin{conjecture}[\cite{kreu}]\label{conjkreu}
For any graphs $H_1,\ldots,H_r$ with $m_2(H_1) \geq \ldots \geq m_2(H_r)>1$, there are positive constants $c,C > 0$ such that
$$
\lim_{n \rightarrow \infty} \mathbb{P}\left[ G_{n,p} \text{ is } (H_1,\ldots,H_r)\text{-Ramsey} \right] =\begin{cases}
0 &\text{ if } p<cn^{-1/m_2(H_1,H_2)}; \\
 1 &\text{ if } p>Cn^{-1/m_2(H_1,H_2)}.\end{cases}$$
\COMMENT{This is a coarse threshold: They say `sharp' in paper. Also they actually state it with $r=2$.}
\end{conjecture}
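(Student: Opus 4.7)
The plan is to attack the conjecture in two separate halves: the $0$-statement by an explicit colouring construction, and the $1$-statement (which is the natural target of the container machinery developed in this paper) by the multi-coloured container theorem alluded to in the introduction (Proposition~\ref{multi}). I expect to be able to give a clean argument for the $1$-statement in full generality under the assumption $m_2(H_1) \geq m_2(H_2) \geq \ldots \geq m_2(H_r) > 1$, whereas the $0$-statement will only be carried out in a suitable subcase, which is how the paper phrases its contribution to Conjecture~\ref{conjkreu}.

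For the $1$-statement, I would work on the vertex set $V := E(K_n)$ and build $r$ auxiliary hypergraphs $\mathcal{H}_1,\ldots,\mathcal{H}_r$, where the edges of $\mathcal{H}_i$ are precisely the edge sets of copies of $H_i$ in $K_n$. An $r$-edge-colouring of a subgraph $G \subseteq K_n$ with no monochromatic $H_i$ in colour $i$ corresponds exactly to a tuple $(I_1,\ldots,I_r)$ of pairwise disjoint independent sets, one in each $\mathcal{H}_i$, whose union contains $E(G)$. Applying the multi-coloured container result should then produce a family $\mathcal{C}$ of containers $(C_1,\ldots,C_r)$ such that (i) every valid colouring is captured by some element of $\mathcal{C}$, (ii) $\log|\mathcal{C}| = O(n^{2-1/m_2(H_1,H_2)}\log n)$, and (iii) each $C_i$ spans at most $\eps\, n^{v(H_i)}$ copies of $H_i$. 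The choice of $m_2(H_1,H_2)$ in the container bound comes from verifying the Saxton--Thomason/Balogh--Morris--Samotij degree conditions on the product hypergraph: the co-degrees among copies of $H_1$ sharing edges that also lie in copies of $H_2$ will balance against one another exactly in the form of the right-hand side of~\eqref{2density}.

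With the containers in hand, the probabilistic step is a union bound: if $G_{n,p}$ fails to be $(H_1,\ldots,H_r)$-Ramsey then there is some container $(C_1,\ldots,C_r) \in \mathcal{C}$ and a partition $E(G_{n,p}) = E_1 \cup \ldots \cup E_r$ with $E_i \subseteq C_i$ and $E_i$ being $H_i$-free. By an asymmetric extremal/supersaturation argument combined with a Janson-type deletion bound (using that $p > Cn^{-1/m_2(H_1,H_2)}$), the probability of this for a fixed container decays faster than $|\mathcal{C}|^{-1}$, so summing over $\mathcal{C}$ finishes the $1$-statement. For the $0$-statement, I would follow the standard Kohayakawa--Kreuter recipe: when $p < cn^{-1/m_2(H_1,H_2)}$, almost every copy of $H_1$ in $G_{n,p}$ contains an edge lying in no other copy of $H_1$ and in no copy of $H_2$; colouring those edges with colour $1$ and applying an inductive argument on $H_2,\ldots,H_r$ yields a valid colouring.

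The main obstacle is the asymmetric extremal input for the container step: one needs a robust statement saying that if $C_i$ is nearly $H_i$-free for each $i$, then the probability that $G_{n,p}$ admits an edge-partition into the $C_i$'s is small enough to beat $|\mathcal{C}|^{-1}$. For the symmetric case $H_1=\ldots=H_r$ this reduces to the random Tur\'an theorem (Theorem~\ref{randomturan}), but in the genuinely asymmetric case it seems to require a delicate iterative application of containers alternating between the $r$ colour classes. This is precisely why I only expect to obtain the conjecture in the subcase where such an iteration can be made to terminate, namely when the graphs $H_i$ are suitably balanced relative to one another, which matches the scope of the result advertised in the abstract.
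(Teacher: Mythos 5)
What you are asked to compare against is a \emph{conjecture}, not a theorem the paper proves; the paper only contributes a partial result, namely the $1$-statement of Conjecture~\ref{conjkreu} in the special case $m_2(H_1)=m_2(H_2)$ (via Theorem~\ref{ramres} and Corollary~\ref{asymhyprandomramsey}). Your proposal has the division of labour backwards: you claim to prove the $1$-statement ``in full generality'' and restrict only the $0$-statement, whereas the paper's restriction $m_2(H_1)=m_2(H_2)$ is on the $1$-statement and the paper does not touch the $0$-statement at all.

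The central gap is in your claim that the container degree conditions can be verified at $p = n^{-1/m_2(H_1,H_2)}$. When $m_2(H_1) > m_2(H_2)$ we have $m_2(H_1,H_2) < m_2(H_1)$, hence $n^{-1/m_2(H_1,H_2)} < n^{-1/m_2(H_1)}$. But Proposition~\ref{copieshyp} (and the hypothesis of Theorem~\ref{BMS}) requires $\Delta_\ell(\mathcal{H}_1) \leq c\,p^{\ell-1}e(\mathcal{H}_1)/v(\mathcal{H}_1)$, and for the hypergraph of copies of $H_1$ this forces $p \gtrsim n^{-1/m_2(H_1)}$: for $p$ below this the inequality fails for some $\ell$. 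Since the multi-coloured container result Proposition~\ref{asymmulti} imposes the degree condition on \emph{every} $\mathcal{H}_i$, the smallest admissible $p$ is governed by the densest graph $H_1$, not by the pair $(H_1,H_2)$. Thus the straightforward ``apply containers to each colour class'' scheme reaches only $p\gtrsim n^{-1/m_2(H_1)}$, which is exactly what the paper proves, and equals the conjectured threshold if and only if $m_2(H_1)=m_2(H_2)$. Your vague appeal to ``co-degrees among copies of $H_1$ sharing edges that also lie in copies of $H_2$'' is not a concrete mechanism for lowering the threshold, and indeed achieving the genuine asymmetric threshold $n^{-1/m_2(H_1,H_2)}$ required a fundamentally different and more delicate argument; the paper itself notes (as an added remark) that this was only achieved after submission by Mousset, Nenadov and Samotij~\cite{mns}. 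Your closing paragraph correctly senses that there is an obstacle, but misattributes it to the extremal/supersaturation step rather than to the breakdown of the container degree conditions below $n^{-1/m_2(H_1)}$.
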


So the conjectured threshold only depends on the `joint density' of the densest two graphs $H_1,H_2$.
The intuition for this threshold is discussed in detail e.g. in Section~1.1 in~\cite{asymmramsey}.
One can show that $m_2(H_1) \geq m_2(H_1,H_2) \geq m_2(H_2)$ with equality if and only if $m_2(H_1)=m_2(H_2)$.
Thus Conjecture~\ref{conjkreu} would generalise Theorem~\ref{randomramsey}.
Kohayakawa and Kreuter~\cite{kreu} have confirmed Conjecture~\ref{conjkreu} when the $H_i$ are cycles. 
In~\cite{msss} it was observed that the approach used by Kohayakawa and Kreuter~\cite{kreu}  implies the 1-statement of Conjecture~\ref{conjkreu} holds when $H_1$ is strictly 2-balanced \emph{provided} the so-called K\L R conjecture holds.
This latter conjecture was proven by Balogh, Morris and Samotij~\cite{container1} thereby proving the 1-statement of Conjecture~\ref{conjkreu} holds in this case.

{\bf Additional note:} Since the paper was submitted the 1-statement of Conjecture~\ref{conjkreu} has been proven by Mousset, Nenadov and Samotij~\cite{mns}.

\subsubsection{Ramsey properties of random hypergraphs}\label{supersub2}
Consider now the $k$-uniform analogue $G^{(k)}_{n,p}$ of $G_{n,p}$ which has vertex set $[n]$ and in which every $k$-element subset of $[n]$ appears as an edge with probability $p$, independent of all other choices.
Here, we wish to obtain analogues of Theorems~\ref{randomturan},~\ref{randomramsey} and Conjecture~\ref{conjkreu} by determining the threshold for being $(H,\eps)$-Tur\'an, $(H,r)$-Ramsey, and more generally being $(H_1,\ldots,H_r)$-Ramsey.
The definitions of $(H,r)$-Ramsey and $(H_1,\ldots,H_r)$-Ramsey extend from graphs in the obvious way.
Given a $k$-uniform hypergraph $H$, let $\ex(n;H)$ be the maximum size of an $n$-vertex $H$-free hypergraph.
A simple averaging argument shows that the limit
$$
\pi(H) := \lim_{n \rightarrow \infty} \frac{\ex(n;H)}{\binom{n}{k}}
$$
exists.
Now we say that a $k$-uniform hypergraph $G$ is \emph{$(H,\eps)$-Tur\'an} if every subhypergraph of $G$ with at least $(\pi(H)+\eps)e(G)$ edges contains a copy of $H$.
(Since $\pi(H)=1-\frac{1}{\chi(H)-1}$ when $k=2$, this generalises the definition we gave earlier.)
We also need to generalise the notion of $2$-density to \emph{$k$-density}:
Given a $k$-graph $H$, define
$$
d_k(H) := \begin{cases}
0 &\text{ if } e(H)=0; \\
1/k &\text{ if } v(H)=k \text{ and } e(H)=1;\\
\frac{e(H)-1}{v(H)-k} &\text{ otherwise,}
\end{cases}
$$
and let
$$m_k(H) := \max_{H' \subseteq H}d_k(H').$$
The techniques of Conlon--Gowers~\cite{conlongowers} and of Schacht~\cite{schacht} actually extended to a proof of a version of Theorem~\ref{randomturan} for hypergraphs:

\begin{thm}[\cite{conlongowers,schacht}]\label{hyprandomturan}
For any $k$-uniform hypergraph $H$ with maximum vertex degree at least two and any $\eps >0$, there are positive constants $c,C$ such that
$$\lim _{n \rightarrow \infty} \mathbb P [G^{(k)}_{n,p} \text{ is } (H,\eps)\text{-Tur\'an}]=\begin{cases}
0 &\text{ if } p<cn^{-1/m_k(H)}; \\
 1 &\text{ if } p>Cn^{-1/m_k(H)}.\end{cases}$$
\end{thm}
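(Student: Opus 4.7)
The proof splits into the standard 0-statement (a first-moment deletion argument) and the substantially harder 1-statement (where the container method is the natural tool). For the 0-statement, pick $H' \subseteq H$ achieving $d_k(H')=m_k(H)$. Then $\mathbb{E}[\text{copies of }H' \text{ in } G^{(k)}_{n,p}]=\Theta(n^{v(H')}p^{e(H')})$ and $\mathbb{E}[e(G^{(k)}_{n,p})]=\Theta(pn^k)$; the identity $d_k(H')=(e(H')-1)/(v(H')-k)$ shows that for $p<cn^{-1/m_k(H)}$ with $c$ small the first quantity is at most $\eps/(2(\pi(H)+\eps))$ times the second. Concentration (Markov suffices) then lets us delete one edge from each copy of $H'$ and obtain an $H$-free subhypergraph retaining a $(\pi(H)+\eps)$-fraction of $E(G^{(k)}_{n,p})$.

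For the 1-statement I would work with the auxiliary $e(H)$-uniform hypergraph $\mathcal{F}$ on vertex set $\binom{[n]}{k}$ whose edges are the edge-sets of copies of $H$ in $K_n^{(k)}$, so that independent sets of $\mathcal{F}$ are exactly the $H$-free subhypergraphs of $K_n^{(k)}$. Applying the hypergraph container theorem of Balogh--Morris--Samotij or Saxton--Thomason to $\mathcal{F}$ with parameter $\tau$ of order $n^{-1/m_k(H)}$, and combining with the hypergraph supersaturation statement for the Tur\'an density $\pi(H)$, yields a family $\mathcal{C}$ of subsets of $\binom{[n]}{k}$ with the properties: (i) every $H$-free subhypergraph of $K_n^{(k)}$ lies in some $C\in\mathcal{C}$; (ii) $|C|\leq(\pi(H)+\eps/2)\binom{n}{k}$ for every $C\in\mathcal{C}$; and (iii) $\log|\mathcal{C}|=O(n^{k-1/m_k(H)}(\log n)^{O(1)})$.

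The deduction is then a standard union-bound. If $G^{(k)}_{n,p}$ is not $(H,\eps)$-Tur\'an, some $H$-free $G'\subseteq G^{(k)}_{n,p}$ has $e(G')\geq(\pi(H)+\eps)e(G^{(k)}_{n,p})$; taking $C\in\mathcal{C}$ with $G'\subseteq C$ gives $|E(G^{(k)}_{n,p})\cap C|\geq(\pi(H)+\eps)e(G^{(k)}_{n,p})$. A Chernoff bound gives $e(G^{(k)}_{n,p})=(1\pm\eps/10)p\binom{n}{k}$ w.h.p., so for a fixed $C$ we need $|E(G^{(k)}_{n,p})\cap C|$ to exceed its expectation $p|C|\leq(\pi(H)+\eps/2)p\binom{n}{k}$ by an additive $\Omega(\eps p\binom{n}{k})$; this has probability at most $\exp(-\Omega(\eps^2 p\binom{n}{k}))=\exp(-\Omega(\eps^2 C n^{k-1/m_k(H)}))$, which defeats $|\mathcal{C}|$ once the constant $C$ in $p>Cn^{-1/m_k(H)}$ is large enough.

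The main obstacle is the first step: verifying the boundedness (codegree) hypotheses of the container theorem for $\mathcal{F}$ at parameter $\tau\asymp n^{-1/m_k(H)}$. Concretely, for every $j\leq e(H)$ one must bound the maximum $j$-codegree in $\mathcal{F}$, i.e.\ the number of copies of $H$ in $K_n^{(k)}$ containing any prescribed set of $j$ hyperedges. Counting extensions of an embedding subgraph $H''\subseteq H$ shows these codegrees are controlled precisely by the quantities $n^{v(H'')-k\cdot\text{(edges fixed in $H''$)}}$, and the defining inequality $d_k(H'')\leq m_k(H)$ is exactly what converts these into the required bounds in terms of $\tau$. The hypothesis that $H$ has a vertex of degree $\geq 2$ ensures $m_k(H)>1/k$, placing the threshold in the regime where the container/supersaturation machinery is non-vacuous (it rules out matchings, whose threshold behaviour is genuinely different).
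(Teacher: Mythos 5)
This theorem is one the paper \emph{cites} rather than proves: it is background due to Conlon--Gowers and Schacht, and the paper's own contribution on this front is Theorem~\ref{ramres}, whose $r=1$ case recovers the $1$-statement. Your outline of the $1$-statement does take the same route the paper uses for Theorem~\ref{ramres}: build the $e(H)$-uniform hypergraph of copies of $H$ on vertex set $\binom{[n]}{k}$, check the codegree hypotheses of Balogh--Morris--Samotij (Proposition~\ref{copieshyp} in the paper), feed supersaturation into the ``dense'' hypothesis to control container size, and finish with a union bound.

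However, as written your $1$-statement union bound does not close, and the gap is exactly the delicate part. You bound the number of containers by $\exp\bigl(O(n^{k-1/m_k(H)}(\log n)^{O(1)})\bigr)$ and then claim this is defeated by a per-container failure probability of $\exp\bigl(-\Omega(\eps^2 C n^{k-1/m_k(H)})\bigr)$ ``once $C$ is large enough.'' But for any \emph{constant} $C$ the logarithmic factor in the container count dominates, so a naive union bound over containers fails for every fixed $C$; it would only succeed for $p \gtrsim n^{-1/m_k(H)}\log n$. The paper's argument (proof of Theorem~\ref{ramres}, and likewise Theorem~\ref{radores1}) avoids this by unioning over \emph{fingerprints} $S$ of size $s$ and including the factor $p^{s}$ for the event that the fingerprint actually lies in $G^{(k)}_{n,p}$: the sum $\sum_{s\le DN} r^{s}\binom{\binom{n}{k}}{s}p^{s}e^{-\gamma p n^{k}}$ is then controlled because $\binom{\binom{n}{k}}{s}p^{s}$ is only singly exponential in $N=n^{k-1/m_k(H)}$ once $p\ge Cn^{-1/m_k(H)}$. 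This $p^{s}$ term is not cosmetic; without it the argument fails, and you should restructure the union bound around fingerprints.

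Your $0$-statement sketch also has a gap: ``Markov suffices'' is not correct in the regime $p<cn^{-1/m_k(H)}$ with $c$ a fixed constant. At $p=cn^{-1/m_k(H)}$ the expected number of copies of $H'$ is a constant multiple (namely $\Theta(c^{e(H')-1})$) of the expected number of edges, so Markov only gives a failure probability bounded by a small constant, not $o(1)$. To get a genuine $0$-statement one needs a concentration argument for the count of copies of $H'$ (second moment/Chebyshev for a suitably chosen balanced $H'$, or the ``scattered copies'' deletion argument), not just Markov. For $p=o(n^{-1/m_k(H)})$ Markov alone would suffice --- this is essentially the remark the paper makes after Theorem~\ref{ramres} about tightness --- but that is a weaker statement than the $0$-statement you are proving.
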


The $1$-statement of Theorem~\ref{randomramsey} was generalised to hypergraphs by Friedgut, R\"odl and Schacht~\cite{frs} and by Conlon and Gowers~\cite{conlongowers}, proving a conjecture of R\"odl and Ruci\'nski~\cite{random5}. (The special cases of the complete $3$-uniform hypergraph $K^{(3)}_4$ on four vertices and of $k$-partite $k$-uniform hypergraphs were already proved in~\cite{random5},~\cite{rrs} respectively. Also in~\cite{ns} Nenadov and Steger remark that their proof of the 1-statement of Theorem~\ref{randomramsey} extends to Theorem~\ref{hyprandomramsey}.)

\begin{theorem}[\cite{conlongowers,frs}]\label{hyprandomramsey}
Let $r,k \geq 2$ be integers and let $H$ be a $k$-uniform hypergraph with maximum vertex degree at least two.
There is a positive constant $C$ such that
$$
\lim_{n \rightarrow \infty} \mathbb{P}[G^{(k)}_{n,p} \text{ is } (H,r)\text{-Ramsey}] = 1 \quad\text{ if } p > Cn^{-1/m_k(H)}.
$$
\end{theorem}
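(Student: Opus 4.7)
The plan is to encode the $(H,r)$-Ramsey property as a statement about disjoint independent sets in an auxiliary hypergraph and then apply the multi-container theorem Proposition~\ref{multi}. Let $N := \binom{n}{k}$ and define the $e(H)$-uniform hypergraph $\mathcal{H}$ on vertex set $\binom{[n]}{k}$ (the potential edges of a $k$-graph on $[n]$) whose edges are the edge-sets of copies of $H$ in $K_n^{(k)}$. A $k$-graph $G$ on $[n]$ fails to be $(H,r)$-Ramsey precisely when $E(G)$ admits a partition into $r$ $\mathcal{H}$-independent sets. A standard codegree calculation verifies that $\mathcal{H}$ satisfies the hypotheses of the Balogh--Morris--Samotij / Saxton--Thomason container theorem at the scale $p = \Theta(n^{-1/m_k(H)})$; the assumption $\Delta(H) \geq 2$ guarantees $m_k(H) > 1/k$, so that $pN \to \infty$ polynomially in $n$.

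Applying Proposition~\ref{multi} to $\mathcal{H}$ yields a family $\mathcal{F}$ of $r$-tuples of containers $(C_1, \ldots, C_r)$ with the properties: every $r$-tuple of disjoint $\mathcal{H}$-independent sets is captured by some $(C_1, \ldots, C_r) \in \mathcal{F}$; each tuple is determined by a joint fingerprint of small size; and each $C_i$ is ``almost $\mathcal{H}$-independent'', satisfying $e(\mathcal{H}[C_i]) \leq \eta\, e(\mathcal{H})$ for an arbitrarily small prescribed $\eta > 0$. From this last property and standard Tur\'an-type supersaturation we get $|C_i| \leq (\pi(H)+\eps)N$; more importantly, we upgrade this to the \emph{joint} bound
\[
|C_1 \cup \cdots \cup C_r| \leq (\pi_r(H)+\eps)N,
\]
where $\pi_r(H) \in [0,1)$ denotes the $r$-colour Ramsey density of $H$ (the asymptotic maximum of $|U|/N$ over $U \subseteq \binom{[n]}{k}$ admitting an $r$-colouring without a monochromatic copy of $H$; that $\pi_r(H) < 1$ is exactly the finite hypergraph Ramsey theorem for $H$). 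This follows from Ramsey supersaturation for $k$-graphs: the canonical $r$-colouring of $\bigcup_i C_i$ (sending $v$ to any $i$ with $v \in C_i$) has at most $\sum_i e(\mathcal{H}[C_i]) \leq r\eta\, e(\mathcal{H}) = O(\eta\, n^{v(H)})$ monochromatic copies, while if $|\bigcup_i C_i|$ exceeded $(\pi_r(H)+\eps)N$ then Ramsey supersaturation would force $\Omega(n^{v(H)})$ monochromatic copies, contradicting this for $\eta$ small enough.

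The remaining probabilistic step is a fingerprint-enhanced union bound. Any failure of $G_{n,p}^{(k)}$ to be $(H,r)$-Ramsey implies that $E(G_{n,p}^{(k)})$ contains some fingerprint tuple $(T_1, \ldots, T_r)$ and is contained in $C(T_1) \cup \cdots \cup C(T_r)$, so
\[
\mathbb{P}\bigl[G_{n,p}^{(k)} \text{ is not } (H,r)\text{-Ramsey}\bigr] \;\leq\; \sum_{(T_1,\ldots,T_r)} p^{|T_1|+\cdots+|T_r|}\,(1-p)^{N - |\bigcup_i C(T_i)|}.
\]
Inserting the joint bound $|\bigcup_i C(T_i)| \leq (\pi_r(H)+\eps)N$ and the fingerprint-size estimate from Proposition~\ref{multi}, a careful calibration (with the $p^{|T_i|}$ factor taming the combinatorial growth in the number of fingerprint tuples) shows the right-hand side is $\exp(-\Omega(pN))$ once $C$ in $p \geq C n^{-1/m_k(H)}$ is taken sufficiently large; since $pN \to \infty$ polynomially, this is $o(1)$.

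The main obstacle is the joint bound $|\bigcup_i C_i| \leq (\pi_r(H)+\eps)N$. The individual size bounds $|C_i| \leq (\pi(H)+\eps)N$, combined by a naive union bound, give only $r(\pi(H)+\eps)N$, which may exceed $N$ whenever $r\pi(H) \geq 1$ (for instance, $H = K_4^{(2)}$ with $r=2$, where $\pi(H) = 2/3$). Resolving this requires genuinely exploiting the stronger ``few copies of $H$ inside $C_i$'' hypothesis via Ramsey supersaturation, rather than the weaker size consequence. A secondary balance --- choosing $\eta$ small enough relative to $\eps$ and tuning the container fingerprint parameters so that $\log|\mathcal{F}|$ is dominated by the probabilistic decay $(1-\pi_r(H)-\eps)pN$ --- requires some care but follows from standard container parameter choices.
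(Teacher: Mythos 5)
Your proposal is correct and follows essentially the same route as the paper: encode the problem via the hypergraph of copies of $H$ (Definition~\ref{copiesgraph}), apply the multi-colour container result (Proposition~\ref{multi}/\ref{asymmulti}), obtain the crucial joint size bound $|\bigcup_i C_i| \leq (\pi(H,\ldots,H)+\eps)\binom{n}{k}$ via Ramsey supersaturation (the paper's Theorem~\ref{epsclose}, which is exactly your ``few monochromatic copies forces small joint size'' argument), and finish with a fingerprint-weighted union bound tamed by taking $C$ large. The paper actually derives Theorem~\ref{hyprandomramsey} as a corollary of the stronger resilience result Theorem~\ref{ramres}, whose proof is precisely this argument; the only cosmetic difference is that the paper bounds the number of container edges landing in $G^{(k)}_{n,p}$ by Chernoff (needed for resilience) whereas you use the exact factor $(1-p)^{N-|\bigcup_i C(T_i)|}$, which suffices for the pure $1$-statement.
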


In~\cite{asymmramsey}, sufficient conditions are given for a corresponding $0$-statement. However, the authors further show that, for $k \geq 4$, there is a $k$-uniform hypergraph $H$ such that the threshold for $G^{(k)}_{n,p}$ to be $(H,r)$-Ramsey is \emph{not} $n^{-1/m_k(H)}$, and \emph{nor} does it correspond to the exceptional case in the graph setting of certain forests, where there is a coarse threshold due to the appearance of small subgraphs. (This $H$ is the disjoint union of a tight cycle and hypergraph triangle.)

For the asymmetric Ramsey problem, we need to suitably generalise~(\ref{2density}), in the obvious way:
for any $k$-uniform hypergraphs $H_1,H_2$ with non-empty edge sets and $m_k(H_1) \geq m_k(H_2)$, let
\begin{equation}\label{kdensity}
m_k(H_1,H_2):=\max \left \{\frac{e(H_1')}{v(H_1')-k+1/m_k(H_2)}: H_1' \subseteq H_1 \text{ and } e(H_1') \geq 1 \right \}
\end{equation}
be the \emph{asymmetric $k$-density} of $(H_1,H_2)$.
Again,
$$
m_k(H_1) \geq m_k(H_1,H_2) \geq m_k(H_2), 
$$
so, in particular, $m_k(H_1,H_2)=m_k(H_1)$ if and only if $H_1$ and $H_2$ have the same $k$-density.

Recently, Gugelmann, Nenadov, Person, Steger, {\v S}kori{\'c} and Thomas~\cite{asymmramsey} generalised the $1$-statement of Conjecture~\ref{conjkreu} to $k$-uniform hypergraphs, in the case when $H_1'=H_1$ is the unique maximiser in~(\ref{kdensity}), i.e. $H_1$ is \emph{strictly $k$-balanced with respect to $m_k(\cdot,H_2)$}.

\begin{theorem}[\cite{asymmramsey}]\label{asymmthm}
For all positive integers $r,k$ with $k \geq 2$ and $k$-uniform hypergraphs $H_1,\ldots,H_r$ with $m_k(H_1) \geq \ldots \geq m_k(H_r)$ where $H_1$ is strictly $k$-balanced with respect to $m_k(\cdot,H_2)$, there exists $C > 0$ such that
$$
\lim_{n \rightarrow \infty} \mathbb{P}\left[ G^{(k)}_{n,p} \text{ is } (H_1,\ldots,H_r)\text{-Ramsey} \right] = 1 \quad\text{ if } p > Cn^{-1/m_k(H_1,H_2)}.
$$
\end{theorem}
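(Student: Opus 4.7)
I would prove Theorem~\ref{asymmthm} by adapting to $k$-uniform hypergraphs the container approach of Gugelmann et al.~\cite{asymmramsey} from the graph case of Conjecture~\ref{conjkreu}; this strategy mirrors the Nenadov--Steger~\cite{ns} proof of the symmetric result (Theorem~\ref{hyprandomramsey}). A routine reduction (merging colours $3,\ldots,r$ into a single class, which suffices since $m_k(H_i)\leq m_k(H_2)$ for $i\geq 2$) reduces matters to the case $r=2$; so I would show that $G:=G^{(k)}_{n,p}$ is w.h.p.\ $(H_1,H_2)$-Ramsey when $p>Cn^{-1/m_k(H_1,H_2)}$.

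The first step is to apply the hypergraph container theorem~\cite{container1} to the $e(H_2)$-uniform hypergraph on $\binom{[n]}{k}$ whose edges are the edge-sets of copies of $H_2$ in $K_n^{(k)}$. This yields a family $\mathcal{F}$ of ``fingerprints'' $F\subseteq\binom{[n]}{k}$ of size $|F|\leq s:=O(n^{k-1/m_k(H_2)})$, together with a container $C(F)\supseteq F$ for each $F\in\mathcal{F}$ containing at most $\delta n^{v(H_2)}$ copies of $H_2$ (for any chosen small $\delta>0$), and such that every $H_2$-free subset of $\binom{[n]}{k}$ contains some $F\in\mathcal{F}$ and is contained in $C(F)$. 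Given a bad $2$-colouring $(E_1,E_2)$ of $G$, the $H_2$-free class $E_2$ satisfies $F\subseteq E_2\subseteq C(F)$ for some $F$, so $E(G)\setminus C(F)\subseteq E_1$ must be $H_1$-free. By edge-independence in $G^{(k)}_{n,p}$ it therefore suffices to prove
\[
\sum_{F\in\mathcal{F}} p^{|F|}\cdot \mathbb{P}\bigl[G^{(k)}_{n,p}\cap\bigl(\tbinom{[n]}{k}\setminus C(F)\bigr)\text{ is }H_1\text{-free}\bigr] = o(1).
\]

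The next step is a supersaturation-type counting lemma: for every container $C=C(F)$, the number $\Phi(H_1,C)$ of $H_1$-copies entirely outside $C$ is at least $c\, n^{v(H_1)}$ for some constant $c=c(\delta,H_1,H_2)>0$, using the bound on the number of $H_2$-copies in $C$. Janson's inequality then bounds the inner probability by $\exp(-\Omega(\mu^2/(\mu+\Delta)))$ where $\mu\geq c\,n^{v(H_1)}p^{e(H_1)}$ and the overlap term satisfies $\Delta\lesssim\max_{H_1''\subsetneq H_1, e(H_1'')\geq 1}n^{2v(H_1)-v(H_1'')}p^{2e(H_1)-e(H_1'')}$. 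The strict $k$-balancedness of $H_1$ with respect to $m_k(\cdot,H_2)$ enters precisely here: via definition~\eqref{kdensity}, strict balancedness means $v(H_1'')-e(H_1'')/m_k(H_1,H_2)>k-1/m_k(H_2)$ for every proper subgraph $H_1''\subsetneq H_1$ with an edge, which ensures that $\mu^2/(\mu+\Delta)\gtrsim n^{v(H_1)-e(H_1)/m_k(H_1,H_2)}\asymp s$. Taking the constant $C$ in $p$ large enough makes this exponent dominate both $p^{|F|}$ and $|\mathcal{F}|\leq\exp(O(s\log n))$, yielding the desired $o(1)$ bound.

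The principal obstacle is establishing the supersaturation lemma lower-bounding $\Phi(H_1,C)$: one cannot rely on a density bound such as $|C|\leq(1-\varepsilon)\binom{n}{k}$ (since $\pi(H_2)$ may be close to $1$), so the argument must exploit the sparsity of $H_2$-copies inside $C$ together with a weight-transfer or Kruskal--Katona-type counting inequality that charges each $H_1$-copy meeting $C$ to several $H_2$-copies inside $C$. Once this structural input is in hand, the rest of the proof is a direct assembly of the container theorem, Janson's inequality, and the arithmetic of~\eqref{kdensity}.
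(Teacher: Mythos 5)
The paper does not prove this theorem: it is quoted from Gugelmann, Nenadov, Person, Steger, \v{S}kori\'{c} and Thomas~\cite{asymmramsey}, so there is no proof in the paper for you to match. What the paper \emph{does} prove in this direction is Corollary~\ref{asymhyprandomramsey} (via Theorem~\ref{ramres}), which drops the strict $k$-balancedness hypothesis at the cost of requiring $p > Cn^{-1/m_k(H_1)}$; this coincides with $n^{-1/m_k(H_1,H_2)}$ precisely when $m_k(H_1)=m_k(H_2)$ and is strictly weaker otherwise. The paper's argument applies the container theorem for tuples of disjoint independent sets (Proposition~\ref{asymmulti}), the coloured supersaturation lemma (Theorem~\ref{epsclose}) and a Chernoff bound; Janson's inequality and balancedness never enter, which is why that hypothesis can be discarded.

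Your sketch is in the spirit of the cited paper~\cite{asymmramsey}: containers for $H_2$-copies, the observation that for a bad colouring the complement of the relevant container must be $H_1$-free, a first-moment bound over fingerprints, a supersaturation lower bound on $H_1$-copies outside the container (which is supplied directly by Theorem~\ref{epsclose} applied to $K^{(k)}_n$ with $r=2$ --- no Kruskal--Katona-type argument is needed), and Janson's inequality. However, the quantitative balance does not close in precisely the genuinely asymmetric case $m_k(H_1,H_2)>m_k(H_2)$, which is the only case not already handled by the paper's Corollary~\ref{asymhyprandomramsey}. Applying containers at the $H_2$-scale gives fingerprints of size $|F|\leq s=\Theta(n^{k-1/m_k(H_2)})$, but $p\binom{n}{k}\asymp Cn^{k-1/m_k(H_1,H_2)}$ is then polynomially larger than $s$, so the weighted fingerprint count satisfies $\sum_{t\leq s}\binom{\binom{n}{k}}{t}p^t=\exp(\Theta(s\log n))$. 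On the other hand, writing $\mu$ for the expected number of $H_1$-copies outside the container, the unique-maximiser property gives $\mu\asymp C^{e(H_1)}n^{k-1/m_k(H_2)}\asymp s$, and strict balancedness makes $\mu^2/\Delta$ polynomially larger than $s$ for every proper subgraph contribution, so the Janson exponent is $\Theta(\min(\mu,\mu^2/\Delta))=\Theta(\mu)=\Theta(s)$; increasing $C$ only improves this by a constant factor. Hence $\exp(-\Theta(s))$ cannot dominate $\exp(\Theta(s\log n))$, and the argument as written fails. The actual proof in~\cite{asymmramsey} must work harder to remove the logarithmic loss; your sketch only recovers the case $m_k(H_1,H_2)=m_k(H_2)$, which is the case the paper already treats by a simpler and more general route.
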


They further prove a version of Theorem~\ref{asymmthm} with the weaker bound $p > Cn^{-1/m_k(H_1,H_2)}\log n$ when $H_1$ is not required to be strictly $k$-balanced with respect to $m_k(\cdot,H_2)$.

\subsubsection{New resilience result}\label{supersub3}

Our main result here is Theorem~\ref{ramres}, which generalises, fully and partially, all of the $1$-statements of the results discussed in this section, giving a unified setting for both the random Ramsey theorem and the random Tur\'an theorem.
Once we have obtained a container theorem for Ramsey graphs (Theorem~\ref{ramseycont}), the proof is short (see Section~\ref{randomsec}).

For $k$-uniform hypergraphs $H_1,\ldots,H_r$ and a positive integer $n$, let $\ex^r(n;H_1,\ldots,H_r)$ be the maximum size of an $n$-vertex $k$-uniform hypergraph $G$ which is not $(H_1,\ldots,H_r)$-Ramsey.
Define the \emph{$r$-coloured Tur\'an density}
\begin{equation}\label{piintro}
\pi(H_1,\ldots,H_r) := \lim_{n \rightarrow \infty}\frac{\ex^r(n;H_1,\ldots,H_r)}{\binom{n}{k}}.
\end{equation}
Observe that $\ex^1(n;H) = \ex(n;H)$  since a hypergraph is $H$-free if and only if it is not $(H,1)$-Ramsey.
Note further that $\pi(\cdot,\ldots,\cdot)$ generalises $\pi(\cdot)$. So when $k=2$, we have $\pi(H)=1-\frac{1}{\chi(H)-1}$. 
We will observe in Section~\ref{maxsec} that the limit in~(\ref{piintro}) does indeed exist, so $\pi(\cdot,\ldots,\cdot)$ is well-defined.
Further, crucially for $k$-uniform hypergraphs $H_1,\ldots,H_r$, there exists an $\eps=\eps(H_1,\dots,H_r)>0$ so that $\pi(H_1,\dots,H_r)<1-\eps$ (see (\ref{pibound}) in Section~\ref{maxsec}).

\begin{theorem}[Resilience for random Ramsey]\label{ramres}
Let $\delta>0$, let $r,k$ be positive integers with $k \geq 2$ and let $H_1,\ldots,H_r$ be $k$-uniform hypergraphs each with maximum vertex degree at least two, and such that $m_k(H_1) \geq \ldots \geq m_k(H_r)$. There exists $C > 0$ such that
$$
\lim_{n \rightarrow \infty} \mathbb{P}\left[ \frac{\res\left(G^{(k)}_{n,p} ,(H_1,\ldots,H_r)\text{-Ramsey}\right)}{e\left(G^{(k)}_{n,p}\right)} = 1-\pi(H_1,\ldots,H_r) \pm \delta\right] = 1\quad\text{if } p > Cn^{-1/m_k(H_1)}.
$$
\end{theorem}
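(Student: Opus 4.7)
The plan is to prove separately the two inequalities
$$(1-\pi-\delta)e(G)\;\leq\;\res\bigl(G,(H_1,\ldots,H_r)\text{-Ramsey}\bigr)\;\leq\;(1-\pi+\delta)e(G)$$
with high probability, where $G := G^{(k)}_{n,p}$ and $\pi := \pi(H_1,\ldots,H_r)$. A Chernoff bound gives $e(G) = (1\pm o(1))p\binom{n}{k}$ w.h.p., which lets us freely interchange these normalisations throughout.

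The upper bound is immediate from the definition of $\pi$. Let $F_n$ be an $n$-vertex $k$-uniform hypergraph on $[n]$ with $e(F_n) = \ex^r(n;H_1,\ldots,H_r) = (\pi + o(1))\binom{n}{k}$ that is not $(H_1,\ldots,H_r)$-Ramsey. Every subhypergraph of $F_n$ inherits this property, so $F_n \cap G$ is a non-$(H_1,\ldots,H_r)$-Ramsey subhypergraph of $G$, and Chernoff applied to $F_n\cap G$ gives $e(F_n \cap G) \geq (1-\delta/4)p\,e(F_n) \geq (\pi - \delta/2)e(G)$ w.h.p. Deleting the at most $(1-\pi+\delta)e(G)$ remaining edges of $G$ destroys the Ramsey property, yielding the upper bound on resilience.

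The lower bound is the main content and is driven by the container theorem for Ramsey hypergraphs (Theorem~\ref{ramseycont}). Applied with error parameter $\delta/2$, it should supply a family $\mathcal C$ of $k$-uniform hypergraphs on $[n]$ such that: (i) every non-$(H_1,\ldots,H_r)$-Ramsey hypergraph on $[n]$ is a subhypergraph of some $D \in \mathcal C$; (ii) $e(D) \leq (\pi + \delta/2)\binom{n}{k}$ for every $D \in \mathcal C$; and (iii) $\log|\mathcal C| \leq \beta\, n^{k - 1/m_k(H_1)}$ for a constant $\beta = \beta(\delta, H_1,\ldots,H_r)$. For each fixed $D\in\mathcal C$, Chernoff gives
$$\mathbb P\!\left[e(G \cap D) > (\pi + \tfrac{3\delta}{4})\, p\binom{n}{k}\right] \leq \exp(-\Omega_\delta(p n^k)).$$
Since $p > C n^{-1/m_k(H_1)}$ forces $pn^k > Cn^{k - 1/m_k(H_1)}$, choosing $C$ sufficiently large compared with $\beta$ makes the union bound over $\mathcal C$ tend to $0$. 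Hence w.h.p.~every $D \in \mathcal C$ satisfies $e(G\cap D) \leq (\pi + \delta)e(G)$, and by (i) every non-$(H_1,\ldots,H_r)$-Ramsey subhypergraph of $G$ is contained in some $G\cap D$, so has at most $(\pi + \delta)e(G)$ edges.

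The genuine obstacle is concentrated in Theorem~\ref{ramseycont}, which rests on the multi-coloured container result (Proposition~\ref{multi}) applied to an auxiliary hypergraph whose independent sets encode non-$(H_1,\ldots,H_r)$-Ramsey colourings. Given that input, the argument above is a routine Chernoff plus union-bound exercise whose only nontrivial feature is that the assumed threshold $p > C n^{-1/m_k(H_1)}$ is exactly the regime in which $pn^k$ dominates the container-count exponent $n^{k-1/m_k(H_1)}$ from (iii); this also explains why the hypothesis is phrased via $m_k(H_1)$ rather than through the asymmetric density $m_k(H_1,H_2)$ appearing in Conjecture~\ref{conjkreu} and Theorem~\ref{asymmthm}.
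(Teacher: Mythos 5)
Your upper bound argument and your identification of Theorem~\ref{ramseycont} as the key ingredient both match the paper. The lower bound, however, has a genuine gap: a naive union bound over containers does not close.

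Your item (iii) claims $\log|\mathcal C| \leq \beta\, n^{k-1/m_k(H_1)}$, but this is not what the container theorem delivers. Theorem~\ref{ramseycont}(ii) bounds the \emph{size of fingerprints} by $Dn^{k-1/m_k(H_1)}=:DN$; the number of fingerprints, and hence of containers, is then bounded only by $\sum_{s\le DN} r^s\binom{\binom{n}{k}}{s}$, whose logarithm is $\Theta(N\log n)$, not $\Theta(N)$. (The extra $\log n$ comes from $\binom{\binom{n}{k}}{DN}\approx (e n^{1/m_k(H_1)}/k!D)^{DN}$.) Plugging this into your union bound gives $\exp\bigl(\Theta(N\log n)-\gamma CN\bigr)$, which tends to $+\infty$ for any fixed constant $C$. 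So choosing $C$ large cannot rescue the computation: the union bound over containers alone fails.

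The paper's fix (and the standard one in container applications to random structures) is to union-bound over \emph{fingerprints} and to exploit the fact that a fingerprint $S$ with $e(\bigcup_i S_i)=s$ must itself be contained in $G^{(k)}_{n,p}$, an event of probability $p^s$. One bounds the probability of the bad event by
$$\sum_{s=0}^{DN} r^s\binom{\tbinom{n}{k}}{s}\, p^s\, e^{-\gamma n^k p},$$
and it is precisely the factor $p^s$ that cancels the $n^{1/m_k(H_1)}$ in $\binom{\binom{n}{k}}{s}$: with $p=\Theta(n^{-1/m_k(H_1)})$ one gets $\binom{\binom{n}{k}}{s}p^s\le (\text{const})^s$, so the sum is $e^{O(N)}e^{-\gamma C N}\to 0$ once $C$ is large. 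Without that $p^s$ factor the argument does not go through. This is the one genuinely nontrivial probabilistic step in the proof, and your proposal omits it.
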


Thus, when $p > Cn^{-1/m_k(H_1)}$, the random hypergraph $G^{(k)}_{n,p}$ is w.h.p such that \emph{every} subhypergraph $G'$ with at least a $\pi(H_1,\ldots,H_r)+\Omega(1)$ fraction of the edges is $(H_1,\ldots,H_r)$-Ramsey.
Conversely, there is a subgraph of $G^{(k)}_{n,p}$ whose edge density is slightly smaller than this which \emph{does not} have the Ramsey property.

Note that the threshold of $p>Cn^{-1/m_k(H_1)}$ in Theorem~\ref{ramres} is tight up to the multiplicative constant $C$. Indeed, consider the random hypergraph $G^{(k)}_{n,p}$ with $p \ll n^{-1/m_k(H_1)}$. Let $H_1' \subseteq H_1$ be such that $m_k(H_1)=d_k(H_1')$. Then the expected number of copies of $H_1'$ in $G^{(k)}_{n,p}$ is much smaller than the expected number of edges in $G^{(k)}_{n,p}$, so w.h.p. we can delete every copy of $H_1'$ (and therefore $H_1$) by removing $o(e(G^{(k)}_{n,p}))$ edges. So the hypergraph $G$ that remains has $(1-o(1))e(G^{(k)}_{n,p})$ edges, and is not $(H_1,\ldots,H_r)$-Ramsey because we can colour every edge of $G$ with colour 1. Then, since $G$ is $H_1$-free, there is no  copy of $H_i$ in colour $i$ in $G$.

Let us describe the importance of Theorem~\ref{ramres} (in the case $k=2$ and $H_1=\ldots=H_r=H$) in conjunction with Theorem~\ref{randomramsey}.
The $0$-statement of Theorem~\ref{randomramsey} says that a typical sparse graph, i.e.~one with density at most $cn^{2-1/m_2(H)}$, is~\emph{not} $(H,r)$-Ramsey.
On the other hand, by Theorem~\ref{ramres}, a typical dense graph, i.e.~one with density at least $Cn^{2-1/m_2(H)}$, has the Ramsey property in a sense which is \emph{as strong as possible} with respect to subgraphs: every sufficiently dense subgraph is $(H,r)$-Ramsey, and this minimum density is the largest we could hope to require.

The relationship between Theorem~\ref{ramres} and the previous results stated in this section can be summarised as follows:


\begin{itemize}
\item The $1$-statement of Theorem~\ref{randomturan} is recovered when $k=2$ and $r=1$. This follows from (\ref{resintro}) and the relation between $\pi(H)$ and $\chi(H)$.
\item In the case $k=2$ and $H_1=\ldots=H_r=H$, we obtain a stronger statement in place of the $1$-statement of Theorem~\ref{randomramsey} as described above. 
\item Theorem~\ref{ramres} proves the $1$-statement of Conjecture~\ref{conjkreu} in the case when $m_2(H_1)=m_2(H_2)$ in the same stronger sense as above.
\item The $1$-statement of Theorem~\ref{hyprandomturan} is recovered when $r=1$.
\item Theorem~\ref{ramres} implies Theorem~\ref{hyprandomramsey}, yielding a resilience version of this result.
\item Theorem~\ref{ramres} implies a version of Theorem~\ref{asymmthm} when $m_k(H_1)=m_k(H_2)$ but now $H_1$ is not required to be strictly $k$-balanced with respect to $m_k(\cdot,H_2)$.
\end{itemize}

Note that even though Theorem~\ref{ramres} implies many of the known results concerning Ramsey properties of random (hyper)graphs, often the resilience random Ramsey problem is different to the random Ramsey problem.
In particular, we have determined the threshold for the former problem, whilst we have seen above examples of (hyper)graphs $H_1,\dots, H_r$ where a lower value of $p$ still ensures that $G^{(k)}_{n,p}$ is w.h.p. 
$(H_1,\ldots,H_r)\text{-Ramsey}$.


\subsubsection{Resilience in the integers}
An important branch of Ramsey theory concerns partition properties of sets of integers. Schur's classical theorem~\cite{schur} states that if $\mathbb N$ is 
$r$-coloured there exists a monochromatic solution to $x+y=z$; later van der Waerden~\cite{vdw} showed that the same hypothesis ensures a monochromatic arithmetic progression of arbitrary length. 
More generally,  Rado's theorem~\cite{rado} characterises all those systems of homogeneous linear equations $\mathcal L$ for which every finite colouring of $\mathbb N$ yields a monochromatic solution to $\mathcal L$.

As in the graph case, there has been interest in proving random analogues of such results from arithmetic Ramsey theory. Before we describe the background of this area we will introduce some notation and definitions. Throughout we will assume that $A$ is an $\ell \times k$ integer matrix where $k \geq \ell$ of full rank $\ell$. We will let $\LL (A)$ denote the associated system of linear equations $A x=0$, noting that for brevity we will simply write $\LL$ if it is clear from the context which matrix $A$ it refers to. Let $S$ be a set of integers. If a vector $x=(x_1,\dots,x_k) \in S^k$ satisfies $Ax=0$ (i.e. it is a solution to $\LL$) and the $x_i$ are distinct we call $x$ a \emph{$k$-distinct solution} to $\LL$ in $S$. 

We call a set $S $ of integers \emph{$(\LL,r)$-free} if there exists an $r$-colouring of $S$ such that it contains no monochromatic $k$-distinct  solution to $\LL$. Otherwise we call $S$ \emph{$(\LL,r)$-Rado}.  
In the case when $r=1$, we write $\LL$-free instead of $(\LL,1)$-free.
Define $\mu(n,\LL,r)$ to be the size of the largest $(\LL,r)$-free subset of $[n]$.

 A matrix $A$ is \emph{partition regular} if for any finite colouring of $\mathbb{N}$, there is always a monochromatic solution to $\LL$. 
As mentioned above, Rado's theorem characterises all those integer matrices $A$ that are partition regular.
A matrix $A$ is \emph{irredundant} if there exists a $k$-distinct solution to $\LL$ in $\mathbb{N}$. Otherwise $A$ is \emph{redundant}. 
The study of random versions of Rado's theorem has focused on irredundant partition regular matrices. This is natural since
for every redundant $\ell \times k$ matrix $A$
there exists an irredundant $\ell '\times k'$ matrix $A'$
 for some $\ell '< \ell$ and $k' < k$ with the
same family of solutions (viewed as sets). See~\cite[Section 1]{random4} for a full explanation. 

Another class of matrices that have received attention in relation to this problem are so-called density regular matrices:
An irredundant, partition regular matrix $A$ is \emph{density regular} if any subset $F \subseteq \mathbb{N}$ with positive upper density, i.e., $$ \limsup_{n \to \infty} \frac{|F \cap [n]|}{n} >0,$$ contains a $k$-distinct solution to $\LL$.


Index the columns of $A$ by $[k]$. For a partition $W \dot{\cup} \overline{W} = [k]$ of the columns of $A$, we denote by $A_{\overline{W}}$ the matrix obtained from $A$ by restricting to the columns indexed by $\overline{W}$. Let $\rank(A_{\overline{W}})$ be the rank of $A_{\overline{W}}$, where $\rank(A_{\overline{W}})=0$ for $\overline{W}=\emptyset$. We set 
\begin{align}\label{m(A)def}
m(A):=\max_{\substack{W \dot{\cup} \overline{W} = [k] \\ |W|\geq 2}} \frac{|W|-1}{|W|-1+\rank(A_{\overline{W}})-\rank(A)}.
\end{align} 
We remark that the denominator of $m(A)$ is strictly positive provided that $A$ is irredundant and partition regular.


We now describe some random analogues of results from arithmetic Ramsey theory. Recall that $[n]_p$ denotes a set where each element $a \in [n]$ is included with probability $p$ independently of all other elements. R\"odl and Ruci\'nski~\cite{random4} showed that for irredundant partition regular matrices $A$, $m(A)$ is an important parameter for determining whether $[n]_p$ is $(\mathcal L,r)$-Rado or $(\mathcal L,r)$-free.

\begin{thm}[\cite{random4}]\label{radores0} 
For all irredundant partition regular full rank matrices $A$ and all positive integers $r\geq 2$, there exists a constant $c>0$ such that $$
\lim_{n \rightarrow \infty} \mathbb{P}\left[ [n]_p \text{ is } (\mathcal L,r)\text{-Rado} \right]=0 \quad\text{ if } p < cn^{-1/m(A)}.
$$
\end{thm}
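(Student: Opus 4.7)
Proof plan:

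The plan is to follow the approach of R\"odl and Ruci\'nski~\cite{random4}: identify the critical ``core'' subsystem of $\mathcal{L}$ whose expected count in $[n]_p$ drops below the density threshold, and then convert this sparsity into an explicit $2$-colouring via a random colouring and alteration argument.

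First I would reduce to the case $r = 2$. If $[n]_p$ admits a $2$-colouring with no monochromatic $k$-distinct solution, then the same colouring (viewed as an $r$-colouring using only two of the $r$ available colours) witnesses $(\mathcal{L}, r)$-freeness. So it suffices to work with $r = 2$.

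Next, fix a partition $(W, \overline{W})$ of $[k]$ with $|W| = w \geq 2$ attaining the maximum in~(\ref{m(A)def}), and set $r' := \rank(A_{\overline{W}})$, so $m(A) = (w-1)/(w-1+r'-\ell)$. Applying rank--nullity to the projection of the affine variety $\{x \in \mathbb{R}^k : Ax = 0\}$ onto its $W$-coordinates shows that the set of $w$-tuples $x_W \in [n]^W$ extending to some $k$-distinct solution of $\mathcal{L}$ forms a quasi-affine subset of dimension $w + r' - \ell$, containing $\Theta(n^{w+r'-\ell})$ elements; call these the \emph{cores}. The main first-moment calculation is then
\begin{align*}
\mathbb{E}\bigl[\#\{\text{cores lying in } [n]_p\}\bigr] \;=\; \Theta\bigl(n^{w+r'-\ell} p^w\bigr) \;=\; \Theta(np)\cdot \Theta\bigl(n^{w-1+r'-\ell}\,p^{w-1}\bigr),
\end{align*}
and substituting $p \leq c n^{-1/m(A)} = c n^{-(w-1+r'-\ell)/(w-1)}$ bounds the second factor by $c^{w-1}$. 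Taking $c$ small and applying Markov's inequality, the number of cores of $\mathcal{L}$ contained in $[n]_p$ is at most $\eta\cdot np$ with high probability, where $\eta > 0$ can be made arbitrarily small.

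The main obstacle is converting this paucity of cores into a valid $2$-colouring of \emph{all} of $[n]_p$. My plan is to start with a uniformly random $2$-colouring of $[n]_p$, so that the expected number of monochromatic $k$-distinct solutions is at most $2^{1-k}$ times the number of $k$-distinct solutions present in $[n]_p$, which is itself controlled by the core count above. For each monochromatic solution I would then flip the colour of a carefully chosen element of its $W$-core, selected to be ``isolated'' in the sense that the flip does not turn any other edge of the solution hypergraph monochromatic. Verifying that such isolated flip-vertices exist with high probability is the technically delicate step: it reduces to controlling the vertex-degrees and codegrees of the solution hypergraph on $[n]_p$, which in turn are controlled by applying the same first-moment estimate to coarser partitions $(W', \overline{W'})$ of $[k]$. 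The maximality of $(W, \overline{W})$ in~(\ref{m(A)def}) ensures that all these first moments are simultaneously small, yielding the desired $2$-colouring with high probability and hence establishing $(\mathcal{L}, r)$-freeness of $[n]_p$.
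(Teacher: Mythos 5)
This theorem is not proved in the paper at all --- it is cited from R\"odl and Ruci\'nski~\cite{random4}, and the paper uses it as a black box. So there is no ``paper's own proof'' to compare against; I will instead assess your proposal directly against what is known about the argument in~\cite{random4}.

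Your reduction to $r=2$ is correct, and your first-moment calculation is essentially right: with $(W,\overline{W})$ attaining the maximum in~(\ref{m(A)def}), the number of $W$-cores in $[n]$ is $\Theta(n^{w+r'-\ell})$, and substituting $p\leq cn^{-1/m(A)}$ does make the expected number of cores in $[n]_p$ be $O(c^{w-1}\,np)$. (One small imprecision: Markov's inequality alone gives only a probability bounded away from $1$, not a probability tending to $1$; you need a concentration bound, e.g.\ Janson or Chernoff for the lower tail, to turn ``expectation $\ll np$'' into a w.h.p.\ bound. This is fixable but worth noting.)

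The genuine gap is the step you flag yourself: converting core-sparsity into an actual $(\LL,2)$-free colouring of all of $[n]_p$. This is not a ``technically delicate step'' to be controlled by degrees and codegrees --- it is the main content of the entire $0$-statement, and it is where R\"odl and Ruci\'nski spend most of their effort. Your random-colouring-plus-flip scheme has two concrete problems. First, the expected number of $k$-distinct solutions in $[n]_p$ (as opposed to $W$-cores) at $p=cn^{-1/m(A)}$ is $\Theta(n^{k-\ell}p^k)$, and when $m(A)>(k-1)/(k-\ell-1)$ this grows polynomially faster than $np$; so after a uniformly random colouring you can have far more monochromatic solutions than there are elements of $[n]_p$ available to flip, and it is not clear the repair terminates. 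Second, flips can cascade: a flip that fixes one solution can create a new monochromatic one, and a naive ``choose an isolated flip-vertex'' scheme gives no termination guarantee without a potential argument or a local-lemma/Moser--Tardos-type analysis, which you do not supply. The actual proof in~\cite{random4} is a deterministic structural argument: one shows that w.h.p.\ the hypergraph of solutions on $[n]_p$ has a sparse ``core'' structure (all $2$-densities of all subsystems are controlled simultaneously, not just the maximiser), and then one exhibits a valid colouring directly by a greedy/peeling procedure. Your observation that one needs to control \emph{all} partitions $(W',\overline{W'})$ rather than just the maximiser is correct and is indeed a key point in their proof, but the route from that control to a colouring is not the random-flip argument you sketch. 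As it stands the proposal is a plausible-sounding outline with the heart of the argument missing.
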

We remark it is important that $r\geq 2$ in Theorem~\ref{radores0}. That is, the corresponding statement for $r=1$ is not true in general.
\COMMENT{In particular if $p=n^{-(\ell-k)/k}$, then the expected number of solutions to $\LL$ in $[n]_p$ is $\Omega(n^{k-\ell} p^k)=\Omega(1)$, and so $\mathbb{P}($There are no solutions to $\LL$ in $[n]_p) \not \to 0$ as $n \to \infty$.}
Roughly speaking, Theorem~\ref{radores0} implies that almost all subsets of $[n]$ with significantly fewer than $n^{1-1/m(A)}$ elements are $(\mathcal L,r)$-free
for any irredundant partition regular matrix $A$.
The following theorem  of Friedgut, R\"odl and Schacht~\cite{frs} complements this result, implying that 
almost all subsets of $[n]$ with significantly more than $n^{1-1/m(A)}$ elements are $(\mathcal L,r)$-Rado
for any irredundant partition regular matrix $A$.

\begin{thm}[\cite{frs}]\label{r3} 
For all irredundant partition regular full rank matrices $A$ and all positive integers $r$, there exists a constant $C>0$ such that $$
\lim_{n \rightarrow \infty} \mathbb{P}\left[ [n]_p \text{ is } (\mathcal L,r)\text{-Rado}\right]=1 \quad\text{ if } p > Cn^{-1/m(A)}.
$$
\end{thm}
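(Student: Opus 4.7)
The plan is to deduce Theorem~\ref{r3} from the multi-colour container theorem (Proposition~\ref{multi}) combined with a supersaturation statement for the $(\mathcal{L},r)$-free property. Observe that a set $S \subseteq [n]$ is $(\mathcal{L},r)$-free if and only if $S$ is a union of $r$ disjoint independent sets in the $k$-uniform hypergraph $H = H_n(\mathcal{L})$ on vertex set $[n]$ whose edges are the supports of $k$-distinct solutions to $\mathcal{L}$. The target is thus to show that w.h.p.\ $[n]_p$ admits no such decomposition.

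First I would verify that $H$ satisfies the codegree boundedness conditions required to apply Proposition~\ref{multi} with threshold $\tau = \Theta(n^{-1/m(A)})$. The parameter $m(A)$ defined in~(\ref{m(A)def}) is tailor-made for this: extending a $j$-set $T \subseteq [n]$ to a $k$-distinct solution amounts to choosing the remaining coordinates subject to the linear constraints imposed by $A$, and the quantity $|W|-1+\rank(A_{\overline{W}})-\rank(A)$ counts the effective degrees of freedom in such an extension. A direct counting argument then yields the required bounds on $\Delta_j(H)$.

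With codegree control in hand, Proposition~\ref{multi} produces a family $\mathcal{F}$ of $r$-tuples $(C_1,\ldots,C_r)$ of subsets of $[n]$ such that every $r$-tuple of pairwise disjoint independent sets of $H$ is contained in some member of $\mathcal{F}$; moreover $\log|\mathcal{F}| = O(n^{1-1/m(A)} \log n)$ and each $C_i$ contains only $o(e(H))$ edges of $H$. The crucial structural step is then to show that there exists $\delta = \delta(\mathcal{L},r) > 0$ such that every $(C_1,\ldots,C_r) \in \mathcal{F}$ satisfies $|C_1 \cup \cdots \cup C_r| \leq (1-\delta)n$. This in turn reduces to an arithmetic supersaturation statement: whenever $S \subseteq [n]$ has $|S| > (1-\delta)n$, every $r$-colouring of $S$ already produces $\Omega(n^{k-\ell})$ monochromatic $k$-distinct solutions. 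The qualitative fact that $\mu(n,\mathcal{L},r) \leq (1-\delta)n$ for all large $n$ follows from Rado's theorem together with a compactness/embedding argument (an AP or dense block inside any subset of density close to $1$ in $[n]$ must witness the Rado property); the quantitative $\Omega(n^{k-\ell})$ gain is then extracted by a standard averaging/removal step.

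Finally, if $[n]_p$ is $(\mathcal{L},r)$-free then $[n]_p \subseteq C_1 \cup \cdots \cup C_r$ for some $(C_1,\ldots,C_r) \in \mathcal{F}$, an event of probability at most $(1-p)^{\delta n} \leq \exp(-\delta p n)$. A union bound over $\mathcal{F}$ gives a total probability at most $\exp\bigl(O(n^{1-1/m(A)} \log n) - \delta C n^{1-1/m(A)}\bigr)$, which tends to zero provided $C$ is chosen large enough. The main obstacle is the supersaturation step: the codegree bookkeeping and the final union bound are essentially routine once $m(A)$ is unpacked, but pinning down the quantitative $(\mathcal{L},r)$-density gap for every container tuple is the one place where the arithmetic structure of $A$ really enters, and it is here that one must work to turn the qualitative consequence of Rado's theorem into a usable $\delta > 0$.
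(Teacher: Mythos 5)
Your high-level strategy — encode $(\mathcal L,r)$-free sets as $r$-tuples of disjoint independent sets in the hypergraph of $k$-distinct solutions, apply the multi-colour container theorem, invoke a supersaturation bound to show that each container tuple misses a $\delta n$ fraction of $[n]$, and finish with a union bound — is exactly the route the paper takes (the paper proves the stronger resilience statement Theorem~\ref{radores1}, which implies Theorem~\ref{r3} together with the Frankl--Graham--R\"odl supersaturation bound $\mu(n,\LL,r)\leq(1-\eta)n$). Your codegree analysis via $m(A)$ and your supersaturation step (extract the $\Omega(n^{k-\ell})$ gain from the $(1-\delta)n$ density threshold via a removal-type argument) correspond to Proposition~\ref{matmL}, Corollary~\ref{matupper2} and Lemma~\ref{matsupersat}.

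However, there is a genuine gap in the final union bound, and it is exactly the step that makes the container method work in the sparse random setting. You estimate the number of containers by $|\mathcal F|\leq\exp\bigl(O(n^{1-1/m(A)}\log n)\bigr)$ and bound the probability that $[n]_p$ lies inside a fixed container by $\exp(-\delta pn)$, arriving at $\exp\bigl(O(n^{1-1/m(A)}\log n)-\delta C n^{1-1/m(A)}\bigr)$. For any \emph{fixed} constant $C$ this does \emph{not} tend to zero: the $\log n$ factor in the first term eventually dominates. The fix is to use the fingerprints, not just the containers. If $[n]_p$ decomposes as $(I_1,\dots,I_r)\in\mathcal I(n,\LL,\dots,\LL)$ with fingerprint $S=(S_1,\dots,S_r)=g(I_1,\dots,I_r)$ of total size $s:=|\cup_i S_i|$, then $S\subseteq(I_1,\dots,I_r)$ forces $\cup_i S_i\subseteq[n]_p$, an event of probability $p^s$. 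The union bound then becomes
$$\sum_{s=0}^{Dn^{1-1/m(A)}} r^s\binom{n}{s}\,p^s\,e^{-\gamma pn},$$
and the extra factor $p^s$ converts the $\binom{n}{s}\approx(en/s)^s$ term into roughly $(repn/s)^s$, whose logarithm is $O\bigl(n^{1-1/m(A)}\log C\bigr)$ rather than $O\bigl(n^{1-1/m(A)}\log n\bigr)$. Since $\log C = o(C)$, this is now beaten by $\gamma C n^{1-1/m(A)}$ once $C$ is large. So while the framework you describe is correct, the probability accounting must be done at the fingerprint level; without the $p^s$ factor the argument does not close.
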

Earlier, Theorem~\ref{r3} was confirmed by Graham, R\"odl and Ruci\'nski~\cite{grr} in the case where $\LL$ is $x+y=z$ and $r=2$, and then by R\"odl and Ruci\'nski~\cite{random4} in the case when $A$ is density regular.

Together Theorems~\ref{radores0} and~\ref{r3} show that the threshold for the property of being $(\LL,r)$-Rado is $p=n^{-1/m(A)}$. In light of this, it is interesting to ask if above this threshold 
the property of being $(\LL,r)$-Rado is resilient to the deletion of a significant number of elements. To be precise, given a set $S$, we define the \emph{resilience of $S$ with respect to $\mathcal{P}$}, $\res(S,\mathcal{P})$, to be the minimum number $t$ such that by deleting $t$ elements from $S$, one can obtain a set not having $\mathcal{P}$. For example, when $\mathcal{P}$ is the property of containing an arithmetic progression of length $k$, then Szemer\'edi's theorem can be phrased in terms of resilience; it states that for all $k \geq 3$ and $\eps>0$, there exists $n_0>0$ such that for all integers $n\geq n_0$, we have $\res([n],\mathcal{P}) \geq (1-\eps)n$.

The following result of Schacht~\cite{schacht} provides a resilience strengthening of Theorem~\ref{r3} in the case of density regular matrices.

\begin{thm}[\cite{schacht}]\label{r1} 
For all irredundant density regular full rank matrices $A$, all positive integers $r$ and all $\eps >0$, there exists a constant $C>0$ such that $$
\lim_{n \rightarrow \infty} \mathbb{P}\left[ \frac{\res([n]_p,(\LL,r)\text{-Rado})}{|[n]_p|} \geq 1-\eps \right] = 1 \quad\text{ if } p > Cn^{-1/m(A)}.
$$ 
\end{thm}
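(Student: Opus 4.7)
The plan is to deploy the multi-container approach for tuples of disjoint independent sets in hypergraphs, combined with the density regularity of $A$ via supersaturation, followed by a Chernoff union bound. First, I would introduce the auxiliary $k$-uniform hypergraph $H$ on vertex set $[n]$ whose edges are the $k$-subsets of $[n]$ that support a $k$-distinct solution to $\LL = \LL(A)$. By construction, $T \subseteq [n]$ is $(\LL,r)$-free precisely when it admits a partition $T = T_1 \sqcup \cdots \sqcup T_r$ with each $T_i$ independent in $H$. Density regularity of $A$, together with iterated pigeonhole on colour classes, ensures that $\mu(n,\LL,r) = o(n)$.

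Next, I would apply the multi-container theorem (the announced Proposition~\ref{multi}) to $H$, with the co-degree inputs chosen so as to match the parameter $m(A)$. This produces a family $\F$ of $r$-tuples $(C_1,\ldots,C_r)$ of subsets of $[n]$ enjoying: (a) every $r$-tuple of disjoint independent sets $(T_1,\ldots,T_r)$ in $H$ satisfies $T_i \subseteq C_i$ for some $(C_1,\ldots,C_r) \in \F$; (b) $|\F| \leq 2^{O(n^{1-1/m(A)})}$; (c) each $C_i$ is \emph{almost} independent in $H$, containing only $o(n^k)$ $k$-distinct solutions to $\LL$. Combining (c) with a supersaturation argument driven by density regularity of $A$ forces $|C_i| = o(n)$ for every $i$, so that $|C_1 \cup \cdots \cup C_r| \leq (\eps/2) n$ for all sufficiently large $n$.

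Finally, I would run the union bound. The event $\res([n]_p,(\LL,r)\text{-Rado})/|[n]_p| < 1-\eps$ is equivalent to the existence of an $(\LL,r)$-free subset $T \subseteq [n]_p$ of size at least $\eps|[n]_p|$. By (a), any such $T$ lies inside $[n]_p \cap (C_1 \cup \cdots \cup C_r)$ for some container $(C_1,\ldots,C_r) \in \F$. Chernoff gives $|[n]_p| = (1+o(1))pn$ with probability $1 - e^{-\Omega(pn)}$, and for each fixed container we have $\mathbb{E}|[n]_p \cap (C_1 \cup \cdots \cup C_r)| \leq (\eps/2) pn$, so the probability that $|[n]_p \cap (C_1 \cup \cdots \cup C_r)| \geq (3\eps/4) pn$ is at most $e^{-\Omega(pn)}$. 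Summing over $\F$, the total failure probability is bounded by $2^{O(n^{1-1/m(A)})} \cdot e^{-\Omega(pn)}$, which tends to $0$ once $p > Cn^{-1/m(A)}$ for $C$ sufficiently large.

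The main obstacle is the interplay between the container bound~(b) and the combinatorial parameter $m(A)$: one must verify that the co-degree function of $H$, which arises from counting solutions to restricted systems $A_{\overline{W}} x = 0$, feeds exactly into the exponent $n^{1-1/m(A)}$ governing $|\F|$, as encoded by the maximisation in~(\ref{m(A)def}). The supersaturation step in~(c) is standard in spirit (variants appear in proofs of Theorem~\ref{r3}), but here it must be \emph{quantitatively} tight enough that each container is $o(n)$ rather than merely a constant fraction of $[n]$; this is the place where density regularity (as opposed to mere partition regularity) is essential, and it is what forces the restriction to density regular matrices in the hypothesis.
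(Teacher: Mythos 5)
Your overall strategy mirrors the paper's: the paper derives Theorem~\ref{r1} as a corollary of the more general Theorem~\ref{radores1new}, itself proved via a container theorem (Theorem~\ref{matcontainer}) for tuples of disjoint $\LL$-free sets followed by a Chernoff/union-bound argument, together with the observation (from~\cite[Theorem~2]{fgr}) that density regularity forces $\mu(n,\LL,r)=o(n)$. Your proposal recreates exactly this pipeline, only specialised to density regular $A$; in particular your step of bounding each $|C_i|=o(n)$ via single-colour supersaturation is a legitimate shortcut here, where the paper instead bounds $|C_1\cup\cdots\cup C_r|$ via the $r$-coloured supersaturation Lemma~\ref{matsupersat} so as to also handle the general partition regular case.

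There is, however, a genuine gap in your union bound. The estimate $|\F|\leq 2^{O(n^{1-1/m(A)})}$ is incorrect: the fingerprints $(S_1,\ldots,S_r)$ produced by Proposition~\ref{multi} satisfy $\sum_i|S_i|\leq Dn^{1-1/m(A)}$, so the number of fingerprints (and hence containers) is of order $\sum_{s\leq Dn^{1-1/m(A)}}r^s\binom{n}{s}=2^{\Theta(n^{1-1/m(A)}\log n)}$, with an unavoidable $\log n$ in the exponent. Since $pn=Cn^{1-1/m(A)}$ when $p=Cn^{-1/m(A)}$, the product $2^{O(n^{1-1/m(A)}\log n)}\cdot e^{-\Omega(pn)}$ tends to infinity, not zero, for any fixed $C$, so the direct union bound over containers fails. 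The standard repair, and what the paper does in the proof of Theorem~\ref{radores1}, is to observe that a witnessing $(\LL,r)$-free set $T\subseteq[n]_p$ not only lies in $\bigcup_iC_i$ but also \emph{contains} the fingerprint $\bigcup_iS_i$ (since $S\subseteq(T_1,\ldots,T_r)\subseteq f(S)$), so the failure event for a given fingerprint also requires $\bigcup_iS_i\subseteq[n]_p$, contributing an extra factor $p^{|\bigcup_iS_i|}$. Grouping by $s:=|\bigcup_iS_i|$ and using $\binom{n}{s}p^s\leq(enp/s)^s$, the weighted sum $\sum_{s\leq DN}r^s\binom{n}{s}p^s\,e^{-\gamma pn}$ (with $N:=n^{1-1/m(A)}$) is at most $(DN+1)(erC/D)^{DN}e^{-\gamma CN}$, which does tend to zero once $C$ is large relative to $D$ and $1/\gamma$. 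Without the $p^s$ factor your argument does not close. (A minor point: the edge count of $H$ is $\Theta(n^{k-\ell})$, so ``almost independent'' in your item (c) should mean $o(n^{k-\ell})$, not $o(n^k)$, $k$-distinct solutions.)
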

Note that in~\cite{schacht} the result is stated in the $r=1$ case only, but the general result follows immediately from this special case.
	
Our next result gives a resilience strengthening of Theorem~\ref{r3} for all irredundant partition regular matrices. 
\begin{thm}\label{radores1new}
For all irredundant partition regular full rank matrices $A$, all positive integers $r$ and all $\delta >0$, there exists a constant $C>0$ such that
$$
\lim_{n \rightarrow \infty} \mathbb{P}\left[ \frac{\res([n]_p,(\LL,r)\text{-Rado})}{|[n]_p|}= 1-\frac{\mu(n,\LL,r)}{n}\pm \delta   \right] = 1 \quad\text{ if } p > Cn^{-1/m(A)}.
$$ 
\end{thm}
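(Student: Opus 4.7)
The plan is to prove, w.h.p., that the resilience fraction equals $1 - \mu(n,\LL,r)/n \pm \delta$ by bounding it from above and below separately. For the \emph{lower bound}, let $S^* \subseteq [n]$ be an $(\LL,r)$-free set of maximum size $\mu(n,\LL,r)$, witnessed by some $r$-colouring $\sigma$. Its restriction $S^* \cap [n]_p$ is still $(\LL,r)$-free under the induced colouring, so $\res([n]_p,(\LL,r)\text{-Rado}) \leq |[n]_p| - |S^* \cap [n]_p|$. A Chernoff bound applied to the indicators $\mathbf{1}[a \in [n]_p]$ shows that $|S^* \cap [n]_p|$ and $|[n]_p|$ both lie within a $(1 \pm \delta/4)$ factor of their means w.h.p., which yields the desired upper bound $1 - \mu(n,\LL,r)/n + \delta$ on the resilience fraction.

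For the \emph{upper bound} on the resilience fraction, I would associate to $\LL$ the $k$-uniform hypergraph $H_\LL$ on vertex set $[n]$ whose edges are the supports of $k$-distinct solutions to $Ax = 0$. By definition, an $r$-tuple of disjoint independent sets in $H_\LL$ is exactly the colour class decomposition of an $(\LL,r)$-free set. Applying the multi-coloured container result Proposition~\ref{multi} to $H_\LL$ produces a family $\mathcal{F}$ of $r$-tuples $(C_1,\ldots,C_r)$ with $|\mathcal{F}| \leq \exp(\eta pn)$, such that every $(\LL,r)$-free $S \subseteq [n]$ with colouring $(S_1,\ldots,S_r)$ has $S_i \subseteq C_i$ for some container, and each $C_i$ contains only an $\eta$-fraction of the edges of $H_\LL$, where $\eta \ll \delta$. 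The crucial scaling calculation is that the co-solution counts of $H_\LL$ match the exponent $1/m(A)$ from~(\ref{m(A)def}), so the container theorem activates precisely at $p > C n^{-1/m(A)}$.

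Next I would invoke a supersaturation (removal-type) statement for $(\LL,r)$-free sets: any tuple $(C_1,\ldots,C_r)$ in which each $C_i$ contains fewer than an $\eta$-fraction of the edges of $H_\LL$ can have at most $o(n)$ vertices deleted from each $C_i$ to leave sets whose union is genuinely $(\LL,r)$-free, and hence $|C_1 \cup \ldots \cup C_r| \leq \mu(n,\LL,r) + (\delta/3)n$. For each fixed container, $|\bigcup_i C_i \cap [n]_p|$ concentrates around $p|\bigcup_i C_i|$ by Chernoff, and since any $(\LL,r)$-free $S \subseteq [n]_p$ captured by this container has $|S| \leq |\bigcup_i C_i \cap [n]_p|$, a union bound over the sub-exponentially large family $\mathcal{F}$ implies that w.h.p.\ no $(\LL,r)$-free subset of $[n]_p$ exceeds $(\mu(n,\LL,r)/n + \delta)|[n]_p|$, giving the matching lower bound on the resilience fraction.

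The \emph{main obstacle} will be the colourful supersaturation step: showing that almost-$(\LL,r)$-free container tuples cannot be much larger than $\mu(n,\LL,r)$. The natural route is a removal-type lemma for systems of linear equations---in the spirit of the arithmetic removal lemma of Král', Serra, and Vena---combined with the definition of $\mu(n,\LL,r)$. A secondary, more routine, obstacle is verifying that $H_\LL$ meets the boundedness hypothesis of Proposition~\ref{multi} at the correct scale $\tau \sim n^{-1/m(A)}$; this reduces to a counting computation estimating the number of extensions of a partial solution to $Ax = 0$ using the rank condition on $A_{\overline{W}}$ appearing in~(\ref{m(A)def}), and is precisely the analogue of the standard $m_k(H)$-computation in the hypergraph setting.
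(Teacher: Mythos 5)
Your overall plan mirrors the paper's: an easy direction via intersecting a maximum $(\LL,r)$-free set with $[n]_p$ and applying Chernoff, and a hard direction via multi-colour containers for the hypergraph of $k$-distinct solutions, plus supersaturation via the Kr\'al'--Serra--Vena removal lemma, plus a probabilistic union bound. The boundedness calculation for $\Delta_\ell$ using the rank conditions is also the same. However, there is one genuine gap in the hard direction, at the threshold $p = \Theta(n^{-1/m(A)})$.

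The claim that $|\mathcal{F}| \leq \exp(\eta pn)$ is false. The containers produced by Proposition~\ref{multi} are indexed by fingerprints $(S_1,\dots,S_r)$ with $\sum|S_i| \leq Dpn$; the number of such tuples is on the order of $\binom{n}{Dpn}r^{Dpn} \approx \exp\bigl(\Theta(pn\log(1/p))\bigr) = \exp\bigl(\Theta(pn\log n/m(A))\bigr)$, which carries an extra $\log n$ factor in the exponent. Against the Chernoff failure probability $\exp(-\Omega(\delta^2 pn))$, your naive union bound over $\mathcal F$ produces $\exp\bigl(\Theta(pn\log n) - \Omega(pn)\bigr) \to \infty$ when $p = C n^{-1/m(A)}$. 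As stated, your argument would only close for $p \gg n^{-1/m(A)}\log n$, which misses the optimal threshold.

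The fix, which the paper uses, is to exploit the fact that the fingerprint $\cup_i S_i$ is contained in the witnessing $(\LL,r)$-free set and hence must lie inside $[n]_p$; this event has probability $p^s$ where $s = |\cup_i S_i|$. Grouping the union bound by fingerprint size and inserting this $p^s$ factor, the sum $\sum_{s\leq Dpn} r^s\binom{n}{s}\, p^s\, e^{-\gamma np}$ has its $\binom{n}{s}p^s$ factors collapse to roughly $(e/D)^s$, which is only exponential in $s$ (no $\log n$), and then $e^{-\gamma np}$ dominates once $C$ is large relative to $D$. Without this conditioning on the fingerprint, the argument does not close at the claimed threshold.
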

It is well known that for all irredundant partition regular full rank matrices $A$ and all positive integers $r$, there exist $n_0=n_0(A,r),\eta=\eta(A,r)>0,$ such that for all integers $n\geq n_0$, we have $\mu(n,\LL,r) \leq (1-\eta)n$. (This follows  from a supersaturation lemma of Frankl, Graham and R\"odl~\cite[Theorem 1]{fgr}.) Thus, Theorem~\ref{radores1new} does imply Theorem~\ref{r3}.
Further, in the case when $A$ is density regular, \cite[Theorem 2]{fgr} immediately implies  that $\mu(n,\LL,r)=o(n)$ for any fixed $r\in \mathbb N$.
Thus Theorem~\ref{radores1new} implies Theorem~\ref{r1}. Theorem~\ref{radores1new} in the case when $r=1$ and $\LL$ is $x+y=z$ was proved by Schacht~\cite{schacht}. In fact, the method of Schacht can be used to prove the theorem  for $r=1$ and every irredundant partition regular matrix $A$.\COMMENT{ I'm pretty certain that this is true; just supersat and Prop~\ref{matmL} needed.}



Intuitively, the reader can interpret Theorem~\ref{radores1new} as stating that almost all subsets of $[n]$ with significantly more than $n^{1-1/m(A)}$ elements  strongly possess the property of being $(\mathcal L,r)$-Rado
for any irredundant partition regular matrix $A$. The `strength' here depends on the parameter  $\mu(n,\LL,r)$. In light of this it is natural to seek good bounds on $\mu(n,\LL,r)$ (particularly in the cases when $\mu(n,\LL,r)=\Omega(n)$). In general,
not too much is known about this parameter. However, as mentioned earlier, in the case when $A=(1,1,-1)$ (i.e. $\mathcal L$ is $x+y=z$), this is (essentially) a 40-year-old problem of Abbott and Wang~\cite{aw}. 
In Section~\ref{secaw} we give an upper bound on $\mu(n,\LL,r)$ in this case for all $r\in \mathbb N$.

Instead of proving Theorem~\ref{radores1new} directly, in Section~\ref{sec4} we will prove a version of the result that holds for a more general class of matrices $A$, and also deals with the asymmetric case, namely Theorem~\ref{radores1}.

{\bf Additional note.} Just before submitting the paper we were made aware of simultaneous and independent work of Spiegel~\cite{sp}. In~\cite{sp} the  case $r=1$ of Theorem~\ref{radores1} is proven.  Spiegel also used the container method
to give an alternative proof of Theorem~\ref{r3}.


\subsection{Enumeration questions for Ramsey problems}\label{countsec}
A fundamental question in combinatorics is to determine the number of structures with a given property. For example, Erd\H{o}s, Frankl and R\"odl~\cite{efr} proved that the number of $n$-vertex $H$-free graphs is $2^{\binom{n}{2}(1-\frac{1}{r-1}+o(1))}$
for any graph $H$ of chromatic number $r$. Here the lower bound follows by considering all the subgraphs of the $(r-1)$-partite Tur\'an graph. 
There has also been interest in strengthening this result e.g. in the case when $H$ is bipartite; see e.g.~\cite{fms,ms}.
Given any $k,r,n \in \mathbb N$ with $k \geq 2$ and $k$-uniform hypergraphs $H_1,\dots, H_r$, define 
${\mathrm{Ram}}(n;H_1,\ldots,H_r)$ to be the collection of all $k$-uniform hypergraphs on vertex set $[n]$ that are $(H_1,\ldots,H_r)$-Ramsey and $\overline{\mathrm{Ram}}(n;H_1,\ldots,H_r)$ to be all those 
$k$-uniform hypergraphs on $[n]$ that are not $(H_1,\ldots,H_r)$-Ramsey.
A natural question is to determine the size of ${\mathrm{Ram}}(n;H_1,\ldots,H_r)$. Surprisingly, we are unaware of any \emph{explicit} results in this direction for $r\geq 2$.
The next application of the container method fully answers this question up to an error term in the exponent. 
\begin{theorem}\label{ramseycount}
Let $k,r,n \in \mathbb{N}$ with $k \geq 2$ and $H_1,\ldots,H_r$ be $k$-uniform hypergraphs.
Then
$$
|\overline{\mathrm{Ram}}(n;H_1,\ldots,H_r)| = 2^{\ex^r(n,H_1,\ldots,H_r)+o(n^k)} = 2^{\pi(H_1,\ldots,H_r)\binom{n}{k}+o(n^k)}.
$$
\end{theorem}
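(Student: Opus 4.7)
\textbf{Proof plan for Theorem~\ref{ramseycount}.}

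The lower bound is immediate and matches the second equality. Let $G^{*}$ be an $n$-vertex $k$-uniform hypergraph with $e(G^{*}) = \ex^{r}(n;H_{1},\ldots,H_{r})$ that is not $(H_{1},\ldots,H_{r})$-Ramsey; since the property of being non-Ramsey is closed under edge deletion, every subhypergraph of $G^{*}$ lies in $\overline{\mathrm{Ram}}(n;H_{1},\ldots,H_{r})$. This gives $|\overline{\mathrm{Ram}}(n;H_{1},\ldots,H_{r})| \geq 2^{\ex^{r}(n;H_{1},\ldots,H_{r})}$, which by the definition of $\pi(H_{1},\ldots,H_{r})$ in~(\ref{piintro}) is $2^{\pi(H_{1},\ldots,H_{r})\binom{n}{k}+o(n^{k})}$.

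For the upper bound, the key is to encode each non-Ramsey hypergraph $G$ on $[n]$ as a tuple of $r$ edge-disjoint subhypergraphs. By definition, $G \in \overline{\mathrm{Ram}}(n;H_{1},\ldots,H_{r})$ exactly when there is an $r$-colouring $\sigma\colon E(G)\to[r]$ such that the colour class $G_{i} := \sigma^{-1}(i)$ is $H_{i}$-free for every $i \in [r]$. So the map $G \mapsto (G_{1},\ldots,G_{r})$ realises $G$ as a union of an $r$-tuple of edge-disjoint hypergraphs where each $G_{i}$ avoids $H_{i}$. Such tuples are precisely the objects counted by the multi-coloured container theorem for Ramsey hypergraphs (Theorem~\ref{ramseycont}), which is the natural adaptation of Proposition~\ref{multi} to the auxiliary hypergraph whose vertex set is $\binom{[n]}{k}\times[r]$ and whose edges encode monochromatic copies of $H_{i}$ in colour $i$.

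I would then apply that container theorem to obtain a collection $\mathcal{F}$ of at most $2^{o(n^{k})}$ container tuples $(C_{1},\ldots,C_{r})$ with $C_{i}\subseteq\binom{[n]}{k}$ such that (i) for every non-Ramsey $G$ with witness colouring $(G_{1},\ldots,G_{r})$, there exists some $(C_{1},\ldots,C_{r})\in\mathcal{F}$ with $G_{i}\subseteq C_{i}$ for all $i$, and (ii) the union $C_{1}\cup\cdots\cup C_{r}$ has size at most $\ex^{r}(n;H_{1},\ldots,H_{r})+o(n^{k})$. Property (ii) is the crucial feature: since any edge-disjoint choice $(G_{1},\ldots,G_{r})$ of $H_{i}$-free hypergraphs with $G_{i}\subseteq C_{i}$ produces a non-Ramsey hypergraph on the edge set $\bigcup_{i} G_{i}\subseteq\bigcup_{i} C_{i}$, the union cannot exceed the Tur\'an-type extremal value without producing a configuration that certifies the Ramsey property (this supersaturation step for the Ramsey problem is exactly the content needed to upgrade the container theorem from "few containers" to "few small containers"). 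Given (i) and (ii), each non-Ramsey $G$ is determined by first specifying a container tuple and then choosing a subset of $C_{1}\cup\cdots\cup C_{r}$, so
\[
|\overline{\mathrm{Ram}}(n;H_{1},\ldots,H_{r})|
\;\leq\; |\mathcal{F}|\cdot 2^{\ex^{r}(n;H_{1},\ldots,H_{r})+o(n^{k})}
\;=\; 2^{\ex^{r}(n;H_{1},\ldots,H_{r})+o(n^{k})},
\]
and the second equality in the theorem follows from~(\ref{piintro}).

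The main obstacle is not in the enumeration step itself, which is short, but in invoking the correct form of Theorem~\ref{ramseycont}: one needs a container theorem whose output guarantees the \emph{joint} bound $|C_{1}\cup\cdots\cup C_{r}|\leq\ex^{r}(n;H_{1},\ldots,H_{r})+o(n^{k})$ rather than separate bounds $|C_{i}|\leq\ex(n;H_{i})+o(n^{k})$. Establishing this requires a supersaturation statement for the $r$-coloured extremal function $\ex^{r}$, together with a careful iteration of the Balogh--Morris--Samotij/Saxton--Thomason container lemma on the auxiliary hypergraph. The existence of the limit defining $\pi(H_{1},\ldots,H_{r})$ and the bound $\pi(H_{1},\ldots,H_{r})<1-\eps$ alluded to in Section~\ref{maxsec} ensure the resulting container tuples are genuinely sparse in $\binom{[n]}{k}$, which is what makes the exponent $o(n^{k})$ loss negligible.
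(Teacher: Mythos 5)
Your proposal follows the paper's main argument closely: decompose each non-Ramsey $G$ into an $r$-tuple of edge-disjoint $H_i$-free colour classes, cover these tuples with the containers from Theorem~\ref{ramseycont}, use the joint bound $e(\bigcup_i f(S_i)) \leq \ex^r(n;H_1,\ldots,H_r) + \delta\binom{n}{k}$ (which, as you correctly observe, is the crucial feature and comes from the $r$-coloured supersaturation result, Theorem~\ref{epsclose}), and multiply by the number of fingerprints, which is $2^{o(n^k)}$ because fingerprint sizes are $O(n^{k-1/m_k(H_1)})$. The lower bound is identical. One small inaccuracy: the auxiliary structure in the paper is not a hypergraph on $\binom{[n]}{k}\times[r]$ but rather $r$ separate hypergraphs $\mathcal{H}_{n,i}$ of copies of $H_i$, all on the common vertex set $\binom{[n]}{k}$, handled by the asymmetric container result Proposition~\ref{asymmulti}; but since you treat Theorem~\ref{ramseycont} as a black box, this does not affect your enumeration step.

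There is, however, a genuine gap. Theorem~\ref{ramseycont} carries the hypothesis $\Delta_1(H_i) \geq 2$ for all $i \in [r]$, whereas Theorem~\ref{ramseycount} is asserted for \emph{all} $k$-uniform hypergraphs $H_1,\ldots,H_r$. When some $H_i$ is a matching (i.e.\ $\Delta_1(H_i)=1$), the container theorem does not apply: the hypergraph $\mathcal{H}_{n,i}$ of copies of $H_i$ fails the boundedness conditions required to run Proposition~\ref{asymmulti} with $p = n^{-1/m_k(H_1)}$. The paper handles this case separately and elementarily: if, say, $H_1$ is a matching of size $h$, then the colour-$1$ class $J'$ of any witnessing colouring is $H_1$-free and so, by the Erd\H{o}s matching bound, satisfies $e(J') \leq (h-1)\binom{n-1}{k-1} = O(n^{k-1})$, which contributes only a factor $2^{o(n^k)}$. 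Writing $G$ as the union of $J' = \sigma^{-1}(1)$ and a hypergraph $J$ that is not $(H_2,\ldots,H_r)$-Ramsey and iterating, one peels off all matching $H_i$ until the container theorem is applicable. You would need to add this case to make the proof complete.
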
%

Note that in the case when $k=2$ and $r=1$, Theorem~\ref{ramseycount} is precisely the above mentioned result of  Erd\H{o}s, Frankl and R\"odl~\cite{efr}.
In fact, one can also obtain Theorem~\ref{ramseycount} by using the work from~\cite{nrs}, a hypergraph analogue of the result in~\cite{efr}; see Section~\ref{sec54} for a proof of this.
Similar results were obtained also using containers by Falgas-Ravry, O'Connell and Uzzell in~\cite{templates}, and by Terry in~\cite{terry} who reproved a result of Ishigami~\cite{ishigami}.

\smallskip
Our final application of the container method determines, up to an error term in the exponent, the number of $(\LL,r)$-free subsets of $[n]$.

\begin{thm}\label{radonumnew}
Let $A$ be an irredundant partition regular matrix of full rank and let $r \in \mathbb N$ be fixed. There are $2^{\mu(n,\LL,r)+o(n)}$ $(\LL,r)$-free subsets of $[n]$.
\end{thm}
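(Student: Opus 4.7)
The lower bound $2^{\mu(n,\LL,r)}$ is immediate from the hereditary nature of $(\LL,r)$-freeness: if $S^*\subseteq[n]$ is $(\LL,r)$-free of size $\mu(n,\LL,r)$ with witnessing $r$-colouring $c$, then each of the $2^{\mu(n,\LL,r)}$ subsets of $S^*$ is $(\LL,r)$-free via the restriction of $c$.

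For the upper bound the plan is to combine the multi-container theorem for tuples of disjoint independent sets (Proposition~\ref{multi}) with an arithmetic supersaturation statement. I would let $\mathcal H$ be the $k$-uniform hypergraph on vertex set $[n]$ whose edges are the supports of the $k$-distinct solutions to $\LL=\LL(A)$ in $[n]$, and observe that a subset $S\subseteq[n]$ is $(\LL,r)$-free precisely when it admits a partition $S=I_1\cup\cdots\cup I_r$ into independent sets of $\mathcal H$. Picking any such witnessing partition for each $(\LL,r)$-free set yields an injection from the collection of $(\LL,r)$-free sets into the collection of disjoint independent $r$-tuples of $\mathcal H$. Since $A$ is irredundant partition regular of full rank, $\mathcal H$ has $\Theta(n^{k-\ell})$ edges and its co-degrees are controlled by $m(A)$, placing $\mathcal H$ in the regime where the container theorem applies. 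Applying Proposition~\ref{multi} with a small parameter $\eta>0$ will then produce a family $\mathcal F$ of $r$-tuples $(C_1,\dots,C_r)$ of subsets of $[n]$, with $|\mathcal F|\le 2^{o(n)}$, such that every disjoint independent $r$-tuple $(I_1,\dots,I_r)$ of $\mathcal H$ satisfies $I_i\subseteq C_i$ for some $(C_1,\dots,C_r)\in\mathcal F$, and $e(\mathcal H[C_i])\le \eta\, e(\mathcal H)$ for each $i$.

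The heart of the argument will be to show that $|C_1\cup\cdots\cup C_r|\le\mu(n,\LL,r)+\delta n$ for every container tuple. To do so I would $r$-colour each $x\in T:=C_1\cup\cdots\cup C_r$ by the smallest index $i$ with $x\in C_i$; under this colouring $T$ has at most $\sum_i e(\mathcal H[C_i])\le r\eta\,e(\mathcal H)=O(\eta n^{k-\ell})$ monochromatic $k$-distinct solutions to $\LL$. I would then invoke a multicolour arithmetic supersaturation statement: for every $\delta>0$ there exists $\gamma>0$ such that any $T\subseteq[n]$ with $|T|\ge\mu(n,\LL,r)+\delta n$ admits at least $\gamma n^{k-\ell}$ monochromatic $k$-distinct solutions to $\LL$ under every $r$-colouring. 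Choosing $\eta<\gamma/r$ forces $|T|\le\mu(n,\LL,r)+\delta n$, and combining with the bounds on $|\mathcal F|$ yields
$$|\{(\LL,r)\text{-free subsets of }[n]\}|\le|\mathcal F|\cdot 2^{\mu(n,\LL,r)+\delta n}\le 2^{\mu(n,\LL,r)+o(n)}$$
as $\delta\to 0$ sufficiently slowly with $n$.

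The main obstacle will be establishing the multicolour arithmetic supersaturation statement. Its single-colour version is the classical theorem of Frankl, Graham and R\"odl~\cite{fgr}, and the $r$-colour strengthening should follow either by an inductive/averaging argument over the colour classes or, more robustly, by an arithmetic removal lemma applied colour-by-colour: given an $r$-colouring of $T$ with $o(n^{k-\ell})$ monochromatic solutions, one removes $o(n)$ elements from each colour class to destroy every monochromatic solution, leaving an $(\LL,r)$-free subset of $T$ of size $|T|-o(n)$, whence $|T|\le\mu(n,\LL,r)+o(n)$. Once this ingredient is in hand, everything else is a fairly routine invocation of the container machinery already developed earlier in the paper.
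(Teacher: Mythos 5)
Your proposal is correct and follows essentially the same route the paper takes: encode $(\LL,r)$-free sets as tuples of disjoint independent sets in the hypergraph of $k$-distinct solutions, apply the $r$-tuple container theorem (the paper uses its asymmetric variant, Proposition~\ref{asymmulti}, folded into Theorem~\ref{matcontainer}), and control the size of each container union via the multicolour supersaturation Lemma~\ref{matsupersat}, which the paper likewise derives from the removal lemma of Kr\'al', Serra and Vena~\cite{ksv2} exactly as you sketch. The only organisational difference is that the paper builds the bound $|\bigcup_i f(S_i)|\le\mu(n,\LL,r)+\delta n$ into the container theorem itself rather than applying supersaturation as a separate step, and it carries out the degree-condition verification you allude to via Proposition~\ref{matmL} and Corollary~\ref{matupper2}.
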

As an illustration, a result of Hu~\cite{hu} implies that $\mu (n,\LL, 2)=4n/5+o(n)$ in the case when $\LL$ is $x+y=z$. Thus, Theorem~\ref{radonumnew} tells us all but $2^{(4/5+o(1))n}$ subsets of $[n]$  are
$(\LL,2)$-Rado in this case.
Related results (in the $1$-colour case) were obtained by Green~\cite{G-R} and Saxton and Thomason~\cite{onlinecontainers}.


\section{Notation}
For a (hyper)graph $H$, we define $V(H)$ and $E(H)$ to be the vertex and edge sets of $H$ respectively, and set $v(H):=|V(H)|$ and $e(H):=|E(H)|$. For a set $A \subseteq V(H)$, we define $H[A]$ to be the induced subgraph of $H$ on the vertex set $A$. For an edge set $X \subseteq E(H)$, we define $H - X$ to be hypergraph with vertex set $V(H)$ and edge set $E(H) \setminus X$. 

For a set $A$ and a positive integer $x$, we define $\binom{A}{x}$ to be the set of all subsets of $A$ of size $x$, and we define $\binom{A}{\leq x}$ to be the set of all subsets of $A$ of size at most $x$. We use $\mathcal{P}(X)$ to denote the powerset of $X$, that is, the set of all subsets of $X$. If $B$ is a family of subsets of $A$, then we define $\overline{B}$ to be the complement family, that is, precisely the subsets of $A$ which are not in $B$. 

Given a hypergraph $\mathcal{H}$, for each $T \subseteq V(\mathcal{H})$, we define $\deg_{\mathcal{H}}(T):=|\{e \in E(\mathcal{H}): T \subseteq e\}|$, and let $\Delta_{\ell}(\mathcal{H}):=\max\{\deg_{\mathcal{H}}(T):T \subseteq V(\mathcal{H})$ and $|T|=\ell \}$.

We write $x = a\pm b$ to say that the value of $x$ is some real number in the interval $[a-b,a+b]$.
We use the convention that the set of natural numbers $\mathbb N$ does not include zero.

We will make use of the following Chernoff inequality (see e.g.  \cite[Theorem 2.1, Corollary 2.3]{Janson&Luczak&Rucinski00}). 
\begin{proposition} \label{chernoff}
Suppose $X$ has binomial distribution and $\lambda \geq 0$.
 Then
$$
\mathbb{P}[X > \mathbb{E}[X]+\lambda] \leq \exp\left(-\frac{\lambda^2}{2(\mathbb{E}[X]+\lambda/3)}\right).
$$
Further, if $0<\eps \leq 3/2$ then 
$$\mathbb P[|X-\mathbb E[X]|\geq \eps \mathbb E[X]]\leq 2\exp \left(-\frac{\eps ^2}{3} \mathbb E[X]\right).$$
\end{proposition}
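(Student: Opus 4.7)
The plan is to prove this via the standard Chernoff/Bernstein argument: apply Markov's inequality to the exponential moment, compute the moment generating function of the binomial, optimise the free parameter, and then convert the resulting Cram\'er-type rate function into the stated quadratic-in-$\lambda$ form.

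First I would write $X = \sum_{i=1}^n X_i$ where the $X_i$ are i.i.d.\ Bernoulli with parameter $p$, and let $\mu := \mathbb{E}[X] = np$. For any $t > 0$, Markov's inequality applied to $e^{tX}$ gives
$$
\mathbb{P}[X > \mu + \lambda] \;\leq\; e^{-t(\mu+\lambda)}\,\mathbb{E}[e^{tX}] \;=\; e^{-t(\mu+\lambda)}\bigl(1-p+pe^t\bigr)^n.
$$
Using the elementary inequality $1+x \leq e^x$ on $x = p(e^t-1)$, one obtains the clean bound $\mathbb{E}[e^{tX}] \leq \exp(\mu(e^t-1))$, so
$$
\mathbb{P}[X > \mu+\lambda] \;\leq\; \exp\bigl(\mu(e^t-1) - t(\mu+\lambda)\bigr).
$$
Optimising over $t > 0$ by setting $t = \log(1+\lambda/\mu)$ yields the Cram\'er-style bound $\mathbb{P}[X > \mu+\lambda] \leq \exp(-\mu\, h(\lambda/\mu))$, where $h(x) := (1+x)\log(1+x) - x$.

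Next I would convert this into the stated form. The key analytic inequality is
$$
h(x) \;\geq\; \frac{x^2}{2(1+x/3)} \qquad \text{for all } x \geq 0,
$$
which I would verify by showing that the function $g(x) := h(x)\cdot 2(1+x/3) - x^2$ satisfies $g(0)=0$ and $g'(x) \geq 0$ on $[0,\infty)$ (a short calculus exercise, reducing to a convexity check on $\log(1+x)$). Substituting $x = \lambda/\mu$ and multiplying through by $\mu$ gives
$$
\mu\, h(\lambda/\mu) \;\geq\; \frac{\lambda^2}{2(\mu + \lambda/3)},
$$
which delivers the first inequality. For the second inequality, I would apply the first with $\lambda = \eps\mu$ to get an upper-tail bound of $\exp(-\eps^2\mu/(2+2\eps/3))$, and combine it with the analogous lower-tail bound obtained by repeating the moment-generating-function argument with $t<0$ (equivalently, applying the upper-tail result to $n-X \sim \mathrm{Bin}(n,1-p)$ when convenient). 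The constraint $\eps \leq 3/2$ is precisely what is needed for $2+2\eps/3 \leq 3$, turning the denominator into $3$ and giving the clean bound $2\exp(-\eps^2\mu/3)$.

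The only genuinely non-trivial step is the inequality $h(x) \geq x^2/(2(1+x/3))$, and even this is a short one-variable calculus check; the rest is mechanical. Since the paper simply cites \cite{Janson&Luczak&Rucinski00} for this standard fact, I would present the argument compactly as above rather than proving it in full detail.
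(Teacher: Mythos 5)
Your argument is correct, and it is essentially the standard proof of this inequality: the paper itself offers no proof, citing \cite{Janson&Luczak&Rucinski00}, and your exponential-moment argument with the rate function $h(x)=(1+x)\log(1+x)-x$ and the bound $h(x)\geq x^2/(2(1+x/3))$ is exactly the derivation given there. One small caveat: your parenthetical claim that the lower tail can be handled ``equivalently'' by applying the first inequality to $n-X\sim\mathrm{Bin}(n,1-p)$ is not quite right, since that yields a denominator $2(n(1-p)+\lambda/3)$ rather than $2(np+\lambda/3)$ and is weaker when $p$ is small; your primary route (the moment-generating-function argument with $t<0$, giving $\mathbb{P}[X\leq \mathbb{E}[X]-\lambda]\leq \exp(-\lambda^2/(2\mathbb{E}[X]))$ and hence, with $\lambda=\eps\mathbb{E}[X]$ and $\eps\leq 3/2$, the stated two-sided bound) is the correct one and suffices.
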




\section{Container results for disjoint independent sets}\label{sec3}
Let $\mathcal{H}$ be a $k$-uniform hypergraph with vertex set $V$. A family of sets $\mathcal{F} \subseteq \mathcal{P}(V)$ is called \em{increasing }\rm if it is closed under taking supersets; in other words for every $A,B \subseteq V$, if $A \in \mathcal{F}$ and $A \subseteq B$, then $B \in \mathcal{F}$. Suppose $\mathcal{F}$ is an increasing family of subsets of $V$ and let $\eps \in (0,1]$. We say that $\mathcal{H}$ is $(\mathcal{F},\eps)$\em{-dense }\rm if $$e(\mathcal{H}[A]) \geq \eps e(\mathcal{H})$$ for every $A \in \mathcal{F}$. We define $\mathcal{I}(\mathcal{H})$ to be the set of all independent sets in $\mathcal{H}$.

The next result is the general hypergraph container theorem of Balogh, Morris and Samotij~\cite{container1}.
\begin{thm}[\cite{container1}, Theorem~2.2]\label{BMS}
For every $k \in \mathbb{N}$ and all positive $c$ and $\eps$, there exists a positive constant $C$ such that the following holds. Let $\mathcal{H}$ be a $k$-uniform hypergraph and let $\mathcal{F} \subseteq \mathcal{P}(V(\mathcal{H}))$ be an increasing family of sets such that $|A| \geq \eps v(\mathcal{H})$ for all $A \in \mathcal{F}$. Suppose that $\mathcal{H}$ is $(\mathcal{F},\eps)$-dense and $p \in (0,1)$ is such that, for every $\ell \in [k]$, 
$$\Delta_{\ell}(\mathcal{H}) \leq c \cdot p^{\ell-1} \frac{e(\mathcal{H})}{v(\mathcal{H})}.$$ 
Then there exists a family $\mathcal{S} \subseteq \binom{V(\mathcal{H})}{\leq Cp\cdot v(\mathcal{H})}$ and functions $f:\mathcal{S} \to \overline{\mathcal{F}}$ and $g:\mathcal{I}(\mathcal{H}) \to \mathcal{S}$ such that for every $I \in \mathcal{I}(\mathcal{H})$, we have that $g(I) \subseteq I$ and $I \setminus g(I) \subseteq f(g(I))$. 
\end{thm}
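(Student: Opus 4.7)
The plan is to prove Theorem~\ref{BMS} by constructing a deterministic algorithm that, for each independent set $I \in \mathcal{I}(\mathcal{H})$, produces a ``fingerprint'' $g(I) \subseteq I$ of size at most $Cp\cdot v(\mathcal H)$, together with a container $f(g(I)) \in \overline{\mathcal F}$ satisfying $I \setminus g(I) \subseteq f(g(I))$. The container will be read off from the state of the algorithm at termination, and the crucial design feature is that $f(S)$ depends only on $S$, not on $I$, so that running the algorithm ``blind'' using just membership queries into $S$ reproduces the same container.

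First I would fix a total ordering of $V(\mathcal H)$ and set up a potential function. At stage $j$, with residual hypergraph $\mathcal H_j$, define for each surviving vertex $v$ a potential $\Phi_j(v)$ aggregating $\deg_{\mathcal H_j}(T)$ over sets $T \ni v$ at every level $\ell \in [k]$, rescaled by the appropriate power of $p$; the hypothesis $\Delta_\ell(\mathcal H) \leq c\, p^{\ell-1} e(\mathcal H)/v(\mathcal H)$ then yields a uniform initial bound on $\Phi$. The algorithm selects $v_j$ of maximum $\Phi_j$ (ties broken by the fixed order), deletes $v_j$ and all edges through it to obtain $\mathcal H_{j+1}$, and if $v_j \in I$ also appends $v_j$ to $S$. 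Terminate when all remaining potentials drop below a threshold $\tau = \tau(c,\eps,k)$, and set $g(I) := S$ and $f(S) := V(\mathcal H_{\text{final}})$.

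The two things to verify are: (i) $|g(I)| \leq Cp\cdot v(\mathcal H)$, via a charging argument in which each fingerprint step has potential at least $\tau$ and hence deletes a definite fraction of the total weighted edge count, with the cumulative total controlled by $e(\mathcal H)$; and (ii) $f(S) \notin \mathcal F$, by contradiction: if $f(S) \in \mathcal F$, then $(\mathcal F,\eps)$-density gives $e(\mathcal H[f(S)]) \geq \eps e(\mathcal H)$, yet every edge of $\mathcal H$ contained in $f(S)$ survives to $\mathcal H_{\text{final}}$, while $\tau$ is chosen small enough to force the residual hypergraph to contain strictly fewer than $\eps e(\mathcal H)$ edges. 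The fact that $f$ depends only on $S$ follows because, given $S$ and the fixed ordering, one reconstructs $\mathcal H_j$ inductively: the choice of $v_{j+1}$ is determined by $\mathcal H_j$ and the ordering, and the treatment of $v_{j+1}$ is determined by checking whether $v_{j+1} \in S$.

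The main obstacle will be the multi-scale bookkeeping underlying (i): one must simultaneously control $\Delta_\ell(\mathcal H_j)$ at every level $\ell \in [k]$ so that each fingerprint step removes a definite fraction of a weighted edge count, even though a vertex of high $\ell$-degree need not have high $\ell'$-degree for $\ell' \neq \ell$. This is the heart of the Balogh--Morris--Samotij argument and is typically handled either through an inductive reduction on $k$ or through the explicit ``scythe''/``$j$-tree'' structures they develop; extracting the final constant $C = C(c,\eps,k)$ from the iterated estimates is where the bulk of the technical work lies.
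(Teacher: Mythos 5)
The paper does not prove this theorem; it imports it as \cite{container1}, Theorem~2.2. So the only question is whether your proposed proof is sound, and it is not: there is a genuine structural gap in the algorithm itself, independent of the multi-scale bookkeeping you flag at the end.

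In your algorithm, the update rule ``select $v_j$ of maximum potential, delete $v_j$ and all edges through it'' is applied \emph{regardless} of whether $v_j \in I$; membership in $I$ only affects whether $v_j$ is appended to the fingerprint $S$. Consequently the entire trajectory $\mathcal H = \mathcal H_0 \supseteq \mathcal H_1 \supseteq \cdots \supseteq \mathcal H_{\mathrm{final}}$ is deterministic and does not depend on $I$ at all, the set $D$ of deleted vertices is fixed, $f(S) = V(\mathcal H_{\mathrm{final}})$ is the same set for every $I$, and $g(I) = I \cap D$. But then there is no mechanism that bounds $|g(I)|$ by $Cp\cdot v(\mathcal H)$: nothing in the hypotheses prevents some independent set from meeting $D$ in far more than $Cp\cdot v(\mathcal H)$ vertices, and indeed your charging argument, as stated, only bounds the number of \emph{steps} of the algorithm (which is $\Theta(v(\mathcal H))$ if the stopping rule is to kill a positive fraction of the edges), not the size of $I \cap D$.

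The missing ingredient is the \emph{asymmetric treatment} of the selected vertex that is the whole point of the Kleitman--Winston and Balogh--Morris--Samotij algorithms. In the graph ($k=2$) case, when $v_j \in I$ one must delete not just $v_j$ but also its entire neighbourhood from the residual graph: since $I$ is independent, $N(v_j) \cap I = \emptyset$, so this is harmless for $I$, but it guarantees that each fingerprint step removes at least $\tau$ vertices, which is exactly what gives the $O(v(\mathcal H)/\tau) = O(p\cdot v(\mathcal H))$ fingerprint bound. When $v_j \notin I$, one deletes only $v_j$. For general $k$ the analogue of ``delete $N(v_j)$'' is more delicate (one passes to the link of $v_j$ and works down through the uniformities, which is where the scythe machinery enters), but the branching on $v_j \in I$ versus $v_j \notin I$, \emph{with different updates in the two cases}, is not a technical nuisance to be absorbed into the constant $C$; it is the load-bearing idea. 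Your claim that $f$ is reconstructible from $S$ because ``the treatment of $v_{j+1}$ is determined by checking whether $v_{j+1} \in S$'' is true only in the degenerate sense that the treatment is the same either way, which is precisely why the argument returns no useful bound. To repair the proposal you would need to make the deletion rule $I$-sensitive in the KW/BMS manner and then redo the accounting so that fingerprint steps, not arbitrary steps, drive the potential down.
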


Using the above notation, we refer to the set $\mathcal{C}:=\{ f(g(I)) \cup g(I) : I \in \mathcal{I}(\mathcal{H})$\} as a set of \emph{containers} and the $g(I) \in \mathcal{S}$ as \emph{fingerprints}.%

\smallskip

Throughout the paper, when we consider $r$-tuples of sets, the $r$-tuples are \emph{always} ordered.
For two $r$-tuples of sets $(I_1,\dots,I_r)$ and $(J_1,\dots,J_r)$ we write $(I_1,\dots,I_r) \subseteq (J_1,\dots,J_r)$ if $I_x \subseteq J_x$ for each $x \in [r]$. 
We write $(I_1,\dots,I_r)\cup (J_1,\dots,J_r):=(I_1 \cup J_1,\dots,I_r\cup J_r)$.

If $\mathcal X$ is a collection of sets then we write $\mathcal X^r$ for the collection of $r$-tuples $(X_1, \dots, X_r)$ so that $X_i \in \mathcal X$ for all $1\leq i \leq r$.
So for example, $\mathcal{P}([n])^r$ denotes the collection  of all $r$-tuples $(X_1, \dots, X_r)$ so that $X_i \subseteq [n]$ for all $1\leq i \leq r$.
 We write $ij$ to denote the pair $\{i,j\}$. For a hypergraph $\mathcal{H}$ 
define $$\mathcal{I}_r(\mathcal{H}):=\bigg\{(I_1,\dots,I_r) \in \mathcal{P}(V(\mathcal{H}))^r: I_x \in \mathcal{I}(\mathcal{H}) \text{ and } I_i \cap I_j = \emptyset \text{ for all } x \in [r], ij \in \binom{[r]}{2}\bigg\}.$$

Whereas Theorem~\ref{BMS} provides a set of containers for the independent sets of a hypergraph, the following proposition is an analogous result for the $r$-tuples of disjoint independent sets of a hypergraph. 
It is a straightforward consequence of Theorem~\ref{BMS}.

\begin{prop}\label{multi}
For every $k,r \in \mathbb{N}$ and all positive $c$ and $\eps$, there exists a positive constant $C$ such that the following holds. Let $\mathcal{H}$ be a $k$-uniform hypergraph and let $\mathcal{F} \subseteq \mathcal{P}(V(\mathcal{H}))$ be an increasing family of sets such that $|A| \geq \eps v(\mathcal{H})$ for all $A \in \mathcal{F}$. Suppose that $\mathcal{H}$ is $(\mathcal{F},\eps)$-dense and $p \in (0,1)$ is such that, for every $\ell \in [k]$, 
$$\Delta_{\ell}(\mathcal{H}) \leq c \cdot p^{\ell-1} \frac{e(\mathcal{H})}{v(\mathcal{H})}.$$ 
Then there exists a family $\mathcal{S}_r \subseteq \mathcal{I}_r(\mathcal{H})$ 
and functions $f:\mathcal{S}_r \to (\overline{\mathcal{F}})^r$ and $g:\mathcal I_r(\mathcal H) \to \mathcal S_r$ such that the following conditions hold:
\begin{itemize}
\item[(i)] If $(S_1,\dots, S_r) \in \mathcal S_r$ then $\sum |S_i|\leq Cp \cdot v(\mathcal{H})$;
\item[(ii)] for every $(I_1,\dots,I_r) \in \mathcal{I}_r(\mathcal{H})$, we have  that $S \subseteq (I_1,\dots,I_r) \subseteq S\cup f(S)$ where $S:=g( I_1,\dots, I_r)$.
\end{itemize} 
\end{prop}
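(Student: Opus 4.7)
The plan is to apply Theorem~\ref{BMS} coordinatewise: given an $r$-tuple of pairwise disjoint independent sets, we apply the functions supplied by Theorem~\ref{BMS} to each component independently and package the outputs into an $r$-tuple. The only nontrivial thing to verify is that this packaging lands in the right spaces and that the size bound in (i) absorbs the factor of $r$.

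More precisely, I would first let $C'$ be the constant produced by Theorem~\ref{BMS} applied with the given $k$, $c$ and $\eps$, and set $C := rC'$. Let $g_0 \colon \mathcal{I}(\mathcal{H}) \to \mathcal{S}$ and $f_0 \colon \mathcal{S} \to \overline{\mathcal{F}}$ be the fingerprint and container functions from Theorem~\ref{BMS}, so that $g_0(I) \subseteq I$ and $I\setminus g_0(I) \subseteq f_0(g_0(I))$ and $|g_0(I)| \leq C' p \cdot v(\mathcal{H})$ for every $I \in \mathcal{I}(\mathcal{H})$. Define $g \colon \mathcal{I}_r(\mathcal{H}) \to \mathcal{P}(V(\mathcal{H}))^r$ by
$$
g(I_1,\dots,I_r) := (g_0(I_1),\dots,g_0(I_r)),
$$
set $\mathcal{S}_r := g(\mathcal{I}_r(\mathcal{H}))$, and define $f \colon \mathcal{S}_r \to \mathcal{P}(V(\mathcal{H}))^r$ by $f(S_1,\dots,S_r) := (f_0(S_1),\dots,f_0(S_r))$.

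The key checks are then as follows. For each $i$, $g_0(I_i)$ is an independent set since it is contained in the independent set $I_i$; and the sets $g_0(I_1),\dots,g_0(I_r)$ are pairwise disjoint since the $I_i$ are and $g_0(I_i)\subseteq I_i$. Hence $g(I_1,\dots,I_r)\in\mathcal{I}_r(\mathcal{H})$, so indeed $\mathcal{S}_r\subseteq\mathcal{I}_r(\mathcal{H})$. Since $f_0$ takes values in $\overline{\mathcal{F}}$, the function $f$ takes values in $(\overline{\mathcal{F}})^r$. Property (i) holds because $\sum_{i=1}^r |g_0(I_i)|\leq rC'p\cdot v(\mathcal{H}) = Cp\cdot v(\mathcal{H})$; and property (ii) follows coordinatewise from $g_0(I_i)\subseteq I_i \subseteq g_0(I_i)\cup f_0(g_0(I_i))$ for each $i\in [r]$.

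There is no genuine obstacle here; the only subtlety is bookkeeping, namely making sure that the constant in (i) is adjusted to $C = rC'$ in order to accommodate $r$ fingerprints rather than one, and observing that disjointness of the $I_i$ automatically transfers to disjointness of the fingerprints $g_0(I_i)$ so that $g(I_1,\dots,I_r)$ really lies in $\mathcal{I}_r(\mathcal{H})$. No new properties of $\mathcal{H}$, $\mathcal{F}$ or $p$ are needed beyond those that already fed into Theorem~\ref{BMS}.
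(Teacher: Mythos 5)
Your proof is correct and follows essentially the same route as the paper: apply Theorem~\ref{BMS} once to get coordinatewise fingerprint and container maps, package them into $r$-tuples, and scale the constant by $r$. The only cosmetic difference is that you take $\mathcal{S}_r$ to be exactly the image of $g$, while the paper takes the (slightly larger) set of all $r$-tuples of pairwise disjoint fingerprints; both choices satisfy the required properties.
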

\begin{proof}
Apply Theorem~\ref{BMS} with $k,c,\eps$ to obtain a positive constant $C_1$. Let $C:=rC_1$. We will show that $C$ has the required properties. Let $\mathcal{H}$ be a $k$-uniform hypergraph which together with a set $\mathcal{F} \subseteq \mathcal{P}(V(\mathcal{H}))$ satisfies the hypotheses of Proposition~\ref{multi}. 
Since $\mathcal{H}$, $\mathcal{F}$ also satisfy the hypotheses of Theorem~\ref{BMS}, there exists a family $\mathcal{S} \subseteq \binom{V(\mathcal{H})}{\leq C_1p\cdot v(\mathcal{H})}$ and functions $f':\mathcal{S} \to \overline{\mathcal{F}}$ and $g':\mathcal{I}(\mathcal{H}) \to \mathcal{S}$ such that for every $I \in \mathcal{I}(\mathcal{H})$ we have $g'(I) \subseteq I$ and $I \setminus g'(I) \subseteq f'(g'(I))$. Define 
$$\mathcal{S'}:=\{ S \in \mathcal{S}: \text{ there exists } I \in \mathcal{I}(\mathcal{H}) \text{ such that } g'(I)=S\},$$ and
$$\mathcal{S}_r:=\bigg\{(S_1,\dots,S_r) \in \mathcal{P}(V(\mathcal{H}))^r: S_x \in \mathcal{S'} \text{ and } S_i \cap S_j = \emptyset \text{ for all } x \in [r], ij \in \binom{[r]}{2}\bigg\}.$$

Let $(S_1,\dots,S_r) \in \mathcal{S}_r$. First note that $$ \sum_{x\in [r]} |S_x| \leq C_1 r \cdot p v(\mathcal{H}) = Cp \cdot v(\mathcal{H}),$$ so (i) holds. 
Also since $S_x \in \mathcal{S'}$ for all $x \in [r]$, we have $S_x \in \mathcal{I}(\mathcal{H})$ and so by definition of $\mathcal{S}_r$ we have $\mathcal{S}_r \subseteq \mathcal{I}_r(\mathcal{H})$. 

Consider any $(S_1,\dots,S_r) \in \mathcal{S}_r$ and any $(I_1,\dots,I_r) \in \mathcal{I}_r(\mathcal{H})$.
Define $f:\mathcal{S}_r \to (\overline{\mathcal{F}})^r$ by  setting $f(S_1,\dots,S_r):=(f'(S_1),\dots,f'(S_r))$ and define $g:\mathcal{I}_r(\mathcal{H}) \to \mathcal{S}_r$ by setting $g(I_1,\dots,I_r):=(g'(I_1),\dots,g'(I_r))$.

Note that since $f'(S_x) \in \overline{\mathcal{F}}$, $g'(I_x) \in \mathcal{S'}$ and $g'(I_i) \cap g'(I_j) = \emptyset$ for all $x \in [r]$ and $ij \in \binom{[r]}{2}$, we do indeed have $(f'(S_1),\dots,f'(S_r)) \in (\overline{\mathcal{F}})^r$ and $(g'(I_1),\dots,g'(I_r)) \in \mathcal{S}_r$.

Now for (ii), since $g'(I_x) \subseteq I_x$ and $I_x \setminus g'(I_x) \subseteq f'(g'(I_x))$ for all $x \in [r]$, we have
$g(I_1,\dots,I_r)=(g'(I_1),\dots,g'(I_r)) \subseteq (I_1,\dots,I_r)$. 
Since $f(g(I_1,\dots,I_r))=(f'(g'(I_1)),\dots,f'(g'(I_r)))$ we also have $(I_1,\dots,I_r) \subseteq f(g(I_1,\dots,I_r)) \cup g(I_1,\dots,I_r)$ as required. 
\end{proof}


In all of our applications of the container method, we will in fact apply the following asymmetric version of Proposition~\ref{multi}. In particular, in the proof of e.g.~Theorem~\ref{ramres}, instead of considering tuples of disjoint independent sets from the \emph{same} hypergraph $\mathcal H$, we are actually concerned with disjoint independent sets from \emph{different} hypergraphs but which have the \emph{same vertex set}:
For all $i \in [r]$, let $\mathcal{H}_i$ be a $k_i$-uniform hypergraph, each on the same vertex set $V$, and define  $\mathcal{I}(\mathcal{H}_1,\ldots,\mathcal{H}_r)$ to be the set of all $r$-tuples $(I_1,\ldots,I_r) \in \prod_{i \in [r]}\mathcal{I}(\mathcal{H}_i)$ such that $I_i \cap I_j = \emptyset$ for all $1 \leq i < j \leq r$.

We omit the proof of Proposition~\ref{asymmulti} since it follows from Theorem~\ref{BMS} as in the proof of Proposition~\ref{multi}.

\begin{prop}\label{asymmulti}
For every $r,k_1,\ldots,k_r \in \mathbb{N}$ with $k_i \geq 2$ for all $i \in [r]$, and all $c,\eps > 0$, there exists a positive constant $C$ such that the following holds. For all $i \in [r]$, let $\mathcal{H}_i$ be a $k_i$-uniform hypergraph, each on the same vertex set $V$.
For all $i \in [r]$, let $\mathcal{F}_i \subseteq \mathcal{P}(V)$ be an increasing family of sets such that $|A| \geq \eps |V|$ for all $A \in \mathcal{F}_i$. 
Suppose that each $\mathcal{H}_i$ is $(\mathcal{F}_i,\eps)$-dense.
Further suppose
$p \in (0,1)$ is such that, for every $i \in [r]$ and $\ell \in [k_i]$, 
$$\Delta_{\ell}(\mathcal{H}_i) \leq c \cdot p^{\ell-1} \frac{e(\mathcal{H}_i)}{|V|}.$$ 
Then there exists a family $\mathcal{S}_r \subseteq \mathcal{I}(\mathcal{H}_1,\ldots,\mathcal{H}_r) $ and  functions $f:\mathcal{S}_r \to \prod_{i \in [r]}\overline{\mathcal{F}_i}$ and $g:\mathcal{I}(\mathcal{H}_1,\ldots,\mathcal{H}_r) \to \mathcal S_r$
such that the following conditions hold:
\begin{itemize}
\item[(i)] If $(S_1,\dots, S_r) \in \mathcal S_r$ then $\sum |S_i|\leq Cp |V|$;
\item[(ii)] for every $(I_1,\dots,I_r) \in \mathcal{I}(\mathcal{H}_1,\ldots,\mathcal{H}_r)$, we have  that $S \subseteq (I_1,\dots,I_r) \subseteq S\cup f(S)$ where $S:=g( I_1,\dots, I_r)$.
\end{itemize}
\end{prop}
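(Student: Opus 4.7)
The plan is to apply Theorem~\ref{BMS} separately to each of the $r$ pairs $(\mathcal{H}_i,\mathcal{F}_i)$ and then to stitch the resulting fingerprints together into $r$-tuples, exactly mirroring the proof of Proposition~\ref{multi}. The key observation is that all $r$ hypergraphs live on the same vertex set $V$, so a tuple of disjoint independent sets really is just a tuple whose $i$-th coordinate is independent in $\mathcal{H}_i$, with pairwise disjointness between coordinates.

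First I would apply Theorem~\ref{BMS} once for each $i \in [r]$ with parameters $k_i, c, \eps$ to obtain constants $C_i'$, and then set $C := \sum_{i \in [r]} C_i'$. Since each pair $(\mathcal{H}_i,\mathcal{F}_i)$ satisfies the hypotheses of Theorem~\ref{BMS} with respect to the given $p$ (via the assumed degree bounds for every $\ell \in [k_i]$), Theorem~\ref{BMS} yields a family $\mathcal{S}^{(i)} \subseteq \binom{V}{\leq C_i' p|V|}$ together with functions $f_i' : \mathcal{S}^{(i)} \to \overline{\mathcal{F}_i}$ and $g_i' : \mathcal{I}(\mathcal{H}_i) \to \mathcal{S}^{(i)}$ such that $g_i'(I) \subseteq I$ and $I \setminus g_i'(I) \subseteq f_i'(g_i'(I))$ for every $I \in \mathcal{I}(\mathcal{H}_i)$.

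Next I would pass to the images $\mathcal{S}_i' := g_i'(\mathcal{I}(\mathcal{H}_i)) \subseteq \mathcal{S}^{(i)}$ (to ensure each coordinate is actually a fingerprint of some independent set, hence independent itself) and then define
$$\mathcal{S}_r := \Bigl\{(S_1,\ldots,S_r) \in \prod_{i \in [r]} \mathcal{S}_i' : S_i \cap S_j = \emptyset \text{ for all } ij \in \binom{[r]}{2}\Bigr\}.$$
Since $S_i \in \mathcal{S}_i'$ implies $S_i \in \mathcal{I}(\mathcal{H}_i)$, this gives $\mathcal{S}_r \subseteq \mathcal{I}(\mathcal{H}_1,\ldots,\mathcal{H}_r)$. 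I would then define $f$ and $g$ componentwise by $f(S_1,\ldots,S_r) := (f_1'(S_1),\ldots,f_r'(S_r))$ and $g(I_1,\ldots,I_r) := (g_1'(I_1),\ldots,g_r'(I_r))$. The image of $g$ genuinely lands in $\mathcal{S}_r$ because $g_i'(I_i) \subseteq I_i$ and the $I_i$ are pairwise disjoint, while the image of $f$ lands in $\prod_{i \in [r]} \overline{\mathcal{F}_i}$ by construction.

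Finally I would verify (i) and (ii). For (i), $\sum_{i} |S_i| \leq \sum_{i} C_i' p|V| = Cp|V|$. For (ii), $g(I_1,\ldots,I_r) \subseteq (I_1,\ldots,I_r)$ follows from $g_i'(I_i) \subseteq I_i$ coordinatewise, and $(I_1,\ldots,I_r) \subseteq g(I_1,\ldots,I_r) \cup f(g(I_1,\ldots,I_r))$ follows from $I_i \setminus g_i'(I_i) \subseteq f_i'(g_i'(I_i))$ coordinatewise. Since each hypergraph is treated in isolation, there is no real obstacle here: the only thing to track carefully is that Theorem~\ref{BMS} is applied with the correct uniformity $k_i$ for each $\mathcal{H}_i$, and that the disjointness condition for tuples in $\mathcal{S}_r$ is preserved by $g$ thanks to the inclusion $g_i'(I_i) \subseteq I_i$.
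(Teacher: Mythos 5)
Your proof is correct and takes essentially the same approach as the paper's (which is stated to be a direct adaptation of the proof of Proposition~\ref{multi}): apply Theorem~\ref{BMS} to each pair $(\mathcal{H}_i,\mathcal{F}_i)$ separately with uniformity $k_i$, restrict to the images of the $g_i'$ so each coordinate is itself independent, form $\mathcal{S}_r$ from pairwise-disjoint tuples, and define $f$ and $g$ componentwise. The only cosmetic difference is your choice $C=\sum_i C_i'$ versus the paper's $C=rC_1$ with $C_1$ the maximum of the $C_i'$; both yield a valid constant.
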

\COMMENT{
AT NEW: In proof of new Thm 3.3 need to apply Thm 3.1 $r$ times (for each $k_i$) and let $C_1$ be the maximum of all the Cs produced.\\
\begin{proof}
Apply Theorem~\ref{BMS} with $k,c,\eps$ to obtain a positive constant $C_1$. Let $C:=rC_1$. We will show that $C$ has the required properties. Suppose $\mathcal{H}_1, \ldots, \mathcal{H}_r$ are $k$-uniform hypergraphs on the same vertex set $V$ which together with sets $\mathcal{F}_1, \ldots, \mathcal{F}_r \subseteq \mathcal{P}(V)$ satisfy the hypotheses of Proposition~\ref{asymmulti}. 
Since for each $x \in [r]$, the pair $\mathcal{H}_x$, $\mathcal{F}_x$ also satisfy the hypotheses of Theorem~\ref{BMS}, there exist families $\mathcal{T}_x \subseteq \binom{V}{\leq C_1p\cdot |V|}$ and functions $f_x':\mathcal{T}_x \to \overline{\mathcal{F}_x}$ and $g_x':\mathcal{I}(\mathcal{H}_x) \to \mathcal{T}_x$ such that for every $I \in \mathcal{I}(\mathcal{H}_x)$ we have $g_x'(I) \subseteq I$ and $I \setminus g_x'(I) \subseteq f_x'(g_x'(I))$. Define 
$$\mathcal{T}_x':=\{ S \in \mathcal{T}_x: \text{ there exists } I \in \mathcal{I}(\mathcal{H}_x) \text{ such that } g_x'(I)=S\},$$ and
$$\mathcal{S}_r:=\bigg\{(S_1,\dots,S_r) \in \mathcal{P}(V)^r: S_x \in \mathcal{T}_x' \text{ and } S_i \cap S_j = \emptyset \text{ for all } x \in [r], ij \in \binom{[r]}{2}\bigg\}.$$
\\ \\
Let $(S_1,\dots,S_r) \in \mathcal{S}_r$. First note that $$ \sum |S_x| \leq C_1 r \cdot p |V| = Cp \cdot |V|$$ so (i) holds. 
Also since $S_x \in \mathcal{T}_x'$ for all $x \in [r]$, we have $S_x \in \mathcal{I}(\mathcal{H}_x)$ and so by definition of $\mathcal{S}_r$ we have $\mathcal{S}_r \subseteq \mathcal{I}(\mathcal{H}_1,\ldots,\mathcal{H}_r)$ proving (ii). 
\\ \\
Define $f:\mathcal{S}_r \to \prod_{i \in [r]}\overline{\mathcal{F}_i}$ by, for $(S_1,\dots,S_r) \in \mathcal{S}_r$, setting $f(S_1,\dots,S_r):=(f_1'(S_1),\dots,f_r'(S_r))$ and define $g:\mathcal{I}(\mathcal{H}_1,\ldots,\mathcal{H}_r) \to \mathcal{S}_r$ by, for $(I_1,\dots,I_r) \in \mathcal{I}(\mathcal{H}_1,\ldots,\mathcal{H}_r)$, setting $g(I_1,\dots,I_r):=(g_1'(I_1),\dots,g_r'(I_r))$.
\\ \\
Note that since $f_x'(S_x) \in \overline{\mathcal{F}_x}$, $g_x'(I_x) \in \mathcal{T}_x'$ and $g_i'(I_i) \cap g_j'(I_j) = \emptyset$ for all $x \in [r]$ and $ij \in \binom{[r]}{2}$, we do indeed have $(f_1'(S_1),\dots,f_r'(S_r)) \in \prod_{i \in [r]}\overline{\mathcal{F}_i}$ and $(g_1'(I_1),\dots,g_r'(I_r)) \in \mathcal{S}_r$.
\\ \\
Now for (iii), since $g_x'(I_x) \subseteq I_x$ and $I_x \setminus g_x'(I_x) \subseteq f_x'(g_x'(I_x))$ for all $x \in [r]$, we have
$g(I_1,\dots,I_r)=(g_1'(I_1),\dots,g_r'(I_r)) \subseteq (I_1,\dots,I_r)$. 
Since $f(g(I_1,\dots,I_r))=(f_1'(g_1'(I_1)),\dots,f_r'(g_r'(I_r)))$ we also have $(I_1,\dots,I_r) \subseteq f(g(I_1,\dots,I_r)) \cup g(I_1,\dots,I_r)$ as required. 
\end{proof}
}

\section{Applications of the container method to \texorpdfstring{$(\LL,r)$}{(L,r)}-free sets}\label{sec4}

\COMMENT{At the moment we don't have a $0$-statement for the case of $r=1$. But probably that is easy to get that resilience is less then $\eps$ in this case:
 If $p \leq cn^{-1/m(A)}$ and $r \geq 2$ then Theorem~\ref{radores0} implies $\res([n]_p,(\LL,r)\text{-Rado})=0$. If $r=1$ then the expected number of solutions to $\LL$ in $[n]_p$ (which is $\Omega(n^{k-\ell} p^k$)) is significantly less than the expected size of $[n]_p$ (which is $np$). So there exists a subset of size $(1-o(1))|[n]_p|$ with no monochromatic solutions to $\LL$. So $\res([n]_p,(\LL,r)\text{-Rado}) \leq \eps |[n]_p|$.}

In this section we will prove Theorems~\ref{radores1new} and~\ref{radonumnew} by using the container theorem for $r$-tuples of disjoint independent sets, applied with irredundant partition regular matrices $A$. Suppose that we have a $k$-uniform hypergraph $\mathcal{H}$ whose vertex set is a subset of $\mathbb N$ and where the edges correspond to the $k$-distinct solutions of $\LL$. Then in this setting, an $(\LL,r)$-free set is precisely an $r$-tuple of disjoint independent sets in $\mathcal{H}$.

Theorems~\ref{radores1new} and~\ref{radonumnew} will be deduced from a container theorem, Theorem~\ref{matcontainer}, which in turn follows from Proposition~\ref{asymmulti}.
Theorem~\ref{matcontainer} actually holds for a class of irredundant matrices of which partition regular matrices are a subclass. Let $(*)$ be the following matrix property: 
\begin{itemize}
\item[$(*)$] Under Gaussian elimination $A$  does not have any row which consists of precisely two non-zero rational entries. 
\end{itemize}
Call an integer matrix $A$ (and the corresponding system of linear equations $\LL$) \emph{$r$-regular} if all $r$-colourings of $\mathbb{N}$ yield a monochromatic solution to $\LL$.  Observe that a matrix is $r$-regular for all $r \in \mathbb{N}$ if and only if it is partition regular.
As outlined in the next subsection, given any $r \geq 2$, all irredundant $r$-regular matrices $A$ satisfy $(*)$.
We will in fact prove stronger versions of  Theorems~\ref{radores1new} and~\ref{radonumnew} that 
consider irredundant matrices with property $(*)$.

These general results also consider `asymmetric' Rado properties:
Suppose that $\LL_i$ is a system of linear equations for each $1\leq i \leq r$ (and, here and elsewhere, $A_i$ is the matrix such that $\LL_i = \LL(A_i)$).
 We say a set $X\subseteq \mathbb N$ is \emph{$(\LL_1,\dots,\LL_r)$-free} if there is an $r$-colouring of $X$ such that there are no solutions to $\LL_i$ in $X$ in colour $i$ for every $i \in [r]$. 
Otherwise we say that $X$ is \emph{$(\LL_1,\dots,\LL_r)$-Rado}. 
We denote the size of the largest $(\LL_1,\dots,\LL_r)$-free subset of $[n]$ by $\mu(n,\LL_1,\dots,\LL_r)$. 

In general it is not known which systems of linear equations $\LL_1,\dots,\LL_r$ are
such that $\mathbb N$ is $(\LL_1,\dots,\LL_r)$-Rado. However, if each
 $\LL _i$ is an $r$-regular homogenous linear equation, then 
$\mathbb N$ is $(\LL_1,\dots,\LL_r)$-Rado (see~\cite[Theorem 9.19]{book}).

 We will prove the following strengthenings of Theorems~\ref{radores1new} and~\ref{radonumnew}.

\begin{thm}\label{radores1}
For all positive integers $r$, all irredundant full rank matrices $A_1, \dots, A_r$ which satisfy $(*)$ with $m(A_1) \geq \dots \geq m(A_r)$, and all $\delta >0$, there exists a constant $C>0$ such that
$$
\lim_{n \rightarrow \infty} \mathbb{P}\left[ \frac{\res([n]_p,(\LL_1,\dots,\LL_r)\text{-Rado})}{|[n]_p|}= 1-\frac{\mu(n,\LL_1,\dots,\LL_r)}{n}\pm \delta   \right] = 1 \quad\text{ if } p > Cn^{-1/m(A_1)}.
$$  
\end{thm}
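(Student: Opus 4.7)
\medskip
\noindent\emph{Proof plan.} The plan is to apply the multi-colour container Proposition~\ref{asymmulti} in conjunction with a disjoint-tuple supersaturation lemma. For each $i \in [r]$, let $\mathcal{H}_i$ be the $k_i$-uniform hypergraph on vertex set $[n]$ whose edges are precisely the $k_i$-distinct solutions to $\LL_i$. An $(\LL_1,\ldots,\LL_r)$-free subset $F \subseteq [n]$ corresponds, via its colouring, to a tuple $(I_1,\ldots,I_r) \in \mathcal{I}(\mathcal{H}_1,\ldots,\mathcal{H}_r)$ with $F = \bigcup_i I_i$ and $|F| = \sum_i |I_i|$. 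Since each $A_i$ is irredundant, $e(\mathcal{H}_i) = \Theta(n^{k_i-\ell_i})$, and the definition of $m(A_i)$ under property $(*)$ yields the codegree bounds $\Delta_\ell(\mathcal{H}_i) \leq c\,q^{\ell-1}e(\mathcal{H}_i)/n$ for $q \gtrsim n^{-1/m(A_i)}$. Using $m(A_1) \geq m(A_i)$, a common choice $q \asymp n^{-1/m(A_1)}$ satisfies the hypotheses of Prop~\ref{asymmulti} simultaneously for all~$i$.

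\medskip
First I would establish a multi-colour supersaturation lemma: for every $\eta > 0$ there exists $\alpha > 0$ such that any disjoint tuple $(J_1,\ldots,J_r)$ of subsets of $[n]$ with $\sum_i |J_i| \geq \mu(n,\LL_1,\ldots,\LL_r) + \eta n$ has some $J_i$ with $e(\mathcal{H}_i[J_i]) \geq \alpha n^{k_i-\ell_i}$. This is proved by contradiction via an arithmetic removal lemma applicable to systems defined by matrices with property $(*)$: if every $J_i$ contained fewer than $\alpha n^{k_i-\ell_i}$ solutions to $\LL_i$, one could remove at most $\eta n/(3r)$ elements from each $J_i$ to obtain a disjoint tuple $(I_1,\ldots,I_r)$ with $I_i \subseteq J_i$ and each $I_i$ being $\LL_i$-free, satisfying $\sum_i |I_i| \geq \mu(n,\LL_1,\ldots,\LL_r) + \eta n/3 > \mu(n,\LL_1,\ldots,\LL_r)$, contradicting the extremality of $\mu$.

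\medskip
Next I would apply Prop~\ref{asymmulti} with $\mathcal{F}_i := \{A \subseteq [n] : e(\mathcal{H}_i[A]) \geq \alpha n^{k_i-\ell_i}\}$ and the above $q$, producing container tuples $(C_1,\ldots,C_r)$ with $e(\mathcal{H}_i[C_i]) < \alpha n^{k_i-\ell_i}$ for each~$i$. The key structural observation is that the greedy disjoint decomposition $J_i := C_i \setminus (C_1 \cup \cdots \cup C_{i-1})$ satisfies $J_i \subseteq C_i$ (hence $e(\mathcal{H}_i[J_i]) < \alpha n^{k_i-\ell_i}$) while $\sum_i |J_i| = \bigl|\bigcup_i C_i\bigr|$; the contrapositive of supersaturation then forces $\bigl|\bigcup_i C_i\bigr| < \mu(n,\LL_1,\ldots,\LL_r) + \eta n$. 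Since every $(\LL_1,\ldots,\LL_r)$-free $F \subseteq [n]_p$ is contained in $\bigcup_i C_i$ for some container tuple, $|F| \leq \bigl|\bigcup_i C_i \cap [n]_p\bigr|$. Applying Proposition~\ref{chernoff} to each container tuple and taking a union bound yields $|F| \leq p\mu(n,\LL_1,\ldots,\LL_r) + \delta pn/2$ with high probability; combined with $|[n]_p| = pn(1 \pm o(1))$ this gives the upper bound on the resilience ratio. For the matching lower bound, fix a maximum $(\LL_1,\ldots,\LL_r)$-free $F^* \subseteq [n]$ of size $\mu(n,\LL_1,\ldots,\LL_r)$; since $F^* \cap [n]_p$ is again $(\LL_1,\ldots,\LL_r)$-free, Chernoff yields $|F^* \cap [n]_p|/|[n]_p| \geq \mu(n,\LL_1,\ldots,\LL_r)/n - \delta$ w.h.p.

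\medskip
The main obstacle is the multi-colour supersaturation: the classical Frankl--Graham--R\"odl supersaturation is single-colour, and the disjointness constraint across colours prevents naively summing individual bounds, so we must invoke the arithmetic removal lemma to simultaneously clean each $J_i$. A secondary technical point is the union bound over container tuples, since Prop~\ref{asymmulti} may produce $\exp(O(qn\log n))$ tuples while the Chernoff exponent is only $\exp(-\Omega(\delta^2 pn))$; this is resolved by allowing $C = C(\delta)$ in the statement to be sufficiently large relative to $\delta$ and performing the union bound on fingerprints contained in $[n]_p$, whose size concentrates around $pn$.
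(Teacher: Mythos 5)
Your proposal follows essentially the same route as the paper: build a container theorem for disjoint tuples of $\LL_i$-free sets from Proposition~\ref{asymmulti} and a removal-lemma supersaturation step, use the greedy disjoint decomposition of the containers together with the contrapositive of supersaturation to bound $|\bigcup_i C_i|$, and then finish with a Chernoff/union-bound argument weighted by fingerprints. Two points need more care than your sketch provides.

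First, the container tuple produced by Proposition~\ref{asymmulti} is $C_i = S_i\cup f(S_i)$, and membership $f(S_i)\in\overline{\mathcal F_i}$ controls only the solutions lying entirely inside $f(S_i)$. You also need to bound the number of $k_i$-distinct solutions to $\LL_i$ that use at least one element of the fingerprint $S_i$; this requires a separate estimate (in the paper, Corollary~\ref{matupper2} gives at most $k_i n^{k_i-\ell_i-1}|S_i|$, which is $o(n^{k_i-\ell_i})$ because $|S_i|\le Dqn$ and $m(A_1)>1$). Without that step the claim $e(\mathcal H_i[C_i])<\alpha n^{k_i-\ell_i}$ is not established, and hence neither is $|\bigcup_i C_i|<\mu(n,\LL_1,\ldots,\LL_r)+\eta n$.

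Second, you correctly identify that a naive union bound over all fingerprints fails (there are $\exp(\Theta(qn\log n))$ of them, versus a Chernoff tail of only $\exp(-\Theta(pn))$), but your proposed fix, ``perform the union bound on fingerprints contained in $[n]_p$,'' is too vague as stated, since that collection of fingerprints is random. The mechanism that actually closes the gap is: fix a deterministic fingerprint tuple $S$ of total size $s$, and observe that the event you care about requires $\bigcup_i S_i\subseteq[n]_p$, which occurs with probability $p^s$. Then the union bound becomes $\sum_{s\le DN} r^s\binom{n}{s}p^s e^{-\gamma pn}$, and the pairing $\binom{n}{s}p^s\le(enp/s)^s\le(eC/D)^{DN}$ removes the $\log n$, so that choosing $C$ large relative to $D,\delta,r$ makes the sum $e^{-\Omega(CN)}$. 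Spelling this out is necessary; it is exactly what the paper does.
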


\begin{thm}\label{radonum}
For all positive integers $r$, all irredundant full rank matrices $A_1, \dots, A_r$ which satisfy $(*)$, there are $2^{\mu(n,\LL_1,\dots,\LL_r)+o(n)}$ $(\LL_1,\dots,\LL_r)$-free subsets of $[n]$.
\end{thm}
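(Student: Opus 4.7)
The plan is to apply the asymmetric container result of Proposition~\ref{asymmulti} to the solution hypergraphs of the $\LL_i$ and then invoke the arithmetic removal lemma for systems of linear equations to pin down the exponent. The matching lower bound $2^{\mu(n,\LL_1,\dots,\LL_r)}$ is immediate, since every subset of a maximum $(\LL_1,\dots,\LL_r)$-free set inherits a witnessing colouring and hence is itself $(\LL_1,\dots,\LL_r)$-free.

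For the upper bound, let $k_i$ denote the number of columns of $A_i$ and let $\mathcal{H}_i$ be the $k_i$-uniform hypergraph on $[n]$ whose edges are the $k_i$-distinct solutions to $\LL_i$. Then $(\LL_1,\dots,\LL_r)$-free subsets of $[n]$ correspond precisely to unions $X_1 \cup \dots \cup X_r$ with $(X_1,\dots,X_r) \in \mathcal{I}(\mathcal{H}_1,\dots,\mathcal{H}_r)$. For a small $\eps > 0$, I would take the increasing family $\mathcal{F}_i := \{A \subseteq [n] : |A| \geq \eps n \text{ and } e(\mathcal{H}_i[A]) \geq \eps\, e(\mathcal{H}_i)\}$, so that each $\mathcal{H}_i$ is trivially $(\mathcal{F}_i,\eps)$-dense. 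The $\ell$-degree bounds required by Proposition~\ref{asymmulti} at $p = C n^{-1/m(A_1)}$ follow from property $(*)$ and the definition of $m(A_i)$, as in the proof of Theorem~\ref{radores1}. Proposition~\ref{asymmulti} then yields a fingerprint family $\mathcal{S}$ with $|\mathcal{S}| \leq 2^{O(pn)} = 2^{o(n)}$ and, for each $S \in \mathcal{S}$, a container tuple $(C_1,\dots,C_r) = S \cup f(S)$ capturing every tuple in $\mathcal{I}(\mathcal{H}_1,\dots,\mathcal{H}_r)$. Consequently every $(\LL_1,\dots,\LL_r)$-free subset of $[n]$ is contained in $D_S := C_1 \cup \dots \cup C_r$ for some $S$, so the total count is at most $|\mathcal{S}| \cdot 2^{\max_S |D_S|}$.

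The main obstacle is the \emph{joint} bound $|D_S| \leq \mu(n,\LL_1,\dots,\LL_r) + o(n)$: the coordinates $C_i$ are not required to be disjoint, so a coordinatewise argument only yields $|D_S| \leq \sum_i \mu(n,\LL_i) + o(n)$, which can substantially exceed $\mu(n,\LL_1,\dots,\LL_r)$. To overcome this I would partition $D_S = D_1 \sqcup \dots \sqcup D_r$ with $D_i := C_i \setminus \bigcup_{j < i} C_j \subseteq C_i$, so that $e(\mathcal{H}_i[D_i]) \leq e(\mathcal{H}_i[C_i])$. Because $f(S)_i \in \overline{\mathcal{F}_i}$ for each $i$, we have either $|C_i| \leq \eps n + o(n)$ or $e(\mathcal{H}_i[D_i]) \leq \eps\, e(\mathcal{H}_i) + o(e(\mathcal{H}_i))$; in both cases the arithmetic removal lemma supplies an $\LL_i$-free set $D_i' \subseteq D_i$ with $|D_i \setminus D_i'| = o(n)$ (once $\eps$ is chosen small enough). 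The tuple $(D_1',\dots,D_r')$ then certifies $\bigcup_i D_i'$ as $(\LL_1,\dots,\LL_r)$-free, giving $|D_S| \leq \mu(n,\LL_1,\dots,\LL_r) + o(n)$, and multiplying by $|\mathcal{S}| = 2^{o(n)}$ gives the claimed $2^{\mu(n,\LL_1,\dots,\LL_r)+o(n)}$ upper bound.
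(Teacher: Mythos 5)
Your proof is correct and takes essentially the same approach as the paper: the paper first proves a container theorem for $(\LL_1,\dots,\LL_r)$-free tuples (Theorem~\ref{matcontainer}), whose key clause~(iv)(b) bounds the size of each container union by $\mu(n,\LL_1,\dots,\LL_r)+\delta n$ via the coloured supersaturation lemma (Lemma~\ref{matsupersat}), and then deduces the count exactly as you do. Your partition of $D_S$ into $D_1\sqcup\dots\sqcup D_r$ followed by removal-lemma cleaning of each $D_i$ is precisely the contrapositive phrasing of the paper's greedy colouring of $\bigcup_i f(S_i)$ combined with Lemma~\ref{matsupersat}, so the two arguments are equivalent in substance.
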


Given a system of linear equations $\LL$, a \emph{strongly $\LL$-free} subset of $[n]$ is a subset that contains no solution to $\LL$. Although this is not quite the same definition as $\LL$-free, we remark that
Theorem~\ref{radonum} implies a result of Green~\cite[Theorem 9.3]{G-R} in the case where $k \geq 3$\COMMENT{No mention of $k \geq 3$ in Green result, so think Green's method works for $k=2$ too}, on the number of strongly $\LL$-free subsets of $[n]$ for homogeneous linear equations $\LL$.

{\bf Additional note.}
As mentioned in the introduction, Spiegel~\cite{sp} independently proved the case $r=1$ of Theorem~\ref{radores1}.
(Note in~\cite{sp} this result is mentioned in terms of \emph{abundant} matrices $A$. That is every $\ell \times (k-2)$ submatrix of $A$ has rank $\ell$. But this is clearly equivalent to $(*)$ in the case of  irredundant full rank matrices.)

\subsection{Matrices which satisfy \texorpdfstring{$(*)$}{(*)}}
First we prove that irredundant partition regular matrices are a strict subclass of irredundant matrices which satisfy $(*)$.

Suppose that an irredundant matrix $A$ does not satisfy $(*)$. Then there exists a pair $ij \in \binom{[k]}{2}$ and non-zero rationals $\alpha, \beta$ such that for all solutions $(x_1,\dots,x_k)$ to $\LL$ we have $\alpha x_i=\beta x_j$. If $\alpha=\beta$ then no solution to $\LL$ is $k$-distinct and so $A$ is redundant, a contradiction. Otherwise, without loss of generality, assume that $\alpha > \beta >0$, and devise the following $2$-colouring of $\mathbb{N}$: greedily colour the numbers $\{1,2,3,...\}$ so that when colouring $x$, we always give it a different colour to $\beta x/\alpha$ (if $\beta x/\alpha \in \mathbb{N}$). Such a colouring ensures that no solution to $\LL$ is monochromatic, and so $A$ is not partition regular. 

Note that the converse is not true. An $\ell \times k$ matrix with columns $a^{(1)},\dots,a^{(k)}$ satisfies the \emph{columns property} if there is a partition of $[k]$, say $[k]=D_1 \cup \dots \cup D_t$ such that $$\sum_{i \in D_1} a^{(i)} =0$$ and for every $r \in [t]$ we have $$ \sum_{i \in D_r} a^{(i)} \in \langle a^{(j)} : j \in D_1 \cup \dots \cup D_{r-1} \rangle.$$ Rado's theorem~\cite{rado} states that a matrix is partition regular if and only if it satisfies the columns property. Now, for example $A:=\begin{pmatrix} 2 & 2 & -1 \\ \end{pmatrix}$ is irredundant and clearly satisfies $(*)$, and additionally does not have the columns property, so is not partition regular. 

The argument above actually implies that if an irredundant matrix $A$ is $2$-regular, then it satisfies $(*)$. 
So in the symmetric case, Theorems~\ref{radores1} and~\ref{radonum} consider \emph{all} pairs $(A,r)$ such that $A$ is an irredundant $r$-regular matrix and $r\geq 2$.

\COMMENT{ Given $r \in \mathbb{N}$, if we have a matrix which is not $r$-regular but satisfies $(*)$ and is irredundant, then note that Theorems~\ref{radores1} and~\ref{radonum} hold trivially, since $\mu(n,\LL,r)=n$.}

\subsection{Useful matrix lemmas}
Before we can prove our container result (Theorem~\ref{matcontainer}), we require some matrix lemmas. Note that all of these lemmas hold for irredundant matrices which satisfy $(*)$. As a consequence, Theorem~\ref{radores0} was actually implicitly proven for irredundant matrices which satisfy $(*)$, since in~\cite{random4}  the only necessity of the matrix being partition regular was so that the results stated below could be applied.

Recall the definition of $m(A)$ given by~(\ref{m(A)def}). Parts (i) and (ii) of the following proposition were verified for irredundant partition regular matrices by R\"odl and Ruci\'nski (see Proposition 2.2 in~\cite{random4}). In fact their result easily extends to matrices which satisfy $(*)$. We give the full proof for completeness, and add further facts ((iii)--(v)) which will be useful in the proof of Theorem~\ref{matcontainer}.

\begin{prop}\label{matmL}
Let $A$ be an $\ell \times k$ irredundant matrix of full rank $\ell$ which satisfies $(*)$. Then for every $W \subseteq [k]$, the following hold.
\begin{itemize}
\item[(i)]{If $|W|=1$, then $\rank(A_{\overline{W}})=\ell$.}
\item[(ii)]{If $|W|\geq 2$, then $\ell-\rank(A_{\overline{W}})+2 \leq |W|$.}
\item[(iii)]{If $|W|\geq 2$, then $$-|W|-\rank(A_{\overline{W}}) \leq -\ell-1-\frac{|W|-1}{m(A)}.$$}
\end{itemize}
Furthermore,
\begin{itemize}
\item[(iv)]{$k \geq \ell+2$;}
\item[(v)]{$m(A) > 1$.}
 \end{itemize}
\end{prop}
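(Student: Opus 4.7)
The plan is as follows. Parts (i) and (ii) are where irredundance and $(*)$ are actively used; (iii)--(v) will follow as essentially algebraic consequences of (ii) combined with the definition of $m(A)$.

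For (i), suppose $W=\{i\}$ and $\rank(A_{\overline W})<\ell$. Then the rank drops by exactly one, so the left null space of $A_{\overline W}$ is one-dimensional and contains a non-zero $v\in\mathbb R^\ell$ with $v^T a^{(j)}=0$ for all $j\ne i$; moreover $v^T a^{(i)}\ne 0$, since otherwise $v^T A=0$ would contradict the full row rank of $A$. The linear combination $v^T A$ of the rows of $A$ then has a single non-zero entry, in column $i$, which forces $x_i=0$ in every solution to $\LL$ and contradicts irredundance.

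Part (ii) is the main obstacle; I plan to handle it by an analogous but more delicate dimension count. Assuming $\rank(A_{\overline W}) \le \ell - |W|+1$, the left null space $V$ of $A_{\overline W}$ has $\dim V \ge |W|-1$, and the restriction map $\phi: V\to\mathbb R^W$, $v\mapsto (v^T a^{(p)})_{p\in W}$, is injective by full row rank of $A$, so $U:=\phi(V)$ has codimension at most $1$ in $\mathbb R^W$. The key linear-algebraic claim is that any such $U$ with $|W|\ge 2$ contains a non-zero vector with at most two non-zero coordinates. Pulling such a vector back through $\phi$ produces a non-zero row $v^T A$ (a $\mathbb Q$-linear combination of rows of $A$, hence obtainable by Gaussian elimination) whose support is contained in $W$ and has size $1$ or $2$: size $2$ contradicts $(*)$, while size $1$ forces a coordinate of every solution to vanish, contradicting irredundance. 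The main subtlety lies in verifying the claim when $U$ is a hyperplane $\{c^T x=0\}$: if $c$ has at least two non-zero entries $c_i,c_j$ then $c_j e_i-c_i e_j\in U$ has support of size $2$; if $c$ has exactly one non-zero entry $c_i$, then for $|W|\ge 3$ I take $e_p+e_q$ with $p,q\in W\setminus\{i\}$, whereas for $|W|=2$ the hyperplane is spanned by the coordinate vector $e_j$ with $j\ne i$, giving support $1$---and it is precisely in this corner case that irredundance, rather than $(*)$, must be invoked.

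Finally, (iii) is a direct rearrangement of the definition of $m(A)$: part (ii) ensures that the denominator $|W|-1+\rank(A_{\overline W})-\ell\ge 1$ is positive, so the inequality $(|W|-1)/(|W|-1+\rank(A_{\overline W})-\ell)\le m(A)$ rearranges to the stated form. Applying (ii) with $W=[k]$ (so $\rank(A_{\overline{[k]}})=0$) gives $k\ge \ell+2$, proving (iv); applying (iii) with $W=[k]$ and using (iv) then yields $m(A)\ge (k-1)/(k-1-\ell) > 1$, proving (v).
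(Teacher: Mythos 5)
Your proof is correct throughout, and parts (i), (iii), (iv), (v) proceed essentially as in the paper: (i) by producing, from a rank deficiency of $A_{\overline W}$, a row-space vector of $A$ supported on $W$ (the paper phrases this via Gaussian elimination, you via the left null space, which is the same computation), and (iii)--(v) by routine algebraic rearrangement of the definition of $m(A)$ combined with (ii).

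The genuine difference is in (ii). The paper reduces to the base case $|W|=2$, where a row operation zeroing out $A_{\overline W}$ automatically gives a row with at most two non-zero entries, and then closes by a one-line induction (deleting a column drops rank by at most one). You instead give a direct, non-inductive argument for every $W$ with $|W|\geq 2$: from $\dim V\geq |W|-1$ and injectivity of the restriction $\phi:V\to\mathbb R^W$ you get a subspace $U\subseteq\mathbb R^W$ of codimension at most one, and then you prove a small extra linear-algebra lemma — that any codimension-$\leq 1$ subspace of $\mathbb R^W$ ($|W|\geq 2$) contains a non-zero vector of support at most two — and pull it back to produce the forbidden row. This trades the paper's easy induction for an additional case analysis on the normal vector $c$. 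Both routes are of similar length and use $(*)$ and irredundance in the same spots; one stylistic point in your favor is that the direct argument makes explicit that the ``bad'' linear combination can always be chosen to have support of size at most two, rather than merely contained in $W$. Two small cleanups: for $|W|\geq 3$ and $c$ supported on a single index $i$, the vector $e_p$ ($p\in W\setminus\{i\}$) already has support $1$, so $e_p+e_q$ is unnecessary; and the passage from ``a rational vector in the row space of $A$'' to ``a row obtainable by Gaussian elimination'' (needed to invoke $(*)$) deserves the one-sentence justification you give — any non-zero rational $v$ can be taken as the first row of an invertible rational matrix $E$, so $v^TA$ is a row of $EA$. The paper relies on the same reading of $(*)$, just less explicitly.
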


\begin{proof}
For (i), suppose that $\rank(A_{\overline{W}})=\ell-1$ for some $W \subseteq [k]$ with $|W|=1$. Since $A_{\overline{W}}$ is an $\ell \times (k-1)$ matrix of rank $\ell-1$, under Gaussian elimination it must contain a row of zeroes. Hence $A$ under Gaussian elimination contains a row with at most one non-zero entry. If there is a non-zero entry in this row, then there are no positive solutions to $\LL$, which contradicts $A$ being irredundant. If there are none, then $A$ does not have rank $\ell$, also a contradiction. 

For (ii) proceed by induction on $|W|$. Assume first that there is a $W \subseteq [k]$ with $|W|=2$, such that $\rank(A_{\overline{W}})<\ell$. Using a similar argument to (i), under Gaussian elimination $A$ contains a row with at most two non-zero entries. If there are two non-zero entries this contradicts $A$ satisfying $(*)$. Otherwise we again get a contradiction to either $A$ being irredundant or of rank $\ell$. Assume now that $|W|\geq 3$ and that the statement holds for $|W|-1$. The rank of a matrix drops by at most one when a column is deleted, hence the required inequality follows by induction.

For (iii), note that for $|W| \geq 2$, by definition we have $m(A) \geq (|W|-1)/(|W|-1+\rank(A_{\overline{W}})-\ell)$. This can be rearranged to give the required inequality. For (iv), by taking $W=[k]$ the result follows immediately from (ii). 
For (v), again take $W=[k]$. Then by definition~(\ref{m(A)def})  $m(A) \geq (k-1)/(k-\ell-1) >  1$, where the second inequality follows since the denominator is positive by (iv).
\end{proof}

The following supersaturation lemma follows easily from the (1-colour) removal lemma proved for integer matrices by Kr\'al', Serra and Vena (Theorem 2 in~\cite{ksv2}).


\begin{lemma}\label{matsupersat}
Fix $r \in \mathbb N$ and for each $i \in [r]$, let $A_i$ be an $\ell_i \times k_i$ integer matrix of rank $\ell_i$, and write $\mathcal{L}_i:=\mathcal{L}(A_i)$. For every $\delta >0$ there exist $n_0,\eps > 0$ with the following property. Suppose $n \geq n_0$ is an integer and $X \subseteq [n]$ is $r$-coloured, and $|X| \geq \mu(n,\LL_1,\dots,\LL_r)+\delta n$. Then there exists an $i \in [r]$ such that there are more than $\eps n^{k_i-\ell_i}$ $k_i$-distinct solutions to $\LL_i$ in colour $i$ in $X$.
\end{lemma}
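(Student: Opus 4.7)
The plan is to argue by contradiction, using the integer-matrix removal lemma of Kr\'al', Serra and Vena as the only non-trivial input. Fix $\delta>0$ and an $r$-coloured set $X = X_1 \cup \dots \cup X_r \subseteq [n]$ with $|X| \geq \mu(n,\LL_1,\dots,\LL_r) + \delta n$, where $X_i$ is the set of elements receiving colour $i$. I would assume for contradiction that for each $i \in [r]$ the colour class $X_i$ contains at most $\eps n^{k_i-\ell_i}$ many $k_i$-distinct solutions to $\LL_i$, where $\eps$ is to be chosen.

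The removal lemma of~\cite{ksv2} says: for each full-rank integer matrix $A$ of size $\ell \times k$ and each $\alpha>0$, there exists $\beta = \beta(A,\alpha) > 0$ such that, for all sufficiently large $n$, any $Y \subseteq [n]$ containing at most $\beta n^{k-\ell}$ many $k$-distinct solutions to $\LL(A)$ admits a subset $Y' \subseteq Y$ with $|Y \setminus Y'| \leq \alpha n$ and no $k$-distinct solution to $\LL(A)$ in $Y'$. I would apply this to each $A_i$ with parameter $\alpha := \delta/(2r)$ to obtain constants $\beta_i$; then set $\eps := \min_{i \in [r]} \beta_i$ and take $n_0$ large enough for each of the $r$ applications to be valid.

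Under this choice, the hypothesis yields, for every $i$, a subset $X_i' \subseteq X_i$ with $|X_i \setminus X_i'| \leq \delta n/(2r)$ and no $k_i$-distinct solution to $\LL_i$ in $X_i'$. Putting $X' := \bigcup_{i \in [r]} X_i'$, we have
\[
|X'| \;\geq\; |X| - r \cdot \tfrac{\delta n}{2r} \;=\; |X| - \tfrac{\delta n}{2} \;\geq\; \mu(n,\LL_1,\dots,\LL_r) + \tfrac{\delta n}{2} \;>\; \mu(n,\LL_1,\dots,\LL_r),
\]
while the colouring $X_1',\dots,X_r'$ inherited from $X$ witnesses that $X'$ is $(\LL_1,\dots,\LL_r)$-free. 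This contradicts the extremal definition of $\mu(n,\LL_1,\dots,\LL_r)$, so some $X_i$ must contain more than $\eps n^{k_i-\ell_i}$ many $k_i$-distinct solutions to $\LL_i$, as required.

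The only point that requires a quick check (rather than being an actual obstacle) is that the relevant version of the removal lemma in~\cite{ksv2} is formulated for $k$-distinct solutions, which is indeed the case; the rest of the argument is a pigeonhole-style colour-by-colour application.
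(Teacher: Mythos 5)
Your colour-by-colour reduction to the Kr\'al'--Serra--Vena removal lemma is indeed the approach the paper intends (it says only that the lemma ``follows easily'' from Theorem~2 of~\cite{ksv2}, giving no details). However, the point you flag and then dismiss at the end is a genuine gap, not a formality: Theorem~2 of~\cite{ksv2} is a statement about \emph{all} $k$-tuples $(x_1,\dots,x_k) \in X_1 \times \cdots \times X_k$ satisfying $Ax=0$, with no distinctness restriction. So knowing that a colour class $X_i$ has at most $\eps n^{k_i-\ell_i}$ many $k_i$-\emph{distinct} solutions does not, on its own, verify the hypothesis of the removal lemma; a priori, the degenerate solutions (those with a repeated coordinate) could number $\Omega(n^{k_i-\ell_i})$, in which case the removal lemma is silent.

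The missing ingredient---which the paper itself invokes explicitly in the discussion just before Section~4.6---is that non-$k_i$-distinct solutions are negligible. Either the equation $x_a = x_b$ is a rational linear combination of the rows of $A_ix=0$ for some pair $a\neq b$; then there are no $k_i$-distinct solutions at all, every subset of $[n]$ is $\LL_i$-free, so $\mu(n,\LL_1,\dots,\LL_r)=n$ and the statement is vacuous. Otherwise, for each pair $a\neq b$, adjoining the equation $x_a=x_b$ to $A_ix=0$ increases the rank to $\ell_i+1$, so Lemma~\ref{matupper} (applied to this augmented system) gives at most $n^{k_i-\ell_i-1}$ solutions with $x_a=x_b$; summing over the $\binom{k_i}{2}$ pairs shows that $[n]$ contains $O(n^{k_i-\ell_i-1})=o(n^{k_i-\ell_i})$ non-$k_i$-distinct solutions. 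Hence, once $n$ is large, a colour class with at most $\eps n^{k_i-\ell_i}$ $k_i$-distinct solutions has, say, at most $2\eps n^{k_i-\ell_i}$ solutions of all kinds, and the all-solutions removal lemma now applies (after halving your $\eps$). You should also check the passage from the removal lemma's near-subsets $Y_1,\dots,Y_{k_i}\subseteq X_i$ to the single set $X_i'$ you need: take $X_i':=\bigcap_{j\in[k_i]} Y_j$, which loses at most $k_i\alpha n$ elements (so shrink $\alpha$ by a factor of $\max_i k_i$) and has no solution of any kind, hence certainly no $k_i$-distinct solution, in $(X_i')^{k_i}$. With these repairs your argument is correct and matches the intended proof.
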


Finally we need the following well known result (and a simple corollary of it), which gives a useful upper bound on the number of solutions to a system of linear equations. Note that in this lemma only, we \emph{do not} assume $A$ to be necessarily of full rank (as we will apply the result directly to matrices formed by deleting columns from our original matrix of full rank). 

\begin{lemma}\label{matupper}
 For an $\ell \times k$ matrix $A$ not necessarily of full rank, an $\ell$-dimensional integer vector $b$ and a set $X \subseteq [n]$, the system $A x=b$ has at most $|X|^{k-\rank(A)}$ solutions in $X$. 
\end{lemma}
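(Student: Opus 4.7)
The plan is to reduce the problem to the following standard linear algebra fact: writing $r := \rank(A)$ for brevity, any solution of $Ax = b$ is uniquely determined by its values on a suitable set of $k - r$ ``free'' coordinates, so the count is controlled by the number of ways of choosing these free coordinates from $X$.

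First, I would pick $r$ linearly independent columns of $A$, which exist since $\rank(A) = r$; let $P \subseteq [k]$ be the set of their indices and let $F := [k] \setminus P$, so that $|F| = k - r$. Write $A_P$ and $A_F$ for the submatrices of $A$ consisting of the columns indexed by $P$ and $F$ respectively. The core step is to show that if $x, x' \in \mathbb{Z}^k$ are two solutions of $Ax = b$ with $x_i = x'_i$ for all $i \in F$, then $x = x'$. Setting $y := x - x'$ gives $Ay = 0$ and $y_F = 0$, so $A_P y_P = 0$; since the columns of $A_P$ are linearly independent, this forces $y_P = 0$ and hence $x = x'$.

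Consequently, the restriction map $x \mapsto (x_i)_{i \in F}$ is injective on the set of solutions of $Ax = b$ lying in $X^k$. Its image is a subset of $X^F$, which has cardinality $|X|^{k-r}$, giving the claimed bound.

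There is essentially no obstacle here, as this is a short textbook argument. The only mild care needed is that the parametrisation should be by a subset of coordinates (obtained by selecting a maximal linearly independent set of \emph{columns} of $A$) rather than by row-reduction on $A$, so that the argument directly respects the condition $x \in X^k$.
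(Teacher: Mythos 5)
Your proof is correct and uses essentially the same idea as the paper: both arguments identify $k-\rank(A)$ "free" coordinates and note that the remaining $\rank(A)$ coordinates are then determined, so the solution set injects into $X^{k-\rank(A)}$. The paper reaches this via Gaussian elimination to echelon form, while you select a maximal linearly independent set of columns and phrase the conclusion as injectivity of the restriction map; this is a cosmetic difference rather than a genuinely different route.
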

\begin{proof}
Use Gaussian elimination to turn $A$ into echelon form. Now note that when picking a solution to $Ax=b$ in $X$ (where $x=(x_1,\dots,x_k))$, there are $|X|$ choices for $k-\rank(A)$ of the $x_i$ (the `free' variables), and the other $\rank(A)$ of the $x_i$ are immediately determined. Thus there are at most $|X|^{k-\rank(A)}$ solutions as required.
\end{proof}

\begin{corollary}\label{matupper2}
Consider an $\ell \times k$ matrix $A$ of rank $\ell$, a set $X \subseteq [n]$ and an integer $1\leq t \leq k$.
Fix distinct $y_1,\dots, y_t \in X$  and consider any $W=\{s_1,\dots, s_t\} \subseteq [k]$.  
The system $Ax=0$ has at most $|X|^{k-t-\rank(A_{\overline{W}})}$ solutions $(x_1,\dots, x_k)$ in $X$ 
for which $x_{s_j}=y_j$ for each $j \in [t]$.
 Moreover, if $A$ is irredundant and satisfies $(*)$ and $t=1$, then the system $Ax=0$ has at most $|X|^{k-\ell-1}$ solutions $(x_1,\dots, x_k)$ in $X$ for which $x_{s_1}=y_1$.
\end{corollary}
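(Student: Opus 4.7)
The plan is to reduce the claim to Lemma~\ref{matupper} by fixing the prescribed coordinates and rewriting the system on the remaining coordinates, then invoke Proposition~\ref{matmL}(i) for the moreover part.

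First I would argue as follows. Write $x = (x_1, \dots, x_k)$ and split it according to $W$ and $\overline{W}$: let $x_W := (x_{s_1}, \dots, x_{s_t})$ and let $x_{\overline{W}}$ denote the vector of the remaining $k - t$ coordinates. Since the equation $Ax = 0$ can be rewritten as
\[
A_{\overline{W}}\, x_{\overline{W}} \;=\; -A_W\, x_W,
\]
once we impose the condition $x_{s_j} = y_j$ for each $j \in [t]$ the right-hand side becomes a fixed integer vector $b := -A_W (y_1, \dots, y_t)^{T}$. Thus counting solutions to $Ax = 0$ in $X$ with the prescribed values is the same as counting solutions in $X$ to the system $A_{\overline{W}}\, x_{\overline{W}} = b$ in the $k - t$ unknowns $x_{\overline{W}}$. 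Applying Lemma~\ref{matupper} to this (possibly rank-deficient) system with $k - t$ variables and coefficient matrix of rank $\rank(A_{\overline{W}})$ yields the upper bound $|X|^{(k - t) - \rank(A_{\overline{W}})}$, as required.

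For the moreover part, suppose $A$ is irredundant and satisfies $(*)$ and $t = 1$, so $W = \{s_1\}$ has size one. By Proposition~\ref{matmL}(i) we have $\rank(A_{\overline{W}}) = \ell$. Substituting into the bound just proved gives at most $|X|^{(k - 1) - \ell} = |X|^{k - \ell - 1}$ solutions with $x_{s_1} = y_1$, which is exactly the claimed inequality.

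There is essentially no obstacle here: once one observes that fixing $t$ coordinates simply produces an inhomogeneous linear system in the remaining $k - t$ coordinates whose matrix is $A_{\overline{W}}$, both statements drop out of Lemma~\ref{matupper} (and, for the moreover part, Proposition~\ref{matmL}(i)). The only mild point worth noting is that we do not need (and must not assume) full rank of $A_{\overline{W}}$ for the first part, which is why Lemma~\ref{matupper} was stated without a rank hypothesis.
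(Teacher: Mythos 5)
Your argument is correct and matches the paper's own proof: both fix the coordinates indexed by $W$, rewrite the system as $A_{\overline{W}}\,x_{\overline{W}} = -A_W\,x_W =: b$, invoke Lemma~\ref{matupper} on this $(k-t)$-variable inhomogeneous system, and then use Proposition~\ref{matmL}(i) for the moreover part. No gap; nothing further to compare.
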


\begin{proof}
Write  $A=:(a_{ij})$. Consider  the system of linear equations $A_{\overline{W}}x'=b$ where, for each $r\in [\ell]$, the $r^{\mathrm{th}}$ term in $b$ is
$$b_r:=-\sum_{s_j \in W} a_{rs_j} y_j.$$
 Now by Lemma~\ref{matupper} the system of linear equations $A_{\overline{W}}x'=b$ has at most $|X|^{k-t-\rank(A_{\overline{W}})}$ solutions in $X$. The first part of the corollary then follows since all solutions $(x_1,\dots,x_k)$ to $Ax=b$ with $x_{s_j}=y_j$ for each $j \in [t]$,  rise from a solution $x'$ to $A_{\overline{W}}x'=b$. For the second part, if $A$ is irredundant and  satisfies $(*)$ and $t=1$, then by Proposition~\ref{matmL}(i), we have $\rank(A_{\overline{W}})=\ell$ and so the result follows.
\end{proof}

\subsection{A container theorem for tuples of \texorpdfstring{$\LL$}{L}-free sets}
Recall that an $\LL$-free set is simply an $(\LL,1)$-free set.
Let $\mathcal I(n,\LL_1,\dots,\LL_r)$ denote the set of all ordered $r$-tuples $(X_1,\dots,X_r)\in \mathcal P([n])^r$ so that each $X_i$ is $\LL_i$-free and $X_i\cap X_j=\emptyset$ for all distinct $i,j \in [r]$.
Note that any $(\LL_1,\dots,\LL_r)$-free subset $X$ of $[n]$ has a partition $X_1,\dots,X_r$ so that $(X_1,\dots,X_r)\in \mathcal I(n,\LL_1,\dots,\LL_r)$.
We now prove a container theorem for the elements of $\mathcal I(n,\LL_1,\dots,\LL_r)$. 

\begin{thm}\label{matcontainer}
Let $r \in \mathbb N$ and $0<\delta <1$. For each $i \in [r]$ let $A_i$ be an $\ell_i \times k_i$ irredundant matrix of full rank $\ell_i$ which satisfies $(*)$, and suppose that $m(A_1) \geq \dots \geq m(A_r)$. Then there exists $D>0$ such that the following holds. For all $n \in \mathbb{N}$, there is a collection $\mathcal S_r\subseteq
\mathcal P([n])^r$  and a function $f:\mathcal S_r \rightarrow \mathcal P([n])^r $  such that:

\begin{itemize}
\item[(i)]{For all $(I_1,\dots,I_r)\in \mathcal I(n,\LL_1,\dots,\LL_r)$, there exists $S \in \mathcal S_r$ such that $S \subseteq (I_1,\dots,I_r) \subseteq f(S)$.}
\item[(ii)] If $(S_1,\dots,S_r) \in \mathcal S_r$ then $\sum_ {i \in [r]}|S_i|\leq Dn^{\frac{m(A_1)-1}{m(A_1)}}$.
\item[(iii)] Every $S \in \mathcal S_r$ satisfies $S \in \mathcal I(n,\LL_1,\dots,\LL_r)$.
\item[(iv)] Given any $S=(S_1,\dots,S_r) \in \mathcal S_r$, write $f(S)=:(f(S_1),\dots,f(S_r))$. Then
\begin{itemize}
\item[(a)] for each $1\leq i \leq r$, $f(S_i)$ contains at most $\delta n^{k_i-\ell_i}$ $k_i$-distinct  solutions to $\LL_i$; and
\item[(b)] $|\cup _{i \in [r]} f(S_i)|\leq \mu(n,\LL_1,\dots,\LL_r)+\delta n$.
\end{itemize}
\end{itemize}
\end{thm}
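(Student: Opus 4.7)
The plan is to apply Proposition~\ref{asymmulti} to the $r$-tuple of hypergraphs $\mathcal{H}_1,\dots,\mathcal{H}_r$ on common vertex set $[n]$, where $\mathcal{H}_i$ is the $k_i$-uniform hypergraph whose edges are precisely the $k_i$-subsets of $[n]$ that arise as the support of some $k_i$-distinct solution to $\LL_i$. Under this correspondence one has $\mathcal{I}(\mathcal{H}_1,\dots,\mathcal{H}_r) = \mathcal{I}(n,\LL_1,\dots,\LL_r)$, so any conclusion of Proposition~\ref{asymmulti} directly concerns the objects we wish to control.

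For the bad family, take $\mathcal{F}_i$ to consist of those $A \subseteq [n]$ with $|A| \geq \eps n$ that contain at least $\eps_1 n^{k_i-\ell_i}$ $k_i$-distinct solutions to $\LL_i$, where $\eps,\eps_1 > 0$ are small constants depending on $\delta$ and on the $A_j$. This family is increasing and satisfies the size lower bound by construction. The density condition $e(\mathcal{H}_i[A]) \geq \eps' e(\mathcal{H}_i)$ for $A\in\mathcal{F}_i$ is then immediate since $e(\mathcal{H}_i[A]) \geq \eps_1 n^{k_i-\ell_i}/k_i!$, while $e(\mathcal{H}_i) \leq n^{k_i-\ell_i}$ by Lemma~\ref{matupper}. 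For the degree condition at $p = n^{-1/m(A_1)}$, Corollary~\ref{matupper2} together with Proposition~\ref{matmL}(iii) yields $\Delta_\ell(\mathcal{H}_i) = O(n^{k_i-\ell_i-1-(\ell-1)/m(A_i)})$ for $\ell \geq 2$, with the case $\ell = 1$ following from Proposition~\ref{matmL}(i). The ordering $m(A_i) \leq m(A_1)$ is exactly what allows the single value $p = n^{-1/m(A_1)}$ to simultaneously work for every $\mathcal{H}_i$. A matching lower bound $e(\mathcal{H}_i) = \Omega(n^{k_i-\ell_i})$, obtained from irredundancy and a lattice-point count, guarantees that the right-hand side of the degree condition is of the correct order.

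Feeding all of this into Proposition~\ref{asymmulti} produces $\mathcal{S}_r$ together with $f$ and $g$ that immediately deliver (i), (iii), and (ii) (with $D$ coming from Proposition~\ref{asymmulti}). For (iv)(a), membership $f(S_i) \in \overline{\mathcal{F}_i}$ means either $|f(S_i)| < \eps n$, in which case Lemma~\ref{matupper} bounds its number of $k_i$-distinct solutions by $(\eps n)^{k_i-\ell_i}$, or $f(S_i)$ directly contains fewer than $\eps_1 n^{k_i-\ell_i}$ solutions. Choosing $\eps,\eps_1$ small enough in terms of $\delta$ (and using $k_i \geq \ell_i+2$ from Proposition~\ref{matmL}(iv) so that $(\eps n)^{k_i-\ell_i}$ is genuinely $o(n^{k_i-\ell_i})$) makes both quantities at most $\delta n^{k_i-\ell_i}$.

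The main obstacle is (iv)(b): the containers $f(S_1),\dots,f(S_r)$ need not be pairwise disjoint, so the individual bounds from (iv)(a) do not at once yield a bound on $|\bigcup_i f(S_i)|$. The plan is to argue by contradiction through supersaturation. Assume $|\bigcup_i f(S_i)| > \mu(n,\LL_1,\dots,\LL_r) + \delta n$ and $r$-colour each vertex $v$ of the union by the smallest index $i$ with $v \in f(S_i)$. Applying Lemma~\ref{matsupersat} to this coloured set produces an index $i$ for which the colour-$i$ class, and hence $f(S_i)$ itself, contains more than $\eps_1 n^{k_i-\ell_i}$ $k_i$-distinct solutions to $\LL_i$. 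This contradicts $f(S_i) \in \overline{\mathcal{F}_i}$, provided $\eps_1$ is taken to match the constant output by Lemma~\ref{matsupersat} and $\eps$ is chosen small enough that the ``$|f(S_i)| < \eps n$'' escape route is ruled out by Lemma~\ref{matupper}. The care needed in coupling these four constants $(\delta,\eps,\eps_1,D)$ is the delicate part of the argument.
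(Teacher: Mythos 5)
Your overall strategy mirrors the paper's exactly: build the hypergraph of $k_i$-distinct solutions, verify the degree conditions via Corollary~\ref{matupper2} and Proposition~\ref{matmL}(iii), use the ordering $m(A_1)\geq\cdots\geq m(A_r)$ to justify a single choice of $p$, feed everything into Proposition~\ref{asymmulti}, and then deduce (iv)(b) from (iv)(a) via the supersaturation Lemma~\ref{matsupersat} applied to the smallest-index colouring of $\bigcup_i f(S_i)$. Your $\mathcal{F}_i$ differs cosmetically from the paper's (you impose the size lower bound as part of the definition rather than deriving it from the density condition), but this is a harmless variant.

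There is, however, one genuine gap at the transition from Proposition~\ref{asymmulti} to conclusion (iv). Proposition~\ref{asymmulti}(ii) gives $S \subseteq (I_1,\dots,I_r) \subseteq S\cup f(S)$, so to satisfy Theorem~\ref{matcontainer}(i) you are forced to take the new $f(S) := S\cup f'(S)$, where $f'$ is Proposition~\ref{asymmulti}'s function. But then $f(S_i) = S_i\cup f'(S_i)$ does \emph{not} lie in $\overline{\mathcal{F}_i}$ in general; only $f'(S_i)$ does. Your argument for (iv)(a) (and hence (iv)(b)) explicitly invokes ``membership $f(S_i)\in\overline{\mathcal{F}_i}$'', which is false for this $f$. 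You must instead bound the number of $k_i$-distinct solutions in $f(S_i)$ by splitting into those entirely inside $f'(S_i)$ (controlled by $\overline{\mathcal{F}_i}$-membership as you say) plus those using at least one element of $S_i$. The latter requires an additional estimate: by Corollary~\ref{matupper2}, each element lies in at most $k_i n^{k_i-\ell_i-1}$ such solutions, and by Proposition~\ref{asymmulti}(i) we have $|S_i|\leq Dn^{1-1/m(A_1)}=o(n)$ (using $m(A_1)>1$ from Proposition~\ref{matmL}(v)), so the contribution is $o(n^{k_i-\ell_i})$ and absorbs into $\delta n^{k_i-\ell_i}$. This computation is what the paper does and what your proposal omits; without it (iv)(a) and (iv)(b) do not follow.
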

We emphasise that (iv)(b) does not necessarily guarantee $\sum  _{i \in [r]} |f(S_i)|\leq \mu(n,\LL_1,\dots,\LL_r)+\delta n$. Rather it ensures at most $\mu(n,\LL_1,\dots,\LL_r)+\delta n$ elements of $[n]$ appear in at least one of the co-ordinates of $f(S)$.
This property is crucial for our applications.

\begin{proof}
First note that since each of the matrices $A_i$ are irredundant, a result of Janson and Ruci\'nski~\cite{JR} implies that there exists a constant $d>0$ such that, for each $i \in [r]$, there are at least $dn^{k_i-\ell_i}$ $k_i$-distinct solutions to $\LL_i$ in $[n]$. 

Note that it suffices to prove the theorem in the case when $0 < \delta < d$. Also, it suffices to prove the theorem when $n$ is sufficiently large; otherwise we can set $\mathcal S_r$ to be $\mathcal I(n,\LL_1,\dots,\LL_r)$; set $f$ to be the identity function and choose $D$ to be large. 

Let $0<\delta <d$ and $r \in \mathbb N$ be given and apply Lemma~\ref{matsupersat} to obtain $n_0,\eps>0$. Without loss of generality we may assume $\eps \leq \delta$. Define $k:=\max k_i$ and 
let $$\eps':=\frac{\eps}{2} \quad\text{and}\quad c:=\frac{k!}{\eps'}.$$
 Apply Proposition~\ref{asymmulti} with parameters $r, k_1, \dots, k_r, c,\eps'$ playing the roles of $r, k_1, \dots, k_r, c,\eps$ respectively to obtain $D>0$. Increase $n_0$ if necessary so that $0<1/n_0 \ll 1/D, 1/k_1, \dots, 1/k_r, 1/r,\eps, \delta$ and let $n \geq n_0$ be an integer. 
 
For each $i \in [r]$ let $\mathcal{H}_{n,i}$ be the hypergraph with $V(\mathcal{H}_{n,i}):=[n]$ and an edge set which consists of all $k_i$-distinct solutions to $\LL_i$ in $[n]$. Observe that $\mathcal{H}_{n,i}$ is $k_i$-uniform and an independent set in $\mathcal{H}_{n,i}$ is an $\LL_i$-free set.

For each $i \in [r]$ we define $\mathcal{F}_{n,i}:=\{F \subseteq V(\mathcal{H}_{n,i}): e(\mathcal{H}_{n,i}[F]) \geq \eps' e(\mathcal{H}_{n,i})\}$. Note that since $\eps'<d$, we have 
\begin{align}\label{edgemat}
\eps'n^{k_i-\ell_i} \leq e(\mathcal{H}_{n,i}).
\end{align}
We claim that $\mathcal{H}_{n,i}$ and $\mathcal{F}_{n,i}$ satisfy the hypotheses of Proposition~\ref{asymmulti} with parameters chosen as above with $$p=p(n):=n^{-1/m(A_1)}.$$

Clearly $\mathcal{F}_{n,i}$ is increasing and $\mathcal{H}_{n,i}$ is $(\mathcal{F}_{n,i},\eps')$-dense. By Lemma~\ref{matupper}, a set $F \subseteq V(\mathcal{H}_{n,i})$ contains at most $|F|^{k_i-\ell_i}$ solutions to $\LL_i$ (so $e(\mathcal{H}_{n,i}[F]) \leq |F|^{k_i-\ell_i}$). 
Hence for all $F \in \mathcal{F}_{n,i}$, we have $$|F| \geq e(\mathcal{H}_{n,i}[F])^{\frac{1}{k_i-\ell_i}} \geq (\eps' e(\mathcal{H}_{n,i}))^{\frac{1}{k_i-\ell_i}} \stackrel{(\ref{edgemat})}{\geq} ((\eps')^2 n^{k_i-\ell_i})^{\frac{1}{k_i-\ell_i}} \geq \eps' n$$ where the last inequality follows by Proposition~\ref{matmL}(iv).

For each $j \in [k_i]$, we wish to bound the number of hyperedges containing some $\{y_1,\dots,y_j\}\subseteq V(\mathcal{H}_{n,i})$.
Suppose $(x_1,\dots, x_{k_i})$ is a $k_i$-distinct solution to $\LL_i$ so that $\{y_1,\dots, y_j\}\subseteq \{x_1,\dots,x_{k_i}\}$.
There are ${k_i}!/({k_i}-j)!$ choices for picking the $j$ roles the $y_i$ play in $(x_1,\dots, x_{k_i})$. Let $W$ be one such choice for the set of indices of the $x_a$ used by $\{y_1,\dots,y_j\}$.
In this case, Corollary~\ref{matupper2} implies there are at most $n^{{k_i}-j-\rank((A_i)_{\overline{W}})}$ such solutions to $\LL_i$, and if $j=1$, there are at most $n^{k_i-\ell_i-1}$ such solutions.
So for $j=1$ this yields 
\begin{align*}
\deg_{\mathcal{H}_{n,i}}(y_1) \leq k_i n^{k_i-\ell_i-1} \stackrel{(\ref{edgemat})}{\leq} \frac{k_i}{\eps'} \frac{e(\mathcal{H}_{n,i})}{v(\mathcal{H}_{n,i})} \leq  c \frac{e(\mathcal{H}_{n,i})}{v(\mathcal{H}_{n,i})}.
\end{align*}
 For $j\geq 2$, by Proposition~\ref{matmL}(iii) we have $k_i-j-\rank((A_i)_{\overline{W}}) \leq k_i-\ell_i-1-(j-1)/m(A_i)$. Also $m(A_1) \geq m(A_i)$ for all $i \in [r]$ and hence we have 
\begin{align*}
\deg_{\mathcal{H}_{n,i}}(\{y_1,\dots,y_j\}) & \leq k_i! n^{k_i-\ell_i-1-\frac{j-1}{m(A_i)}} \leq k_i! n^{k_i-\ell_i-1-\frac{j-1}{m(A_1)}} \\
& \leq \frac{k_i!}{\eps'} p^{j-1} \frac{e(\mathcal{H}_{n,i})}{v(\mathcal{H}_{n,i})} \leq cp^{j-1} \frac{e(\mathcal{H}_{n,i})}{v(\mathcal{H}_{n,i})}.
\end{align*}
Since $\{y_1,\dots,y_j\}$ was arbitrary, we therefore have $\Delta_j(\mathcal{H}_{n,i}) \leq cp^{j-1}e(\mathcal{H}_{n,i})/v(\mathcal{H}_{n,i})$, as required. 
We have therefore shown that $\mathcal{H}_{n,i}$ and $\mathcal{F}_{n,i}$ satisfy the hypotheses of Proposition~\ref{asymmulti} for all $i \in [r]$. 

\medskip

Then Proposition~\ref{asymmulti} implies that there exists a family $\mathcal{S}_r \subseteq \prod_{i \in [r]}\mathcal{P}(V(\mathcal{H}_{n,i})) = \mathcal{P}([n])^r$ and  functions $f':\mathcal{S}_r \to \prod_{i \in [r]}\overline{\mathcal{F}_{n,i}}$ and $g:\mathcal{I}(\mathcal{H}_{n,1},\ldots,\mathcal{H}_{n,r}) \to \mathcal S_r$
such that the following conditions hold:
\begin{itemize}
\item[(a)] If $(S_1,\dots, S_r) \in \mathcal S_r$ then $\sum_{i\in [r]} |S_i|\leq Dpn$;
\item[(b)] every $S \in \mathcal{S}_r$ satisfies $S \in \mathcal{I}(\mathcal{H}_{n,1},\ldots,\mathcal{H}_{n,r})$;
\item[(c)] for every $(I_1,\dots,I_r) \in \mathcal{I}(\mathcal{H}_{n,1},\ldots,\mathcal{H}_{n,r})$, we have  that $S \subseteq (I_1,\dots,I_r) \subseteq S\cup f'(S)$, where $S:=g( I_1,\dots, I_r)$.
\end{itemize}

Note that $\mathcal{I}(\mathcal{H}_{n,1},\ldots,\mathcal{H}_{n,r})=\mathcal I(n,\LL_1,\dots,\LL_r)$. For each $S\in \mathcal S_r$, define 
$$f(S):=S\cup f'(S).$$
So $f:\mathcal S_r \rightarrow \mathcal P([n])^r $. Thus, (a)--(c) immediately imply that (i)--(iii)  hold.

Given any $S=(S_1,\dots,S_r)\in \mathcal S_r$ write $f(S)=:(f(S_1),\dots,f(S_r))$ and  $f'(S)=:(f'(S_1),\dots,f'(S_r))$. (Note the slight abuse of the use of the $f$ and $f'$ notation here.)
By definition of $\mathcal{F}_{n,i}$ any $F \in \overline{\mathcal{F}_{n,i}}$ contains at most $\eps ' n^{k_i-\ell_i}$ $k_i$-distinct solutions to $\LL_i$. 
By Corollary~\ref{matupper2}, the number of $k_i$-distinct solutions to $\LL_i$ in $[n]$ that use at least one element from $S_i$ is at most $k_i n^{k_i-\ell_i-1} |S_i|$.
Further,
$$k_i n^{k_i-\ell_i-1} |S_i| \leq k_i Dpn^{k_i-\ell_i} \leq \eps ' n^{k_i-\ell_i}.$$ 
Here, the first inequality holds by (a), and the second  since $p=n^{-1/m(A_1)}$ and $m(A_1)>0$ by Proposition~\ref{matmL}(v). 
Thus, in total $f(S_i)=S_i\cup f'(S_i)$ contains at most $2\eps ' n^{k_i-\ell_i}\leq \delta n^{k_i-\ell_i}$  $k_i$-distinct solutions to $\LL_i$, so (iv)($a$) holds.

In fact, the argument above
 implies that there is an $r$-colouring of the set 
$\cup _{i \in [r]} f(S_i)$ so that there are at most $2\eps' n^{k_i-\ell_i}=\eps  n^{k_i-\ell_i}$ $k_i$-distinct solutions to $\LL_i$ in colour $i$, in $\cup _{i \in [r]} f(S_i)$.
  Hence, Lemma~\ref{matsupersat} ensures (iv)($b$), as desired. 
\end{proof}

\subsection{The number of \texorpdfstring{$(\LL_1,\dots,\LL_r)$}{(\LL_1,\dots,\LL_r)}-free subsets of \texorpdfstring{$[n]$}{[n]}}
Our first application of Theorem~\ref{matcontainer} yields an enumeration result (Theorem~\ref{radonum}) for the number of $(\LL_1,\dots,\LL_r)$-free subsets of $[n]$.


\begin{proof}[Proof of Theorem~\ref{radonum}]
By definition of $\mu(n,\LL_1,\dots,\LL_r)$ there are at least $2^{\mu(n,\LL_1,\dots,\LL_r)}$ $(\LL_1,\dots,\LL_r)$-free subsets of $[n]$. So it suffices to prove the upper bound. 

For this, note that we may assume $n$ is sufficiently large.
Let $0<\delta<1$ be arbitrary and let $D>0$ be obtained from Theorem~\ref{matcontainer} applied to $A_1,\dots,A_r$ with parameter $\delta$. We obtain a collection $\mathcal S_r$ and function $f$ as in Theorem~\ref{matcontainer}.
Consider any $(\LL_1,\dots,\LL_r)$-free subset $X$ of $[n]$. Note that $X$ has a partition $X_1,\dots,X_r$ so that $(X_1,\dots,X_r)\in \mathcal I(n,\LL_1,\dots,\LL_r)$. So by Theorem~\ref{matcontainer}(i) this means there is some $S=(S_1,\dots,S_r)\in \mathcal S_r$
so that $X \subseteq \cup _{i \in [r]} f(S_i)$. 

Further, given any $S=(S_1,\dots,S_r)\in \mathcal S_r$, we have that $|\cup _{i \in [r]} f(S_i)|\leq \mu(n,\LL_1,\dots,\LL_r)+\delta n$. Thus, each such $\cup _{i \in [r]} f(S_i)$ contains at most $2^{\mu(n,\LL_1,\dots,\LL_r)+\delta n}$ $(\LL_1,\dots,\LL_r)$-free subsets of $[n]$.
Note that, by Theorem~\ref{matcontainer}(ii),
$$|\mathcal S_r|\leq \left(\sum_{s=0}^{Dn^{\frac{m(A_1)-1}{m(A_1)}}} \binom{n}{s}\right)^r<2^{\delta n},$$
where the last inequality holds since $n$ is sufficiently large.%

Altogether, this implies that the number of $(\LL_1,\dots,\LL_r)$-free subsets  of $[n]$ is at most
$$2^{\delta n} \times 2^{\mu(n,\LL_1,\dots,\LL_r)+\delta n}= 2^{\mu(n,\LL_1,\dots,\LL_r)+2\delta n}.$$
Since the choice of $0<\delta<1$ was arbitrary this proves the theorem.
\end{proof}

\subsection{The resilience of being \texorpdfstring{$(\LL_1,\dots,\LL_r)$}{(\LL_1,\dots,\LL_r)}-Rado}  
Recall that the resilience of $S$ with respect to $\mathcal{P}$, $\res(S,\mathcal{P})$, is the minimum number $t$ such that by deleting $t$ elements from $S$, one can obtain a set not having $\mathcal{P}$. In this section we will determine $\res([n]_p,(\LL_1,\dots,\LL_r)\text{-Rado})$ for irredundant matrices $A_1,\dots,A_r$ which satisfy $(*)$. We now use Theorem~\ref{matcontainer} to deduce Theorem~\ref{radores1}.

\begin{proof}[Proof of Theorem~\ref{radores1}]
Let $0<\delta<1$, $r \in \mathbb N$ and $A_1,\dots,A_r$ be matrices as in the statement of the theorem. 
Given $n$, if $p > n^{-1/m(A_1)}$ then since $m(A_1)>1$ by  Proposition~\ref{matmL}(v), Proposition~\ref{chernoff} implies that, w.h.p.,
\begin{align}\label{concmat}
|[n]_p| = \left(1 \pm \frac{\delta}{4} \right) pn.
\end{align}
We first show that 
$$
\lim_{n \rightarrow \infty} \mathbb{P}\left[ \frac{\res([n]_p,(\LL_1,\dots,\LL_r)\text{-Rado})}{|[n]_p|}\leq 1-\frac{\mu(n,\LL_1,\dots,\LL_r)}{n}+ \delta   \right] = 1 \quad\text{ if } p > n^{-1/m(A_1)}.
$$  
For this, we must show that the probability of the event that there exists a set $S \subseteq [n]_p$ such that $|S| \geq (\mu(n,\LL_1,\dots,\LL_r)/n-\delta)|[n]_p|$ and $S$ is $(\LL_1,\dots,\LL_r)$-free, tends to one as $n$ tends to infinity. This indeed follows: 
Let $T$ be an $(\LL_1,\dots,\LL_r)$-free subset of $[n]$ of maximum size $\mu(n,\LL_1,\dots,\LL_r)$. Then, by Proposition~\ref{chernoff}, w.h.p. we have $|T \cap [n]_p|=(\mu(n,\LL_1,\dots,\LL_r)/n \pm \delta)|[n]_p|$, and $T \cap [n]_p$ is $(\LL_1,\dots,\LL_r)$-free, as required.\COMMENT{This even works in
$\mu$ is sublinear}

For the remainder of the proof, we will focus on the lower bound, namely that there exists $C>0$ such that whenever $p>Cn^{-1/m(A_1)}$,
\begin{align}\label{lab}
 \mathbb{P}\left[ \res([n]_p,(\LL_1,\dots,\LL_r)\text{-Rado}) \geq \left( 1-\frac{\mu(n,\LL_1,\dots,\LL_r)}{n} - \delta \right) |[n]_p| \right] \to 1 \quad\text{ as } n \to \infty.
\end{align}

Suppose $n$ is sufficiently large.
Apply Theorem~\ref{matcontainer} with parameters $r,\delta/8,A_1,\dots,A_r$ to obtain $D>0$,  a collection $\mathcal S_r\subseteq \mathcal P([n])^r$ and a function $f$ satisfying (i)--(iv). Now choose $
C$ such that $0<  1/C \ll 1/D, \delta,1/r$. 
Let $p \geq Cn^{-1/m(A_1)}$.

Since (\ref{concmat}) holds with high probability, 
to prove (\ref{lab}) holds it suffices to show that the probability $[n]_p$ contains an $(\LL_1,\dots,\LL_r)$-free subset of size at least $(\frac{\mu(n,\LL_1,\dots,\LL_r)}{n} +\delta /2 )np$ tends to zero as $n$ tends to infinity.

Suppose  that $[n]_p$ 
does contain an $(\LL_1,\dots,\LL_r)$-free subset $I$ of size at least $(\frac{\mu(n,\LL_1,\dots,\LL_r)}{n} +\delta /2)np$. Note that $I$ has a partition $I_1,\dots, I_r$ so that $(I_1,\dots, I_r)\in \mathcal I(n,\LL_1,\dots,\LL_r)$.
Further, there is some $S=(S_1,\dots,S_r) \in \mathcal S_r$ such that $S\subseteq (I_1,\dots, I_r) \subseteq f(S)$. 
Thus, $[n]_p$ must contain $\cup _{i\in [r]}S_i$ as well as at least $(\frac{\mu(n,\LL_1,\dots,\LL_r)}{n} +\delta /4)np$ elements from $\left(\cup _{i\in [r]}f(S_i)\right )\setminus \left (\cup _{i\in [r]}S_i \right)$. (Note here we are using that $|\cup _{i\in [r]}S_i|\leq \delta np/4$, which holds by Theorem~\ref{matcontainer}(ii) and since $0<  1/C \ll 1/D, \delta$.) Writing $s:=|\cup _{i\in [r]}S_i|$,
the probability $[n]_p$ contains $\cup _{i\in [r]}S_i$ is $p^s$. 
Note that $|\left(\cup _{i\in [r]}f(S_i)\right )\setminus \left (\cup _{i\in [r]}S_i \right)|\leq \mu (n,\LL_1,\dots,\LL_r) +\delta n/8$ by Theorem~\ref{matcontainer}(iv)(b).
So  by the first part of
Proposition~\ref{chernoff},
the probability $[n]_p$ contains at least  $(\frac{\mu(n,\LL_1,\dots,\LL_r)}{n} +\delta /4)np$ elements from $\left(\cup _{i\in [r]}f(S_i)\right )\setminus \left (\cup _{i\in [r]}S_i \right )$, is 
 at most $\exp (-\delta ^2 np/256)$.

Write $N:=n^{(m(A_1)-1)/m(A_1)}$ and $\gamma := \delta ^2 /256$.
Given some $0\leq s\leq DN$, there are at most $r^s \binom{n}{s}$ elements $(S_1,\dots,S_r) \in \mathcal S_r$ such that $|\cup _{i\in [r]}S_i| =s$. Indeed, this follows since there are $r^s$ ways to partition a set of size $s$ into $r$ classes. (Note we only need to consider $s \leq DN$ by Theorem~\ref{matcontainer}(ii).)
Thus, the probability $[n]_p$ 
does contain an $(\LL_1,\dots,\LL_r)$-free subset $I$ of size at least $(\frac{\mu(n,\LL_1,\dots,\LL_r)}{n} +\delta /2)np$ is at most
\begin{align*}
\sum _{s=0} ^{DN} r^s \binom{n}{s} \cdot  p^s \cdot e^{-\gamma np}  & \leq  (DN+1)(rp)^{DN}\binom{n}{DN} e^{-\gamma np} 
 \leq (DN+1)\left (\frac{repn }{DN}\right )^{DN} e^{-\gamma np} \\ & \leq  (DN+1)\left (\frac{reC }{D}\right )^{DN} e^{-\gamma  CN} 
\leq e^{\gamma CN/2} e^{-\gamma CN} = e^{-\gamma CN/2} 
\end{align*}
which tends to zero as $n$ tends to infinity. This completes the proof.
\COMMENT{Calculation of $\exp(\delta^2 np/256)$: Let $X:=|[n]_p \cap ( ( \cup _{i\in [r]} f(S_i) ) \setminus ( \cup _{i\in [r]} S_i )) |$ and $\lambda:=\delta np/8$. We have $\mathbb{E}[X] \leq p(\mu(n,\LL_1,\dots,\LL_r)+\delta n/8)$, so we have
\begin{align*}
& \mathbb{P} \left[ X \geq \left( \frac{\mu(n,\LL_1,\dots,\LL_r)}{n}+\frac{\delta}{4} \right)np \right] 
\leq \mathbb{P} \left[X \geq \mathbb{E}[X] + \lambda \right] 
\leq \exp \left( - \frac{\lambda^2}{2(\mathbb{E}[X]+\lambda/3)} \right) \\
\leq & \exp \left(-\frac{(\delta np/8)^2}{2(\mu(n,\LL_1,\dots,\LL_r)/n+\delta/8)np+\delta np/24} \right) = \exp \left (-\frac{\delta^2 np}{64(2(\mu(n,\LL_1,\dots,\LL_r)/n+\delta/8)+\delta/24)} \right) \leq \exp \left( -\frac{\delta^2 np}{256} \right). \\
\end{align*}}

\COMMENT{Justifications of inequalities in the main calculation. 
\\
(1st inequality) Use of well known $\binom{n}{s} \leq (ne/s)^s$.
\\
(2nd inequality) Observe that, when $0 \leq s \leq DN$,
\begin{align*}
\frac{\partial}{\partial s} \log \left( \frac{repn}{s} \right)^s = \frac{\partial}{\partial s} s\log (repn) - \frac{\partial}{\partial s} s\log s = \log \left( \frac{repn}{s} \right) - 1 \geq \log \left( \frac{reC}{D} \right)-1= \log \left( \frac{rC}{D} \right)>0
\end{align*}
where the last inequality holds since $1/C \ll 1/D$. Thus $\log (repn/s)^s$ and hence $(repn/s)^s$ is an increasing function of $s$ for $0 \leq s \leq DN$.
\\
(3rd inequality) Observe that $p^{DN}e^{-\gamma pn}$ is a decreasing function of $p$ as 
\begin{align*}
& \frac{\partial}{\partial p} \left(p^{DN}e^{-\gamma pn}\right) = DN p^{DN-1} e^{-\gamma pn} - \gamma n p^{DN}e^{-\gamma pn} \\
= & \, p^{DN-1} e^{-\gamma pn} (DN-\gamma pn) \leq p^{DN-1} e^{-\gamma pn} N(D- \gamma C)<0 \nonumber
\end{align*}
since $1/C \ll 1/D,\delta$.
\\
(4th inequality) Again since $1/C \ll 1/D,\delta$.}
\end{proof}

\subsection{The size of the largest \texorpdfstring{$(\LL,r)$}{(L,r)}-free set}\label{secaw}
Both as a natural question in itself, and in 
 light of Theorems~\ref{radores1} and~\ref{radonum}, it is of interest to obtain  good bounds on $\mu(n,\LL_1,\dots,\LL_r)$. For the rest of this section consider the symmetric case ($A:=A_1=\dots=A_r$) and assume that $A$ is a $1 \times k$ matrix, i.e.~we are interested in solutions to a linear equation $a_1 x_1+\dots+a_k x_k=0$. Such $\LL$ are called \emph{translation-invariant} if the coefficients $a_i$ sum to zero. It is known that $\mu(n,\LL,1)=o(n)$ if $\LL$ is translation-invariant and  $\mu(n,\LL,1)=\Omega(n)$ otherwise (see~\cite{ruzsa}). Determining exact bounds remains open in many cases, famously including \emph{progression-free sets} (where $\LL$ is $x+y=2z$). See~\cite{bloom,elk,greenwolf} for the state-of-the-art lower and upper bounds for this case. 

Call $S \subseteq [n]$ \emph{strongly $(\LL,r)$-free} if there exists an $r$-colouring of $S$ which contains no monochromatic solutions to $\LL$ of any type  (that is, solutions are not required to be $k$-distinct). Define $\mu^{*}(n,\LL,r)$ to be the size of the largest strongly $(\LL,r)$-free subset $S \subseteq [n]$. Note that for any density regular matrix $A$, $(x,\dots,x)$ is a solution to $\LL$ for all $x \in [n]$ (as observed by  Frankl, Graham and R\"odl~\cite[Fact 4]{fgr}) and so we have $\mu^{*}(n,\LL,r)=0$. (Note that this result implies that all density regular $1 \times k$ matrices give rise to an equation $\LL$ which is translation-invariant.) In fact, if $A$ is any $1 \times k$ irredundant integer matrix, then for all $\eps>0$ there exists an $n_0>0$ such that for all integers $n \geq n_0$ we have
$$ \mu^{*}(n,\LL,r) \leq \mu(n,\LL,r) \leq \mu^{*}(n,\LL,r)+\eps n. $$ \COMMENT{AT: actually not sure I have a proof for $\ell \times k$ matrices. But who cares since we only consider $1\times k$.}
This follows from e.g. \cite[Theorem 2]{ksv2}, since such $\LL$ have $o(n^{k-\ell})$ non-$k$-distinct solutions in $[n]$ (i.e. a solution $(x_1,\dots,x_k)$ where there is an $i \not=j$ such that $x_i=x_j$).

Consequently it is equally interesting to study $\mu^{*}(n,\LL,r)$ in the case when $\mu(n,\LL,r) =\Omega (n)$. In the case of sum-free sets (where $\LL$ is $x+y=z$), the study of $\mu^{*}(n,\LL,r)$ is a classical problem of Abbott and Wang~\cite{aw}. (Note that the only difference between $\mu(n,\LL,r)$ and $\mu^{*}(n,\LL,r)$ in this case is that $\mu(n,\LL,r)$ allows non-distinct sums $x+x=z$ whereas $\mu^{*}(n,\LL,r)$ does not.) Let $\mu(n,r):=\mu^{*}(n,\LL,r)$ where $\LL$ is $x+y=z$. An easy proof shows that $\mu(n,1)=\ceil{n/2}$. 

The following definitions help motivate the study of $\mu(n,r)$ for $r \geq 2$. Let $f(r)$ denote the largest positive integer $m$ for which there exists a partition of $[m]$ into $r$ sum-free sets, and let $h(r)$ denote the largest positive integer $m$ for which there exists a partition of $[m]$ into $r$ sets which are sum-free modulo $m+1$. 

Abbott and Wang~\cite{aw} conjectured that $h(r)=f(r)$, and showed that $\mu(n,r) \geq n-\floor{n/(h(r)+1)}$. They also proved the following upper bound.

\begin{thm}[\cite{aw}]
We have $\mu(n,r) \leq n- \floor{cn/((f(r)+1)\log(f(r)+1))}$ where $c:=e^{-\gamma} \approx 0.56$ ($\gamma$ denotes the Euler-Mascheroni constant).
\end{thm}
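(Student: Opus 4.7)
Plan: Let $S \subseteq [n]$ be a maximum strongly $(\LL,r)$-free set, equipped with an $r$-partition $S = S_1 \cup \cdots \cup S_r$ into sum-free sets, and set $M := [n]\setminus S$. Write $T := f(r)+1$ and $D := \floor{n/T}$; the goal is to show $|M| \geq \floor{cn/(T\log T)}$. The first observation is a simple scaling obstruction: for each $d \in [D]$, the arithmetic progression $A_d := \{d,2d,\ldots,Td\} \subseteq [n]$ cannot lie entirely in $S$. Indeed, if $A_d \subseteq S$, then the partition of $S$ restricts to an $r$-partition of $A_d$ into sum-free sets; applying the bijection $id \mapsto i$ (which preserves the relation $x+y=z$, hence sum-freeness) we obtain an $r$-partition of $[T] = [f(r)+1]$ into sum-free sets, contradicting the definition of $f(r)$. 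Hence $M \cap A_d \neq \emptyset$ for every $d \in [D]$.

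The key new idea is to use this obstruction only for those $d$ which are \emph{$T$-rough}, i.e.\ have no prime factor $\leq T$. For each such $d \in [D]$, pick any witness $m(d) \in M \cap A_d$ and write $m(d) = i(d)\cdot d$ with $i(d) \in [T]$. The map $d \mapsto m(d)$ is automatically injective on the $T$-rough $d$'s: since every $i \in [T]$ has all prime factors $\leq T$ while $d$ has none, the integer $i(d)$ coincides with the $T$-smooth part of $m(d)$ and $d$ with its $T$-rough part. Thus $|M| \geq \#\{d \in [D] : d \text{ is }T\text{-rough}\}$, and a standard Legendre sieve together with Mertens' third theorem yields
$$\#\{d \in [D] : d \text{ is }T\text{-rough}\} \;=\; D\prod_{p \leq T}\left(1-\tfrac{1}{p}\right) + O\bigl(2^{\pi(T)}\bigr) \;=\; (1+o_n(1))\,\frac{e^{-\gamma}D}{\log T},$$
where the $o_n(1)$ term vanishes as $n \to \infty$ with $r$ (and hence $T$) fixed. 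Substituting $D = \floor{n/T}$ and absorbing the error into the floor on the right-hand side of the theorem gives the claimed bound.

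The combinatorial input (Step 1) is elementary. The main obstacle is the analytic Step 2/3: the constant $c = e^{-\gamma}$ arises precisely as the limiting value of $\log T \cdot \prod_{p\leq T}(1-1/p)$, and one must check that the Legendre sieve error (of order $2^{\pi(T)}$) and the Mertens remainder are comfortably swallowed by the floor in the statement. This is automatic for any fixed $r$ once $n$ is large enough, but would require more care if one wished to make the bound effective or uniform in $r$.
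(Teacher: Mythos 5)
The paper does not prove this statement -- it is cited verbatim from Abbott and Wang \cite{aw} -- so there is no internal proof to compare against. What follows is therefore an assessment of the argument on its own terms.

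Your combinatorial setup is correct and rather elegant. The observation that $A_d = \lbrace d, 2d, \ldots, Td\rbrace$ cannot lie entirely inside $S$ (because the dilation $id\mapsto i$ transports the given $r$-partition to a forbidden $r$-partition of $[f(r)+1]$ into sum-free sets) is exactly the right obstruction, and restricting to $T$-rough $d$ to force injectivity of $d\mapsto m(d)$ is a genuinely clever device: the $T$-smooth/$T$-rough factorisation of $m(d)$ is unique, so distinct rough $d$'s cannot share a witness. Up to this point everything is sound.

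The gap is in the analytic step, and it is real. You invoke Mertens' third theorem to write
$$D\prod_{p\leq T}\Bigl(1-\tfrac{1}{p}\Bigr) \;=\; (1+o_n(1))\,\frac{e^{-\gamma}D}{\log T},$$
but the error in Mertens' theorem is $o_T(1)$, not $o_n(1)$. Here $T = f(r)+1$ is a \emph{fixed} constant depending only on $r$, so your count is genuinely $(1+o_n(1))\,D\prod_{p\leq T}(1-1/p)$, and you need the finite product to be at least $e^{-\gamma}/\log T$. This inequality fails for every $T$ of practical interest: for $T=2$ one gets $1/2$ against $e^{-\gamma}/\log 2 \approx 0.81$; for $T=10$ one gets $8/35\approx 0.229$ against $\approx 0.244$; for $T=30$ roughly $0.158$ against $\approx 0.165$. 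In fact $\prod_{p\leq T}(1-1/p) < e^{-\gamma}/\log T$ is known to hold for all $T$ up to astronomically large bounds (the first sign change of $\prod_{p\leq T}(1-1/p) - e^{-\gamma}/\log T$ is a Littlewood-type phenomenon and is unknown but enormous). So, for the values of $T$ that actually arise, your method gives a strictly weaker constant than $c = e^{-\gamma}$, and the floor cannot absorb a multiplicative deficit. The argument as written therefore proves a bound of the correct \emph{shape}, $\mu(n,r) \leq n - (1-o_n(1))\frac{n}{T}\prod_{p\leq T}(1-1/p)$, but not the stated theorem. To close the gap one would need either a sharper device in place of the $T$-rough restriction (Abbott and Wang's original argument presumably does something different), or to settle for stating the weaker bound with the explicit finite Euler product.
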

\COMMENT{RH: Note that $f(r)+1 \leq R_r(3,\dots,3)-1 \leq \floor{r!e}$ so this gives an upper bound of $n-\floor{cn/(r!e \log r!e)}$; our bound is approximately $n-n/(r!e)$ so is better. Better bounds on Schur numbers $f(r)$ or Ramsey numbers $R_r(3,\dots,3)$ may make the Abbott-Wang bound better than ours.}

We provide an alternate upper bound, which is a  modification of Hu's~\cite{hu} proof that $\mu(n,2) = n-\floor{\frac{n}{5}}$. (To see why this is a lower bound, consider the set $\{ x \in [n]: x \equiv 1 \text{ or } 4 \! \mod 5\} \cup \{ y \in [n]: y \equiv 2 \text{ or } 3 \! \mod 5\}$.) First we need the following fact.
Given $x \in [n]$ and $T \subseteq [n]$, write $x+T := \lbrace x+y : y \in T\rbrace$.
Given $S,T \subseteq [n]$, say that $T$ is a \emph{difference set of $S$} if there exists $x \in S$ such that $x+T \subseteq S$.

\begin{fact}\label{sumfact}
Let $n \in \mathbb{N}$ and $S,T,T' \subseteq [n]$.
\begin{itemize}
\item[(i)] If $T$ is a difference set of a sum-free set $S$, then $S \cap T = \emptyset$.
\item[(ii)] If $T'$ is a difference set of $T$, and $T$ is a difference set of $S$, then $T'$ is a difference set of $S$.
\end{itemize}
\end{fact}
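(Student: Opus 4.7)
The plan is to prove each part by unpacking the definitions directly; both statements reduce to one-line manipulations of the witnesses, so no real obstacle is expected.

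For part (i), I would argue by contradiction. Suppose $T$ is a difference set of the sum-free set $S$, witnessed by some $x \in S$ with $x + T \subseteq S$, and suppose further that there exists $t \in S \cap T$. Since $t \in T$, we have $x + t \in x + T \subseteq S$. So the three elements $x, t, x+t$ all lie in $S$, giving a solution to $a+b=c$ with $a,b,c \in S$ and contradicting the sum-freeness of $S$. Hence $S \cap T = \emptyset$.

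For part (ii), I would construct an explicit witness. Let $x \in S$ witness that $T$ is a difference set of $S$, so $x + T \subseteq S$, and let $y \in T$ witness that $T'$ is a difference set of $T$, so $y + T' \subseteq T$. Set $z := x + y$. Then $z = x + y \in x + T \subseteq S$, so $z \in S$. Moreover,
\[
z + T' = x + (y + T') \subseteq x + T \subseteq S,
\]
so $z$ witnesses that $T'$ is a difference set of $S$, as required.

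The only thing to be slightly careful about is whether the elements involved lie in $[n]$, but this is automatic: by hypothesis the sets and sums defining a difference set are already contained in $[n]$, so all sums appearing in the argument (such as $x+y$ and elements of $x + (y+T')$) are elements of $[n]$ too, since they lie in $S$ or $T$. Thus no additional verification is needed and the proof is essentially a direct definition chase.
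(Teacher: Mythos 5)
Your proof is correct and follows essentially the same approach as the paper: for (i), note that a common element $t \in S \cap T$ together with the witness $x$ gives $x, t, x+t \in S$; for (ii), compose witnesses via $z := x+y$ and check $z+T' \subseteq S$. The paper phrases (i) without an explicit contradiction framing and uses $x'$ in place of your $y$, but the underlying definition-chase is identical.
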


\begin{proof}
If there exists $x \in S$ such that $x+T \subseteq S$ and moreover there exists $y \in S \cap T$, then $x+y \in S$, proving (i).
For (ii), suppose that there is $x' \in T$ and $x \in S$ such that $x'+T' \subseteq T$ and $x+T \subseteq S$.
Then $x+x'+T' \subseteq S$ and $x+x' \in x+T \subseteq S$, proving (ii). 
\end{proof}

\begin{thm}\label{hunew}
We have $\mu(n,r) \leq n-\floor{\frac{n}{\floor{r!e}}}$.
\end{thm}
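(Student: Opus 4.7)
The plan is to adapt Hu's argument for the case $r=2$ by combining the classical Ramsey bound $R_r(3) \leq \lfloor r!e\rfloor + 1$ with a multiplicative-chain argument. Set $m^*:=\lfloor r!e\rfloor$. Applying the standard Ramsey-to-Schur reduction---colour edge $ij$ of $K_N$ by the colour of $|i-j|$---one obtains $f(r)+1\leq m^*$, so every $r$-colouring of $[m^*]$ contains a monochromatic Schur triple $k+\ell=m$.

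Now let $S=S_1\cup\dots\cup S_r\subseteq[n]$ be a partition into sum-free sets and $T:=[n]\setminus S$. For each $a\in[\lfloor n/m^*\rfloor]$, consider the multiplicative chain $C_a:=\{a,2a,\dots,m^*a\}\subseteq[n]$. If $C_a\subseteq S$, the induced $r$-colouring of $[m^*]$ given by $k\mapsto i$ whenever $ka\in S_i$ contains a monochromatic Schur triple $k+\ell=m$ of some colour $i$, so $ka+\ell a=ma\in S_i$ is a sum-triple in $S_i$, contradicting sum-freeness. Hence $C_a\cap T\neq\emptyset$ for every such $a$, and to each $a$ we assign a canonical witness $\phi(a)\in C_a\cap T$ (for definiteness, the smallest $k_a\cdot a$ with $k_a\in[m^*]$ and $k_a a\notin S$).

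The final and most delicate step---and the main obstacle---is to upgrade these $\lfloor n/m^*\rfloor$ witnesses into $\lfloor n/m^*\rfloor$ \emph{distinct} elements of $T$. A naive double count yields only $|T|\geq\lfloor n/m^*\rfloor/m^*$, since a single $t\in T$ may equal $\phi(a)$ for up to $m^*$ different values of $a$ (one per divisor of $t$ in $[t/m^*,t]$); indeed, the extremal construction in which $T$ is the set of multiples of $m^*$ in $[n]$ exhibits genuine collisions such as $\phi(1)=\phi(m^*)=m^*$. To close this gap I would invoke Fact~\ref{sumfact}: whenever $\phi(a)=\phi(a')=t$ with $a<a'$, the minimality of $k_a$ and $k_{a'}$ forces many prescribed multiples of $a$ and $a'$ to lie in a common $S_i$, and the difference-set transitivity of Fact~\ref{sumfact}(ii), applied inside $S_i$, produces a new element of $T$ away from those chains, balancing each collision. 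Equivalently, this verifies Hall's condition on the bipartite chain-graph with parts $[\lfloor n/m^*\rfloor]$ and $T$ (edges $\{(a,t):t\in C_a\}$) and so extracts a matching of size $\lfloor n/m^*\rfloor$, giving $|T|\geq\lfloor n/m^*\rfloor$ and hence $\mu(n,r)\leq n-\lfloor n/m^*\rfloor$.
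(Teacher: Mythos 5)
Your approach is genuinely different from the paper's, and it contains a real gap at the crucial last step.

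The paper does not go through Schur/Ramsey numbers or multiplicative chains at all. Instead it uses a direct iterative difference-set argument: writing $\ell(i)=\lfloor i!e\rfloor$ and using the recursion $\ell(i)=i\ell(i-1)+1$, it builds a nested chain $D_0\supseteq\cdots$ (not literally nested, but iteratively produced) of sets with $|D_i|\geq\ell(r-i)q$, each $D_i$ a common difference set of $i$ of the colour classes and disjoint from all of them (via Fact~\ref{sumfact}); an averaging/pigeonhole step at each stage feeds the recursion, and after $r$ steps one gets $D_r\subseteq R$ with $|D_r|\geq q$, a contradiction. No Hall-type extraction is needed because disjointness is built in from the start. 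Your observation $f(r)+1\leq\lfloor r!e\rfloor$ via $R_r(3)\leq\lfloor r!e\rfloor+1$ is correct, and your first step — that every chain $C_a=\{a,2a,\dots,m^*a\}$ with $m^*a\leq n$ must meet $T$ — is also correct. But these facts by themselves give, as you note, only $|T|\geq\lfloor n/m^*\rfloor/m^*$.

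The gap is the promotion to distinct witnesses. You assert that Fact~\ref{sumfact} lets you "produce a new element of $T$ away from those chains, balancing each collision," and "equivalently" that this verifies Hall's condition on the bipartite chain-graph. Neither claim is argued, and it is not clear either is true. Hall's condition is a statement about every subset $A\subseteq[\lfloor n/m^*\rfloor]$ at once, and the chain constraint only tells you each individual $a$ has at least one neighbour in $T$; a priori many chains can funnel into the same few elements of $T$ (you give such collisions yourself), and nothing you have written rules out a set $A$ with $|N(A)|<|A|$. The sentence about "minimality of $k_a$ and $k_{a'}$ forcing many prescribed multiples of $a$ and $a'$ to lie in a common $S_i$" does not lead to a concrete contradiction or a concrete new element of $T$; Fact~\ref{sumfact} is about translated (additive) difference sets inside a single $S_i$, and your chains are multiplicative, so there is no obvious way to apply it here. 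In short: the idea is not a variant of the paper's proof with a missing detail, it is a different strategy whose central lemma (the Hall/SDR step) is stated as a hope rather than proved, and I do not see how to supply it. To fix this you would essentially need to abandon the chain picture in favour of the paper's additive difference-set recursion, which circumvents the collision problem entirely by constructing the $q$ elements of $R$ directly as a single structured set $D_r$.
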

Note that Theorem~\ref{hunew} does indeed recover Hu's bound~\cite{hu} for the case $r=2$.
\begin{proof}
Fix $n,r \in \mathbb N$.
Let $\ell(0) := 1$.
For all integers $i \geq 1$, define
$$
\ell(i):=i!\left(1+\sum_{t \in [i]}\frac{1}{t!}\right) = \floor{i!e}.
$$
\COMMENT{Proof of $i!\left(1+\sum_{t \in [i]}\frac{1}{t!}\right) = \floor{i!e}$: We have 
$$ \floor{i!e} = \left \lfloor i! \left(1+\frac{1}{1}+\frac{1}{2!} + \dots \right) \right \rfloor = \left \lfloor i! \left(1+\sum_{t \in [i]}\frac{1}{t!} + \sum_{t=i+1}^{\infty} \frac{1}{t!} \right) \right \rfloor =i!\left(1+\sum_{t \in [i]}\frac{1}{t!}\right) + \left \lfloor \sum_{t=i+1}^{\infty} \frac{i!}{t!} \right \rfloor,$$ 
and since we have $i!(i+k+1)! \geq (i+1)!(i+k)!$ for any positive integers $i$ and $k$, we have 
$$\sum_{t=i+1}^{\infty} \frac{i!}{t!} \leq \sum_{t=2}^{\infty} \frac{1}{t!} = e-2<1,$$ hence the result follows.
\\
AT: I see it as that it sufficient to show, for all $k=1,2...$,
$$i!/(i+k)!\leq 1/(1+k)!.$$
This follows as $i!\leq (2+k)\dots (i+k)$.}
Note that $\ell(i)=i \ell(i-1)+1$ for all $i \geq 1$.
Choose the unique $q \in \mathbb{N} \cup \{0\}$ and $0 \leq k \leq \ell(r)-1$ such that $n=\ell(r)q+k$.
Consider any partition 
  $S_1 \dot{\cup} \cdots \dot{\cup} S_r \dot{\cup} R=[n]$, where each $S_i$ is sum-free. 
We wish to show that $|R| \geq q$, since then $\mu(\ell(r)q+k,r) \leq (\ell(r)-1)q+k$ and so $\mu(n,r) \leq n-\floor{n/\ell(r)}$.

Suppose not.
We will obtain integers $\lbrace j_1,\ldots,j_r\rbrace = [r]$ and subsets $D_0, D_1, \ldots, D_r$ of $[n]$ such that the following properties hold for all $0 \leq i \leq r$.
\begin{itemize}
\item[$P_1(i)$] $|D_i| \geq \ell(r-i)q$;
\item[$P_2(i)$] $D_i$ is a difference set of $S_{j_t}$ for all $t \in [i]$;
\item[$P_3(i)$] $D_i \cap S_{j_t} = \emptyset$ for all $t \in [i]$.
\end{itemize}
Let $D_0 := [n]$.
Then $P_1(0)$ holds by definition, and $P_2(0)$ and $P_3(0)$ are vacuous.
Suppose, for some $0 \leq i < r$, we have obtained distinct $\lbrace j_1,\ldots,j_{i}\rbrace \subseteq [r]$ and $D_0,D_1,\ldots,D_{i}$ such that $P_1(t)$--$P_3(t)$ hold for all $t \in [i]$.

Suppose that $|D_{i} \cap \bigcup_{t \in [r] \setminus \{j_1,\dots,j_i\}}S_{t}| \leq (\ell(r-i)-1)q$.
Then we have that
$$
|D_i \cap R| \stackrel{P_3(i)}{\geq} |D_i|-(\ell(r-i)-1)q \stackrel{P_1(i)}{\geq} q,
$$
a contradiction. So
by averaging, there exists $j_{i+1} \in [r]\setminus \lbrace j_1,\ldots,j_i\rbrace$ such that
$$
|D_i \cap S_{j_{i+1}}| \geq \left\lceil \frac{(\ell(r-i)-1)q+1}{r-i} \right\rceil = \ell(r-i-1)q+1.
$$
Thus we can write $D_i \cap S_{j_{i+1}} \supseteq \lbrace s_{i,0} < \ldots < s_{i,\ell(r-i-1)q} \rbrace$.
Let $D_{i+1} := \lbrace s_{i,x}-s_{i,0} : x \in [\ell(r-i-1)q]\rbrace$.
We claim that $P_1(i+1)$--$P_3(i+1)$ hold.
Property $P_1(i+1)$ is clear by definition.
For $P_2(i+1)$, note that $D_{i+1}$ is a difference set of both $D_i$ and $S_{j_{i+1}}$.
Then Fact~\ref{sumfact}(ii) and $P_2(i)$ imply that additionally $D_{i+1}$ is a difference set of $S_{j_t}$ for all $t \in [i]$.
Fact~\ref{sumfact}(i) implies that $D_{i+1} \cap S_{j_{t}} = \emptyset$ for all $t \in [i+1]$, proving $P_3(i+1)$.

Thus we  obtain $D_r$ satisfying $P_1(r)$--$P_3(r)$.
By $P_1(r)$ and $P_3(r)$ we have that $|D_r| \geq \ell(0)q=q$ and $D_r \subseteq R$, a contradiction. 
\end{proof}

\subsection{Open Problem}
We conclude the section with an open problem.
Recall  Hu~\cite{hu} showed that $\mu(n,2) = n-\floor{\frac{n}{5}}$. So in the case when $\LL$ is $x+y=z$, Theorem~\ref{radonum} implies that there are $2^{4n/5+o(n)}$ $(\LL,2)$-free subsets of $[n]$. 
We believe the error term in the exponent here can be replaced by a constant.
\begin{conj}\label{conjCE}
Let $\LL$ denote $x+y=z$. There are $\Theta (2^{4n/5})$ $(\LL,2)$-free subsets of $[n]$.
\end{conj}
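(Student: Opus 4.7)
The lower bound $\Omega(2^{4n/5})$ is immediate from Hu's formula $\mu(n,\LL,2) = n-\lfloor n/5\rfloor$: the set $T_0 := \lbrace x \in [n] : x \not\equiv 0 \pmod 5\rbrace$ has $\lceil 4n/5\rceil$ elements and is $(\LL,2)$-free via the partition $\lbrace x \equiv 1,4 \pmod 5 \rbrace \cup \lbrace x \equiv 2,3 \pmod 5\rbrace$. Since $(\LL,2)$-freeness is hereditary (restricting a valid $2$-colouring to a subset is still valid), every one of the $2^{\lceil 4n/5\rceil}$ subsets of $T_0$ is $(\LL,2)$-free. For the upper bound, my plan is a stability-plus-containers argument reminiscent of how Green~\cite{G-R} and Sapozhenko established the Cameron--Erd\H{o}s conjecture for single-colour sum-free sets.

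Step 1 would be a \emph{stability version of Hu's theorem}: there exist $\eta, C > 0$ and a bounded family $\mathcal T_n$ of subsets of $[n]$ of size $\lceil 4n/5 \rceil$ (the mod-$5$ templates, together with small affine modifications) such that every $(\LL,2)$-free $S\subseteq [n]$ with $|S|\geq (4/5-\eta)n$ satisfies $|S\setminus T| \leq C$ for some $T\in \mathcal T_n$. This should be proved by fixing any sum-free $2$-partition $S=S_1 \cup S_2$ and applying the Deshouillers--Freiman--S\'os stability theorem to each $S_i$: since $|S_1|+|S_2|$ is close to $4n/5$ while each $|S_i|\leq n/2+O(1)$ by Erd\H{o}s's bound, both blocks must be extremely close to extremal sum-free sets in $[n]$, and their disjointness then forces the mod-$5$ structure. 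Step 2 would apply Theorem~\ref{matcontainer} with $A_1=A_2=(1,1,-1)$, $r=2$, and $\delta \ll \eta$, yielding $2^{o(n)}$ containers whose union contains every $(\LL,2)$-free set. Any container $f(S)$ with $|\bigcup_i f(S_i)|<(4/5-\eta/2)n$ contributes at most $2^{o(n)}\cdot 2^{(4/5-\eta/2)n} = o(2^{4n/5})$ subsets in total, so such containers are negligible.

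Step 3 would then count the $(\LL,2)$-free sets inside the remaining, large containers using stability. Any such set $S$ satisfies $|S|\geq (4/5-\eta)n$ and so by Step 1 may be written as $S = (T\setminus X)\cup B$ for some $T\in \mathcal T_n$, $X\subseteq T$, and $B\subseteq [n]\setminus T$ with $|B|\leq C$. A direct count gives at most $|\mathcal T_n|\cdot 2^{|T|}\cdot \binom{n}{\leq C} = O(n^C\cdot 2^{4n/5})$ sets, which is still too large. Step 4, and the main obstacle, is \emph{removing the polynomial factor} $n^C$. The key point is that each element $b\in B$ lying outside the template $T$ severely restricts which subsets $T\setminus X$ can be combined with it: any valid $2$-colouring of $S$ must avoid monochromatic Schur triples $\lbrace b, y, b+y\rbrace$ and $\lbrace x, b, x+b\rbrace$ with $x,y\in T\setminus X$, and the mod-$5$ rigidity of $T$ leaves very few colourings of $T\setminus X$ compatible with $b$. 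One would need to show that on average each extra element of $B$ cuts the admissible count by a factor of $2^{-\Omega(1)}$, giving $\sum_{k=0}^C \binom{n}{k}\cdot 2^{4n/5-\Omega(k)} = O(2^{4n/5})$. Achieving this will likely require adapting the granular/graph-container machinery of Green's proof~\cite{G-R} to the bipartite setting in which sum-freeness of \emph{both} colour classes must be tracked simultaneously; this joint analysis, rather than Steps~1--3, is where genuinely new work is required.
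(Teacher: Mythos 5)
This statement is Conjecture~\ref{conjCE}, which the paper does \emph{not} prove: the paper only establishes $2^{4n/5+o(n)}$ via Theorem~\ref{radonum} (itself a consequence of Theorem~\ref{matcontainer}) combined with Hu's determination of the largest $(\LL,2)$-free set, and it explicitly poses the removal of the $o(n)$ error in the exponent as an open problem, adding in the Additional note that Tran~\cite{tran} has since settled the close variant in which $x+x=z$ is also forbidden. So there is no proof in the paper against which to compare your attempt, and your proposal is, as you acknowledge, an incomplete outline rather than a proof.

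Your lower bound is correct and is the observation the paper itself makes. Steps~2 and~3, however, merely re-derive the $2^{4n/5+o(n)}$ bound that Theorem~\ref{radonum} already provides, and Step~4 --- which you yourself flag as the crux --- contains no argument at all; that is exactly the content of the conjecture. Step~1 also has an independent flaw: if $S=S_1\sqcup S_2$ is a $2$-colouring of a $(\LL,2)$-free set of size close to $4n/5$, the colour classes are only forced to have size around $2n/5$, well below the near-maximum regime ($\approx n/2$) where the single-colour stability theorem you invoke applies. Hence ``apply stability to each $S_i$'' is not available, and the structural conclusion you want (a bounded-size family of templates with \emph{constant} symmetric difference) is both unsupported and sharper than single-colour stability delivers even for near-maximum sum-free sets, which only give $o(n)$ error. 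Finally, note the $\mu$ versus $\mu^*$ distinction drawn in Section~\ref{secaw}: Hu's formula $n-\lfloor n/5\rfloor$ evaluates $\mu^*(n,\LL,2)$ (all solutions forbidden), whereas the conjecture and Theorem~\ref{radonum} concern $\mu(n,\LL,2)$ (only $3$-distinct solutions forbidden); the paper only shows these agree up to $o(n)$, a discrepancy that is immaterial for a $2^{o(n)}$-factor bound but not automatically so once one is chasing a $\Theta$-result.
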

Note that Conjecture~\ref{conjCE} can be viewed as a $2$-coloured analogue of the Cameron--Erd\H{o}s conjecture~\cite{cam1} which was famously resolved by Green~\cite{G-CE} and independently  Sapozhenko~\cite{sap}.

Since our paper was submitted, Tran~\cite{tran} has proved a slight variant of Conjecture~\ref{conjCE}; that is, he proves the result where one instead defines sum-free to also forbid non-distinct sums $x+x=z$ (as in the previous section). Note Tran's result does not quite imply Conjecture~\ref{conjCE} directly.

\section{Applications of the container method to graph Ramsey theory}\label{sec5}
%
In this section we answer some questions in hypergraph Ramsey theory, introduced in Sections~\ref{ressec} and~\ref{countsec}. How many $n$-vertex hypergraphs are not Ramsey, and what does a typical such hypergraph look like? How dense must the Erd\H{o}s-R\'enyi random hypergraph be to have the Ramsey property with high probability, and above this threshold, how strongly does it possess the Ramsey property?

Our main results here are applications of the asymmetric container theorem (Proposition~\ref{asymmulti}).
For arbitrary $k$-uniform hypergraphs $H_1,\ldots,H_r$, we first prove Theorem~\ref{ramseycont}, a container theorem for non-$(H_1,\ldots,H_r)$-Ramsey $k$-uniform hypergraphs.
To see how one might prove such a theorem, observe that, if $\mathcal{H}_i$ is the hypergraph of copies of $H_i$ on $n$ vertices (i.e.~vertices correspond to $k$-subsets of $[n]$, and edges correspond to copies of $E(H_i)$; see Definition~\ref{copiesgraph}), then every non-$(H_1,\ldots,H_r)$-Ramsey $k$-uniform hypergraph $G$ corresponds to a set in $\mathcal{I}(\mathcal{H}_1,\ldots,\mathcal{H}_r)$.
We then use Theorem~\ref{ramseycont} to:
\begin{itemize}
\item[(1)] count the number of $k$-uniform hypergraphs on $n$ vertices which are not $(H_1,\ldots,H_r)$-Ramsey (Theorem~\ref{ramseycount});
\item[(2)] determine the global resilience of $G^{(k)}_{n,p}$ with respect to the property of being $(H_1,\ldots,H_r)$-Ramsey (Theorem~\ref{ramres}).
That is, we show that there is a constant $C$ such that whenever $p \geq Cn^{-1/m_k(H_1)}$, we obtain a function $t$ of $n$ and $p$ such that, with high probability, any subhypergraph $G \subseteq G^{(k)}_{n,p}$ with $e(G)>t+\Omega(pn^k)$ is $(H_1,\ldots,H_r)$-Ramsey. Further, there is some $G' \subseteq G^{(k)}_{n,p}$ with $e(G') > t-o(pn^k)$ which is \emph{not} $(H_1,\ldots,H_r)$-Ramsey.
\item[(3)] As a corollary of (2), we see that, whenever $p \geq Cn^{-1/m_k(H_1)}$, the random hypergraph $G^{(k)}_{n,p}$ is $(H_1,\ldots,H_r)$-Ramsey with high probability.
\end{itemize}

\COMMENT{Application (3) is a generalisation of the $1$-statement of Theorems~\ref{randomturan} and~\ref{hyprandomturan}, the random versions of Tur\'an's theorem, which is the case $r=1$.%
\COMMENT{Here we should instead cite~\cite{frs}, which is random Turan for hypergraphs, proved by Conlon-Gowers for $k$-partite graphs}
Application (4) is a generalisation of the $1$-statement of Theorem~\ref{randomramsey}, the random version of Ramsey's theorem, which is the case $k=2$ and $H_1=\ldots=H_r$.
Let us elaborate on what we know about the Ramsey problem in $G_{n,p}$ in light of Application (3).
There are constants $c,c'>0$ and $R \in \mathbb{N}$ such that if $p < c'n^{-1/m_2(H)}$, then $G_{n,p}$ almost surely is not $(H_1,\ldots,H_r)$-Ramsey; while if $p > cn^{-1/m_2(H)}$, then $G_{n,p}$ almost surely has the Ramsey property in a strong sense: every subgraph of density at least $1-(R-1)^{-1}+\Omega(1)$ is $(H_1,\ldots,H_r)$-Ramsey.
Conversely, the subgraph obtained by deleting all edges inside the parts of a balanced $(R-1)$-partition of $[n]$ has density $1-(R-1)^{-1}+o(1)$ and is \emph{not} $(H_1,\ldots,H_r)$-Ramsey.
It also verifies the $1$-statement in Conjecture~\ref{conjkreu} in the case when $m_2(H_1)=m_2(H_2)$ (since $n^{-1/m_2(H_1)}$ equals the conjectured $n^{-1/m_2(H_1,H_2)}$ in this case).
Previously, the conjecture was only verified in the case when $H_1$ is strictly $2$-balanced; or the $H_i$ are all cycles.
During the preparation of this paper, Gugelmann, Nenadov, Person, Steger, {\v S}kori{\'c} and Thomas~\cite{asymmramsey} proved the generalisation of the $1$-statement of Conjecture~\ref{conjkreu} to $k$-graphs, under some mild assumptions on the $H_i$.
The relation between their statement and ours is discussed further in Section~\ref{randomsec}.}

The statements of (1)--(3) all involve a common parameter: the maximum size $\ex^r(n;H_1,\ldots,H_r)$ of an $n$-vertex $k$-uniform hypergraph which is not $(H_1,\ldots,H_r)$-Ramsey.
For this reason, we generalise the classical supersaturation result of Erd\H{o}s and Simonovits~\cite{supersat} to show that any $n$-vertex $k$-uniform hypergraph $G$ with at least $\ex^r(n;H_1,\ldots,H_r)+\Omega(n^k)$ edges is somehow `strongly' $(H_1,\ldots,H_r)$-Ramsey.
In the graph case, an old result of Burr, Erd\H{o}s and Lov\'asz~\cite{bes} allows us to quite accurately determine $\ex^r(n;H_1,\ldots,H_r)$.

\subsection{Definitions and notation}

In this section, $k \geq 2$ is an integer and we use \emph{$k$-graph} as shorthand for $k$-uniform hypergraph.
Recall from Section~\ref{ressec} that, given $r \in \mathbb{N}$ and a $k$-graph $G$, an \emph{$r$-colouring} is a function $\sigma : E(G) \rightarrow [r]$.
Given $k$-graphs $H_1,\ldots,H_r$, we say that $\sigma$ is \emph{$(H_1,\ldots,H_r)$-free} if $\sigma^{-1}(i)$ is $H_i$-free for all $i \in [r]$.
Then $G$ is \emph{$(H_1,\ldots,H_r)$-Ramsey} if it has no $(H_1,\ldots,H_r)$-free $r$-colouring.

Given an integer $\ell \geq k$, denote by $K^{(k)}_\ell$ the complete $k$-graph on $\ell$ vertices.
A $k$-graph $H$ is \emph{$k$-partite} if the vertices of $H$ can be $k$-coloured so that each edge contains one vertex of each colour.
Given a $k$-graph $S$, recall the definitions
$$
d_k(S) := \begin{cases}
0 &\text{ if } e(S)=0; \\
1/k &\text{ if } v(S)=k \text{ and } e(S)=1;\\
\frac{e(S)-1}{v(S)-k} &\text{ otherwise}
\end{cases}
$$
and $$
m_k(S) := \max_{S' \subseteq S}d_k(S').$$
\subsection{The maximum density of a hypergraph which is not Ramsey}\label{maxsec}

Given integers $n \geq k$ and a $k$-graph $H$, we denote by $\ex(n;H)$ the maximum size of an $n$-vertex $H$-free $k$-graph. Define the \emph{Tur\'an density} $\pi(H)$ of $H$ by
\begin{equation}
\pi(H) := \lim_{n \rightarrow \infty}\frac{\ex(n;H)}{\binom{n}{k}}
\end{equation}
(which exists by a simple averaging argument, see~\cite{kns}).
The so-called supersaturation phenomenon discovered by Erd\H{o}s and Simonovits~\cite{supersat} asserts that any sufficiently large hypergraph with density greater than $\pi(H)$ contains not just one copy of $H$, but in fact a positive fraction of $v(H)$-sized sets span a copy of $H$. 
Note supersaturation problems date back to a result of Rademacher (see~\cite{erdrad}).

\begin{theorem}[\cite{supersat}]\label{ess}
For all $k \in \mathbb{N}$; $\delta > 0$ and all $k$-graphs $H$, there exist $n_0,\eps>0$ such that for all integers $n \geq n_0$,
every $n$-vertex $k$-graph $G$ with $e(G) \geq \left(\pi(H)+\delta\right)\binom{n}{k}$ contains at least $\eps\binom{n}{v(H)}$ copies of $H$.
\end{theorem}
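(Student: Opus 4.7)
The plan is to use the classical double-counting argument of Erd\H{o}s and Simonovits, which extends verbatim from graphs to $k$-graphs. Start by invoking the definition of $\pi(H)$ as $\lim_{m \to \infty} \ex(m;H)/\binom{m}{k}$: there exists some integer $m$ (depending only on $\delta$ and $H$) such that every $m$-vertex $k$-graph with at least $(\pi(H)+\delta/2)\binom{m}{k}$ edges contains a copy of $H$. Fix one such $m$; this is the only auxiliary parameter.

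Now let $G$ be an $n$-vertex $k$-graph with $e(G) \geq (\pi(H)+\delta)\binom{n}{k}$, where $n$ is taken much larger than $m$. Sum $e(G[S])$ over all $m$-subsets $S \subseteq V(G)$. Each edge of $G$ is counted in exactly $\binom{n-k}{m-k}$ such subsets, so using the identity $\binom{n-k}{m-k}/\binom{n}{m} = \binom{m}{k}/\binom{n}{k}$, the average value of $e(G[S])$ equals $e(G)\binom{m}{k}/\binom{n}{k} \geq (\pi(H)+\delta)\binom{m}{k}$. A standard averaging argument (comparing this average against the crude upper bound $\binom{m}{k}$) then shows that at least $(\delta/2)\binom{n}{m}$ of these $m$-subsets $S$ satisfy $e(G[S]) \geq (\pi(H)+\delta/2)\binom{m}{k}$.

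By the choice of $m$, each such dense $m$-subset contains at least one copy of $H$. Since every copy of $H$ in $G$ lies in exactly $\binom{n-v(H)}{m-v(H)}$ of the $m$-subsets of $V(G)$, double-counting yields
$$
\#\{\text{copies of } H \text{ in } G\} \;\geq\; \frac{(\delta/2)\binom{n}{m}}{\binom{n-v(H)}{m-v(H)}} \;=\; \frac{\delta}{2\binom{m}{v(H)}}\binom{n}{v(H)},
$$
so we may take $\eps := \delta/(2\binom{m}{v(H)})$ and $n_0$ to be any integer at least $m$. The argument has no real obstacle: the only subtle point is recognising that the threshold $m$ is handed to us for free by the definition of $\pi(H)$ as a limit, which in turn rests on the averaging fact $\ex(n;H)/\binom{n}{k}$ being monotone decreasing in $n$ for $n \geq v(H)$.
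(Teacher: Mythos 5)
Your proposal is correct and uses essentially the same double-counting argument that the paper employs to prove its $r$-coloured generalisation (Theorem~\ref{epsclose}): the paper cites Theorem~\ref{ess} from Erd\H{o}s--Simonovits without proof, but the proof of Theorem~\ref{epsclose} specialised to $r=1$ is exactly your argument, with the same choice of an auxiliary order $m$ via the Katona--Nemetz--Simonovits averaging, the same Markov-type bound giving at least $(\delta/2)\binom{n}{m}$ dense $m$-subsets, and the same final double count dividing by $\binom{n-v(H)}{m-v(H)}$.
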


When $k=2$, the Erd\H{o}s--Stone--Simonovits theorem~\cite{erdosstone} says that for all graphs $H$, the value of $\pi(H)$ is determined by the chromatic number $\chi(H)$ of $H$, via
\begin{equation}\label{pi}
\pi(H) = 1-\frac{1}{\chi(H)-1}.
\end{equation}
For $k \geq 3$, the value of $\pi(H)$ is only known for a small family of $k$-graphs $H$.
It remains an open problem to even determine the Tur\'an density of $K^{(3)}_4$, the smallest non-trivial complete $3$-graph (the widely-believed conjectured value is $\frac{5}{9}$). 
For more background on this, the so-called \emph{hypergraph Tur\'an problem}, the interested reader should consult the excellent survey of Keevash~\cite{keevash}.

In this section, we generalise Theorem~\ref{ess} from $H$-free hypergraphs to non-$(H_1,\ldots,H_r)$-Ramsey hypergraphs (note that a hypergraph is $H$-free if and only if it is \emph{not} $(H)$-Ramsey).
Given $\eps>0$, we say that an $n$-vertex $k$-graph $G$ is \emph{$\eps$-weakly $(H_1,\ldots,H_r)$-Ramsey}
if there exists an $r$-colouring $\sigma$ of $G$ such that, for all $i \in [r]$, the number of copies of $H_i$ in $\sigma^{-1}(i)$ is less than $\eps\binom{n}{v(H_i)}$.
Otherwise, $G$ is \emph{$\eps$-strongly $(H_1,\ldots,H_r)$-Ramsey}. Note that $\eps$-weakly $(H_1,\ldots,H_r)$-Ramsey graphs may not in fact be $(H_1,\ldots,H_r)$-Ramsey.


Using a well-known averaging argument of Katona, Nemetz and Simonovits~\cite{kns}, we can show that $\binom{n}{k}^{-1}\ex^r(n;H_1,\ldots,H_r)$ converges as $n$ tends to infinity.
Indeed, let $G$ be an $n$-vertex non-$(H_1,\ldots,H_r)$-Ramsey graph with $e(G) = \ex^r(n;H_1,\ldots,H_r)$.
The average density of an $(n-1)$-vertex induced subgraph of $G$ is precisely
$$
\binom{n}{n-1}^{-1} \sum_{U \subseteq V(G): |U|=n-1}\frac{e(G[U])}{\binom{n-1}{k}} = (n-k)^{-1}\cdot \binom{n}{k}^{-1}\sum_{U \subseteq V(G):|U|=n-1}e(G[U]) = \binom{n}{k}^{-1}e(G).
$$
But the left-hand side is at most $\binom{n-1}{k}^{-1}\cdot\ex^r(n-1;H_1,\ldots,H_r)$, otherwise $G$ would contain an $(n-1)$-vertex subgraph which is $(H_1,\ldots,H_r)$-Ramsey, violating the choice of $G$.
We have shown that
$$
\frac{\ex^r(n;H_1,\ldots,H_r)}{\binom{n}{k}}
$$
is a non-increasing function of $n$ (which is bounded below, by $0$), and so this function has a limit. Therefore we may define the \emph{$r$-coloured Tur\'an density} $\pi(H_1,\ldots,H_r)$ of $(H_1,\ldots,H_r)$ by
$$
\pi(H_1,\ldots,H_r) := \lim_{n \rightarrow \infty} \frac{\ex^r(n;H_1,\ldots,H_r)}{\binom{n}{k}}.
$$


As for $k \geq 3$, the problem of determining $\pi(H)$ is still out of reach, we certainly cannot evaluate $\pi(H_1,\ldots,H_r)$ in general.
However, any non-$(H_1,\ldots,H_r)$-Ramsey graph is $K^{(k)}_s$-free, where $s := R(H_1,\ldots,H_r)$ is the smallest integer $m$ such that $K^{(k)}_m$ is $(H_1,\ldots,H_r)$-Ramsey.
Thus
\begin{equation}\label{pibound}
\pi(H_1,\ldots,H_r) \leq \pi(K_s^{(k)}),
\end{equation}%
\COMMENT{lower bound?}
which is at most $1 - \binom{s-1}{k-1}^{-1}$ (de Caen~\cite{decaen}).
An interesting question is for which $H_1,\ldots,H_r$ the inequality in~(\ref{pibound}) is tight.
We discuss the case $k=2$ in detail in Section~\ref{graphsec}.

We now state the main result of this subsection, which generalises Theorem~\ref{ess} to $r \geq 1$. The proof follows a standard approach to proving supersaturation results.

\begin{theorem}\label{epsclose}
For all $\delta > 0$, integers $r \geq 1$ and $k \geq 2$, and $k$-graphs $H_1,\ldots,H_r$, there exist $n_0,\eps > 0$ such that for all integers $n \geq n_0$,
every $n$-vertex $k$-graph $G$ with 
$e(G) \geq \left(\pi(H_1,\ldots,H_r)+\delta\right)\binom{n}{k}$ is $\eps$-strongly $(H_1,\ldots,H_r)$-Ramsey.
\end{theorem}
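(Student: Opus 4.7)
The proof follows the standard supersaturation averaging approach pioneered by Erd\H{o}s and Simonovits. The plan is to choose a moderately large integer $m$ and show that the conclusion reduces to the statement that a positive fraction of $m$-vertex induced subgraphs of $G$ are themselves $(H_1,\ldots,H_r)$-Ramsey.

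First, using that $\pi(H_1,\ldots,H_r) = \lim_{m\to\infty}\ex^r(m;H_1,\ldots,H_r)/\binom{m}{k}$, I would pick $m$ large enough that every $m$-vertex $k$-graph with more than $(\pi(H_1,\ldots,H_r)+\delta/2)\binom{m}{k}$ edges is $(H_1,\ldots,H_r)$-Ramsey. Set $V := \max_{i \in [r]} v(H_i)$ and take $n_0 \gg m$, and let $G$ be an $n$-vertex $k$-graph with $n\geq n_0$ satisfying $e(G) \geq (\pi(H_1,\ldots,H_r)+\delta)\binom{n}{k}$. A uniformly random $m$-subset $U$ of $V(G)$ satisfies $\mathbb{E}[e(G[U])/\binom{m}{k}] = e(G)/\binom{n}{k}$, so a Markov-type argument shows that at least a $\delta/2$ fraction of the $m$-subsets $U$ are \emph{dense}, meaning $e(G[U]) \geq (\pi(H_1,\ldots,H_r)+\delta/2)\binom{m}{k}$. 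By the choice of $m$, every dense $U$ has $G[U]$ being $(H_1,\ldots,H_r)$-Ramsey.

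Now fix any $r$-colouring $\sigma : E(G) \to [r]$. For each dense $U$, the restriction of $\sigma$ to $G[U]$ must produce a monochromatic copy of $H_{i(U)}$ in some colour $i(U) \in [r]$. By pigeonhole over the colours, there exists a single $i^* \in [r]$ such that at least a $\delta/(2r)$ fraction of \emph{all} $m$-subsets $U$ contain a copy of $H_{i^*}$ lying entirely in $\sigma^{-1}(i^*)$. Letting $N_{i^*}$ denote the number of copies of $H_{i^*}$ in $\sigma^{-1}(i^*)$, a standard double-counting over pairs (copy of $H_{i^*}$, containing $m$-subset) yields
\[
\frac{\delta}{2r}\binom{n}{m} \leq N_{i^*}\binom{n-v(H_{i^*})}{m-v(H_{i^*})},
\]
which rearranges to $N_{i^*} \geq \frac{\delta}{2r\binom{m}{v(H_{i^*})}}\binom{n}{v(H_{i^*})}$. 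Choosing $\eps := \delta/(2r\binom{m}{V})$ then makes $N_{i^*} \geq \eps\binom{n}{v(H_{i^*})}$ for every $\sigma$, which is exactly the assertion that $G$ is $\eps$-strongly $(H_1,\ldots,H_r)$-Ramsey.

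There is no serious obstacle: the only nontrivial ingredient is the existence of the limit defining $\pi(H_1,\ldots,H_r)$, which has already been established in the preceding paragraph via the Katona--Nemetz--Simonovits averaging argument. The rest is just a standard random-subset and pigeonhole computation, and all the quantitative choices ($m$, $n_0$, and $\eps$) depend only on $\delta$, $r$, $k$, and the $H_i$.
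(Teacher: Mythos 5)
Your proposal is correct and follows essentially the same approach as the paper's proof: choose $m$ large by convergence of the $r$-coloured Tur\'an density, show via averaging that a $\delta/2$-fraction of $m$-subsets are dense (hence $(H_1,\ldots,H_r)$-Ramsey), pigeonhole over colours, and double-count monochromatic copies over containing $m$-sets. One small slip: the choice $\eps := \delta/(2r\binom{m}{V})$ with $V := \max_i v(H_i)$ works only if $\binom{m}{V} = \max_i \binom{m}{v(H_i)}$, which can fail when $m < 2V$; so either take $m \geq 2V$ (always permissible, since any sufficiently large $m$ works), or simply choose $\eps \leq \delta/(2r\binom{m}{v(H_i)})$ for every $i$, as the paper does.
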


\begin{proof}
Let $\delta > 0$ and let $r,k$ be positive integers with $k \geq 2$.
By the definition of $\pi(\cdot)$, there exists $m_0>0$ such that for all integers $m \geq m_0$,
$$
\ex^r(m;H_1,\ldots,H_r)<\left(\pi(H_1,\ldots,H_r)+\frac{\delta}{2}\right)\binom{m}{k}.
$$
Fix an integer $m \geq m_0$.
Without loss of generality, we may assume that $m \geq v(H_i)$ for all $i \in [r]$.
Choose $\eps>0$ to be such that
$$
\eps \leq \frac{\delta}{2r}\binom{m}{v(H_i)}^{-1}
$$
for all $i \in [r]$.
Let $n$ be an integer which is sufficiently large compared to $m$, and let $G$ be a $k$-graph on $n$ vertices with $e(G) = (\pi(H_1,\ldots,H_r)+\delta) \binom{n}{k}$.
We need to show that, for every $r$-colouring $\sigma$ of $G$, there is $i \in [r]$ such that $\sigma^{-1}(i)$ contains at least $\eps\binom{n}{v(H_i)}$ copies of $H_i$; so fix an arbitrary $\sigma$.

Define $\mathcal{M}$ to be the set of $M \in \binom{V(G)}{m}$ such that $e(G[M]) \geq (\pi(H_1,\ldots,H_r)+\frac{\delta}{2})\binom{m}{k}$.
Then
\begin{align*}
\sum_{U \subseteq V(G): |U|=m}e(G[U]) \leq |\mathcal{M}|\binom{m}{k} + \left(\binom{n}{m}-|\mathcal{M}|\right)\left(\pi(H_1,\ldots,H_r)+\frac{\delta}{2}\right)\binom{m}{k}.
\end{align*}
But for every $e \in E(G)$, there are exactly $\binom{n-k}{m-k}$ sets $U \subseteq V(G)$ with $|U|=m$ such that $e \in E(G[U])$. Thus also
\begin{align*}
\sum_{U \subseteq V(G): |U|=m}e(G[U]) \geq \binom{n-k}{m-k}(\pi(H_1,\ldots,H_r)+\delta)\binom{n}{k} = (\pi(H_1,\ldots,H_r)+\delta)\binom{n}{m}\binom{m}{k},
\end{align*}
and so, rearranging, we have $|\mathcal{M}| \geq \delta\binom{n}{m}/2$.
By the choice of $m$, for every $M\in \mathcal{M}$, there exists $i=i(M) \in [r]$ such that $\sigma^{-1}(i)$ contains a copy of $H_i$ with vertices in $M$.
Choose $\mathcal{M}' \subseteq \mathcal{M}$ such that the $i(M')$ are equal for all $M'\in \mathcal{M}'$ and $|\mathcal{M}'| \geq |\mathcal{M}|/r$.
Without loss of generality let us assume that $i(M')=1$ for all $M' \in \mathcal{M}'$.
So for each $M' \in \mathcal{M}'$, there is a  copy of $H_1\subseteq G[ M']$ which is monochromatic with colour $1$ under $\sigma$.
Each such copy has vertex set contained in at most $\binom{n-v(H_1)}{m-v(H_1)}$ sets $M' \in \mathcal{M}'$.
Thus the number of such monochromatic copies of $H_1$ in $G$ is at least
$$
\frac{\frac{\delta}{2}\cdot  \binom{n}{m}}{r\binom{n-v(H_1)}{m-v(H_1)}} = \frac{\delta}{2r}\cdot \binom{m}{v(H_1)}^{-1} \cdot \binom{n}{v(H_1)} \geq \eps\binom{n}{v(H_1)}.
$$
So $G$ is $\eps$-strongly $(H_1,\ldots,H_r)$-Ramsey, as required.
\end{proof}

\subsection{The special case of graphs: maximum size and typical structure}\label{graphsec}

The intimate connection between forbidden subgraphs and chromatic number when $k=2$ allows us to make some further remarks here.
(This section is separate from the remainder of the paper and the results stated here will not be required later on.)

\subsubsection{The maximum number of edges in a graph which is not Ramsey}

Given $s,n \in \mathbb{N}$, let $T_s(n)$ denote the $s$-partite Tur\'an ($2$-)graph on $n$ vertices; that is, the vertex set of $T_s(n)$ has a partition into $s$ parts $V_1,\ldots,V_s$ such that $\left| |V_i|-|V_j|\right| \leq 1$ for all $i,j \in [s]$; and $xy$ is an edge of $T_s(n)$ if and only if there are $ij \in \binom{[s]}{2}$ such that $x \in V_i$ and $y \in V_j$.
Write $t_s(n) := e(T_s(n))$.

We need to define two notions of Ramsey number.

\begin{definition}[Ramsey number, chromatic Ramsey number and chromatic Ramsey equivalence]
Given an integer $r \geq 1$ and families $\mathcal{H}_1,\ldots,\mathcal{H}_r$ of graphs, the \emph{Ramsey number $R(\mathcal{H}_1,\ldots,\mathcal{H}_r)$} is the least $m$ such that any $r$-colouring of $K_m$ contains an $i$-coloured copy of $H_j$ for some $i \in [r]$ and  some $H_j \in \mathcal{H}_i$.
If $\mathcal{H}_i = \lbrace K_{\ell_i} \rbrace$ for all $i \in [r]$ then we instead write $R(\ell_1,\ldots,\ell_r)$, and simply $R^r(\ell)$ in the case when $\ell_1=\ldots=\ell_r=:\ell$.

Given graphs $H_1,\ldots,H_r$, the \emph{chromatic Ramsey number $R_\chi(H_1,\ldots,H_r)$} is the least $m$ for which there exists an $(H_1,\ldots,H_r)$-Ramsey graph with chromatic number $m$.

\end{definition}

Trivially, for any $k$-graph $H$, we have that $R_\chi(H) = \chi(H)$.
If $H_1,\ldots,H_r$ are graphs, then
\begin{equation}\label{graphepsclose}
t_{R_\chi(H_1,\ldots,H_r)-1}(n) \leq \ex^r(n;H_1,\ldots,H_r) \leq t_{R_\chi(H_1,\ldots,H_r)-1}(n) + o(n^2).
\end{equation}
Thus
\begin{equation}\label{piR}
\pi(H_1,\ldots,H_r) = 1 - \frac{1}{R_\chi(H_1,\ldots,H_r)-1} = \pi\left(K_{R_\chi(H_1,\ldots,H_r)}\right).
\end{equation}
The first inequality in 
(\ref{graphepsclose}) follows by definition of  $\ex^r(n;H_1,\ldots,H_r)$; the second  from~(\ref{pi}) applied with a graph $H$ which is $(H_1,\ldots,H_r)$-Ramsey and has $\chi(H) = R_\chi(H_1,\ldots,H_r)$.
Clearly, then, $\pi(H_1,\ldots,H_r)=\pi(J_1,\ldots,J_r)$ if and only if $R_\chi(J_1,\ldots,J_r)=R_\chi(H_1,\ldots,H_r)$.
So, in the graph case, the inequality~(\ref{pibound}) is tight when the Ramsey number and chromatic Ramsey number coincide.

As noted by Bialostocki, Caro and Roditty~\cite{bcr}, one can determine $\ex^r(n;H_1,\ldots,H_r)$ exactly in the case when $H_1,\ldots,H_r$ are cliques of equal size.

\begin{theorem}[\cite{bcr}]\label{bcrthm}
For all positive integers $\ell,n \geq 3$ and $r \geq 1$, we have $\ex^r(n;K_\ell,\ldots,K_\ell)=t_{R^r(\ell)-1}(n)$.
\end{theorem}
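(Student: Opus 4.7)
The plan is to prove the two inequalities separately, each via a direct appeal to classical results. The key observation connecting the two sides is that $t_{R^r(\ell)-1}(n)$ is exactly the Tur\'an number $\ex(n;K_{R^r(\ell)})$, so Tur\'an's theorem says that any $n$-vertex graph with strictly more edges must contain a copy of $K_{R^r(\ell)}$.

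For the upper bound $\ex^r(n;K_\ell,\dots,K_\ell) \leq t_{R^r(\ell)-1}(n)$, I would take any $n$-vertex graph $G$ with $e(G) > t_{R^r(\ell)-1}(n)$ and apply Tur\'an's theorem to extract a copy of $K_{R^r(\ell)}$ in $G$. Then any $r$-colouring of $E(G)$ restricts to an $r$-colouring of this $K_{R^r(\ell)}$, which by the very definition of $R^r(\ell)$ must contain a monochromatic $K_\ell$. Hence every such $G$ is $(K_\ell,\dots,K_\ell)$-Ramsey, giving the desired inequality.

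For the matching lower bound, I would exhibit a non-Ramsey graph with precisely $t_{R^r(\ell)-1}(n)$ edges, namely the Tur\'an graph $T_m(n)$ where $m := R^r(\ell)-1$. By definition of $R^r(\ell)$, there is an $r$-colouring $\tau$ of $E(K_m)$ with no monochromatic $K_\ell$. Letting $V_1,\dots,V_m$ denote the parts of $T_m(n)$, define an $r$-colouring $\sigma$ of $E(T_m(n))$ by $\sigma(uv) := \tau(ij)$ whenever $u \in V_i$ and $v \in V_j$ with $i \neq j$; this is well-defined since $T_m(n)$ has no edges inside parts. Every clique in $T_m(n)$ meets each part in at most one vertex, so a monochromatic $K_\ell$ in $\sigma$ would project to a monochromatic $K_\ell$ in $\tau$ on $K_m$, contradicting the choice of $\tau$. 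Hence $T_m(n)$ is not $(K_\ell,\dots,K_\ell)$-Ramsey.

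I do not anticipate any genuine obstacle: the argument is essentially a two-step consequence of Tur\'an's theorem together with the definition of the diagonal Ramsey number $R^r(\ell)$, combined with a standard blow-up construction. The only point to verify is that the blow-up works for arbitrary $n$, not just for $n$ divisible by $m$, which is automatic because the argument never uses that the parts $V_i$ have equal size. It is noteworthy that, unlike the situation for general $H_1,\dots,H_r$ where one only has the asymptotic bound~(\ref{graphepsclose}), here one obtains exact equality for every $n$ because Tur\'an's theorem itself is exact.
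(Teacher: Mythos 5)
Your proof is correct. The paper cites~\cite{bcr} for this theorem without reproducing a proof, but its surrounding discussion encodes exactly your argument: the lower bound is the first inequality in~(\ref{graphepsclose}), specialised to cliques of equal size via the second part of Lemma~\ref{chromram} (which gives $R_\chi(K_\ell,\ldots,K_\ell)=R^r(\ell)$), and the construction witnessing that inequality is precisely your blow-up of a monochromatic-$K_\ell$-free colouring of $K_{R^r(\ell)-1}$ onto the parts of the Tur\'an graph. Your upper-bound argument -- Tur\'an's theorem to find $K_{R^r(\ell)}$, then the definition of $R^r(\ell)$ to find a monochromatic $K_\ell$ in it under any $r$-colouring -- is the intended one and is what makes the identity exact for every $n$, as you correctly note. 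The only slight subtlety, which you acknowledge and handle, is that $n$ need not be divisible by $R^r(\ell)-1$ (and may even be smaller than it, in which case some parts are empty and $T_{R^r(\ell)-1}(n)=K_n$); neither step uses balancedness, so the argument goes through. No gaps.
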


Thus in this case~(\ref{pibound}) is tight.
The chromatic Ramsey number was introduced by Burr, Erd\H{o}s and Lov\'asz~\cite{bes} who showed that, in principle, one can determine $R_\chi$ given the usual Ramsey number $R$.
A \emph{graph homomorphism} from a graph $H$ to a graph $K$ is a function $\phi : V(H) \rightarrow V(K)$ such that $\phi(x)\phi(y) \in E(K)$ whenever $xy \in E(H)$.
Let $\Hom(H)$ denote the set of all graphs $K$ such that there exists a graph homomorphism $\phi$ for which $K=\phi(H)$.
Since there exists a homomorphism from $H$ into $K_\ell$ if and only if $\chi(H) \leq \ell$, we also have that $R(\Hom(H))=\chi(H)$. Thus $R(\Hom(H)) = R_\chi(H)$.
In fact this relationship extends to all $r \geq 1$.

\begin{lemma}[\cite{bes,chvatal,lin}]\label{chromram}
For all integers $r \in \mathbb{N}$ and graphs $H_1,\ldots,H_r$,
$$
R_\chi(H_1,\ldots,H_r) = R(\mathrm{Hom}(H_1),\ldots,\mathrm{Hom}(H_r)).
$$
Moreover, for all integers $\ell_1,\ldots,\ell_r \geq 3$, we have that
$$
R_\chi(K_{\ell_1},\ldots,K_{\ell_r}) = R(\ell_1,\ldots,\ell_r).
$$
\end{lemma}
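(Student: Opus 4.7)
My plan is to prove the two directions of the main equality and deduce the moreover statement. Write $m := R(\mathrm{Hom}(H_1),\ldots,\mathrm{Hom}(H_r))$, which is finite since $K_{\chi(H_i)} \in \mathrm{Hom}(H_i)$ for each $i$ gives $m \leq R(\chi(H_1),\ldots,\chi(H_r))$. For the lower bound $R_\chi(H_1,\ldots,H_r) \geq m$, let $G$ be any $(H_1,\ldots,H_r)$-Ramsey graph and set $c := \chi(G)$; a proper $c$-colouring is precisely a homomorphism $\phi: G \to K_c$. Given an arbitrary $r$-colouring $\sigma$ of $K_c$, pull it back to $\sigma'(xy) := \sigma(\phi(x)\phi(y))$ on $E(G)$, which is well-defined since $\phi(x)\neq \phi(y)$ whenever $xy \in E(G)$. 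The Ramsey property of $G$ yields a monochromatic copy $H'$ of some $H_i$ in colour $i$ under $\sigma'$, and then $\phi(H') \in \mathrm{Hom}(H_i)$ is monochromatic in $\sigma$ in colour $i$. Hence every $r$-colouring of $K_c$ contains a monochromatic element of some $\mathrm{Hom}(H_i)$ in colour $i$, forcing $c \geq m$.

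For the upper bound $R_\chi(H_1,\ldots,H_r) \leq m$, I would exhibit as witness the complete $m$-partite graph $G := K_{n,\ldots,n}$ with parts $V_1,\ldots,V_m$ of size $n \gg 1$; since $G \supseteq K_m$ and the parts form a proper $m$-colouring, $\chi(G) = m$. Given an $r$-colouring $\sigma$ of $G$, pigeonhole picks, for each pair $ij$, a colour $c_{ij} \in [r]$ with at least $n^2/r$ edges of colour $c_{ij}$ between $V_i$ and $V_j$. The resulting $r$-colouring $\sigma'$ of $E(K_m)$ must, by the definition of $m$, contain a $\sigma'$-monochromatic homomorphic image $K \in \mathrm{Hom}(H_{i^*})$ of colour $i^*$ for some $i^* \in [r]$, witnessed by a surjective homomorphism $\psi: H_{i^*} \to K$. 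Since the colour-$i^*$ bipartite graph between $V_j$ and $V_k$ has density $\geq 1/r$ for every $jk \in E(K)$, a standard multipartite counting/embedding lemma produces $\Omega(n^{v(H_{i^*})})$ injective maps $f: V(H_{i^*}) \to V(G)$ with $f(v) \in V_{\psi(v)}$ and $f(u)f(v)$ a colour-$i^*$ edge of $G$ for every $uv \in E(H_{i^*})$. For $n$ sufficiently large, at least one such $f$ exists and yields the desired monochromatic copy of $H_{i^*}$.

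The moreover statement follows immediately: any homomorphism from $K_\ell$ is injective, since adjacent vertices must have distinct images and every pair in $K_\ell$ is adjacent. Thus $\mathrm{Hom}(K_\ell) = \{K_\ell\}$ up to isomorphism, whence $R(\mathrm{Hom}(K_{\ell_1}),\ldots,\mathrm{Hom}(K_{\ell_r})) = R(\ell_1,\ldots,\ell_r)$, and specialising the first part of the lemma yields $R_\chi(K_{\ell_1},\ldots,K_{\ell_r}) = R(\ell_1,\ldots,\ell_r)$. The main obstacle in the argument is the transfer step in the upper bound: passing from the abstract homomorphism $\psi: H_{i^*} \to K$ to a genuine colour-$i^*$ copy of $H_{i^*}$ inside $G$. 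This reduces to a density-$\Omega(1)$ multipartite counting statement, handled routinely by iterated Cauchy--Schwarz or greedy embedding after discarding low-degree vertices, but it is the only step that requires non-trivial combinatorial work.
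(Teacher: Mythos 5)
Your lower bound $R_\chi(H_1,\ldots,H_r) \geq m := R(\mathrm{Hom}(H_1),\ldots,\mathrm{Hom}(H_r))$ is correct: pulling an arbitrary $r$-colouring of $K_{\chi(G)}$ back along a proper colouring $\phi : G \to K_{\chi(G)}$ and pushing the resulting monochromatic copy of $H_i$ forward gives a monochromatic element of $\mathrm{Hom}(H_i)$ in $K_{\chi(G)}$, exactly as you describe. The ``moreover'' deduction from $\mathrm{Hom}(K_\ell) = \lbrace K_\ell\rbrace$ is also correct, and it is in fact the only argument the paper itself supplies, since the rest of the lemma is cited from Burr--Erd\H{o}s--Lov\'asz, Chv\'atal and Lin.

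The upper bound has a genuine gap in exactly the step you single out. After passing to pairwise majority colours $\sigma'$ on $K_m$ and finding a $\sigma'$-monochromatic $K\in\mathrm{Hom}(H_{i^*})$, you assert that because each relevant bipartite pair $(V_j,V_k)$ has colour-$i^*$ density at least $1/r$, a ``standard multipartite counting/embedding lemma'' (iterated Cauchy--Schwarz, or greedy embedding after discarding low-degree vertices) produces a partite copy of $H_{i^*}$. No such lemma exists for non-bipartite $H_{i^*}$: density alone does not yield partite embeddings without quasirandomness. Already for $H_{i^*}=K_3$, take parts $V_1,V_2,V_3$ of size $n$, split $V_2=A\cup B$ with $|A|=|B|=n/2$, and let the colour-$i^*$ edges be exactly $V_1\times A$, $B\times V_3$ and all of $V_1\times V_3$. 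Each pair has colour-$i^*$ density at least $1/2\geq 1/r$, yet there is no colour-$i^*$ triangle respecting the tripartition, since its middle vertex would have to lie in $A\cap B=\emptyset$. Cauchy--Schwarz controls homomorphism counts from bipartite targets only, and greedy embedding works for forests; neither closes this gap.

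The witness $K_m[n]$ is nevertheless correct; only the proof of its Ramsey property needs repair. The fix is to pigeonhole on the colouring induced on a whole transversal rather than on pairwise majority colours: for each $(v_1,\ldots,v_m)\in V_1\times\cdots\times V_m$, record the $r$-colouring of $E(K_m)$ induced by $\sigma$ on $\lbrace v_1,\ldots,v_m\rbrace$. Some colouring $c^*$ arises from a set $T^*$ of at least $n^m/r^{\binom{m}{2}}$ transversals. Erd\H{o}s's multidimensional K\H{o}v\'ari--S\'os--Tur\'an theorem (boxes in dense subsets of $[n]^m$) then yields, for $n$ large, a box $U_1\times\cdots\times U_m\subseteq T^*$ with $|U_j|=t:=\max_i v(H_i)$. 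Every edge of $G$ between $U_j$ and $U_k$ has colour $c^*(jk)$, so the induced $K_m[t]$ carries the blow-up colouring of $c^*$; the $c^*$-monochromatic $K\in\mathrm{Hom}(H_{i^*})$ blows up to a monochromatic $K[t]\supseteq H_{i^*}$ in colour $i^*$, completing the embedding. Alternatively, Burr--Erd\H{o}s--Lov\'asz use a large tensor power $K_m^{\times N}$ as the witness; either way, the correct argument replaces your density lemma with a canonical-colouring argument, which is the essential missing ingredient.
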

\COMMENT{is the lemma implied by (\ref{graphepsclose}) and Lemma~\ref{pilem}??}

The second statement is a corollary of the first since $\mathrm{Hom}(K_\ell) = \lbrace K_\ell \rbrace$.%
\COMMENT{this is an equals and not a $\supseteq$ because any $K$ in $\Hom(K_\ell)$ must be the \emph{image} of $\phi$.}
Another observation (see~\cite{bes}) is that for all $\ell \in \mathbb{N}$, the chromatic Ramsey number
$R_\chi(C_{2\ell+1},C_{2\ell+1})$ is equal to $5$ if $\ell=2$, and equal to $6$ otherwise.

The first inequality in~(\ref{graphepsclose}) is not always tight, for example when $H$ is the disjoint union of two copies of some graph $G$.
Indeed, $\Hom(H) \supseteq \Hom(G)$ and so $R_\chi(H,\ldots,H) = R_\chi(G,\ldots,G)$.
Let $F$ be an $n$-vertex graph with $e(F) = \ex^r(n;G,\ldots,G)$ which is not $(G,r)$-Ramsey.
Obtain a graph $T$ by adding an edge $e$ to $F$.
Then there exists an $r$-colouring of $T$ in which every monochromatic copy of $G$ contains $e$ (the monochromatic-$G$-free colouring of $F$, with $e$ arbitrarily coloured).
Hence $T$ is not $(H,r)$-Ramsey and so
$$
\ex^r(n;H,\ldots,H) > \ex^r(n;G,\ldots,G) \geq t_{R_\chi(G,\ldots,G)}(n) = t_{R_\chi(H,\ldots,H)}(n).
$$

We say that a graph $H$ is \emph{(weakly) colour-critical} if there exists $e \in E(H)$ for which $\chi(H-e) < \chi(H)$.
Complete graphs and odd cycles are examples of colour-critical graphs.
The following conjecture would generalise Theorem~\ref{bcrthm} to provide a large class of graphs where the first inequality in~(\ref{graphepsclose}) is tight.

\begin{conjecture}\label{conj1}
Let $r$ be a positive integer and $H$ a colour-critical graph.
Then, whenever $n$ is sufficiently large,
$$
\ex^r(n;H,\ldots,H) = t_{R_\chi(H,\ldots,H)-1}(n).
$$
\end{conjecture}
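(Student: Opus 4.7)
The plan is to generalise Simonovits's classical stability-plus-cleaning proof of the case $r=1$ (Simonovits's theorem for colour-critical graphs) to arbitrary $r$. Write $s := R_\chi(H,\ldots,H) - 1$. The lower bound $\ex^r(n;H,\ldots,H) \geq t_s(n)$ is immediate: by definition of $R_\chi$, no graph of chromatic number at most $s$ is $(H,\ldots,H)$-Ramsey, and $\chi(T_s(n)) = s$. For the upper bound, I would suppose for a contradiction that $G$ is an $n$-vertex graph with $e(G) > t_s(n)$ that admits an $(H,\ldots,H)$-free $r$-colouring $\sigma$. By (\ref{piR}), $\pi(H,\ldots,H) = 1 - 1/s$, and combined with Theorem~\ref{epsclose} this gives $e(G) = t_s(n) + o(n^2)$.

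The first step is a multi-colour stability statement: any such near-extremal non-$(H,\ldots,H)$-Ramsey $G$ admits a vertex partition $V_1 \cup \cdots \cup V_s$ with $|V_j| = n/s \pm o(n)$ and only $o(n^2)$ edges inside the parts. To establish this, apply Szemer\'edi's regularity lemma to $G$ and assign each dense regular pair of the reduced graph $R$ a majority colour; by the counting lemma, a monochromatic copy of $H$ in $R$ would lift to one in $G$ of the same colour, contradicting $(H,\ldots,H)$-freeness of $\sigma$. Thus $R$ with this induced colouring is itself non-$(H,\ldots,H)$-Ramsey, so by the upper bound in (\ref{graphepsclose}) applied to $R$, we obtain $e(R) \leq t_s(k) + o(k^2)$. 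Since $R$ is also near-extremal for this bound, the Erd\H{o}s--Simonovits stability theorem for $T_s(k)$ gives that $R$ is close to $T_s(k)$ in edit distance, which lifts to the desired near-Tur\'an partition of $G$.

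The cleaning step then attempts to use the colour-criticality of $H$ to rule out any edges inside parts. Let $e_H = x_1 x_2 \in E(H)$ be a colour-critical edge; then $H - e_H$ has a proper $(\chi(H) - 1)$-colouring in which $x_1$ and $x_2$ share a colour (otherwise restoring $e_H$ would extend the colouring to one of $H$ with $\chi(H)-1$ colours). Since $s + 1 = R_\chi(H,\ldots,H) \geq R_\chi(H) = \chi(H)$, we have $s \geq \chi(H) - 1$, so $H - e_H$ fits dimensionally into the $s$-partite skeleton with $x_1, x_2$ placed in the same part. If an internal edge $uv \subseteq V_j$ of colour $i := \sigma(uv)$ survives a standard vertex-cleaning step (discarding vertices with abnormally many in-part neighbours or abnormally few cross-part neighbours), a greedy embedding through the dense cross-part structure in colour $i$ would produce a copy of $H$ in colour $i$ containing $uv$, contradicting $(H,\ldots,H)$-freeness of $\sigma$ and completing the proof.

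The hard part is precisely this final greedy embedding. In Simonovits's $r=1$ argument, the unique colour class is all of $G$, so the near-complete multipartite structure of $G$ itself guarantees the embedding. For $r \geq 2$, however, a single colour class $\sigma^{-1}(i)$ need not inherit the cross-part density of $G$ after stability, since the edges of $G$ may be partitioned among the $r$ colours in an adversarial way. To overcome this one would need either a refined stability at the level of individual colour classes, forcing the colour of the internal edge to be dense enough across parts; a Ramsey-type argument on the reduced graph exploiting colour-criticality more globally; or an averaging argument showing that some convenient colour class can be made to dominate locally at the internal edge. Handling this obstacle is the essential content of the conjecture, and is presumably why the problem remains open beyond Theorem~\ref{bcrthm} (cliques, where the Ramsey graph $K_{R^r(\ell)}$ itself serves as a small colour-critical Ramsey witness and Tur\'an's theorem suffices) and beyond the case $r=1$.
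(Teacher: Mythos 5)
This is stated in the paper as Conjecture~\ref{conj1} and is \emph{not proved there}: the paper offers no proof, and to the best of our knowledge the conjecture remains open beyond the case $r=1$ (Simonovits) and the clique case (Theorem~\ref{bcrthm}). So there is no paper proof to compare against. Your write-up is honestly framed as a plan rather than a proof, and you have correctly located the obstruction: after a multicolour stability step puts a near-extremal non-$(H,\ldots,H)$-Ramsey $G$ close to $T_s(n)$ with $s := R_\chi(H,\ldots,H)-1$, a fixed colour class $\sigma^{-1}(i)$ need not inherit the near-complete cross-part structure, so the Simonovits cleaning-plus-greedy-embedding argument cannot be run inside a single colour. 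That is precisely the point at which a naive generalisation of the $r=1$ argument fails, and you are right that in the clique case the whole issue is sidestepped because a non-Ramsey graph is $K_{R^r(\ell)}$-free and Tur\'an's theorem closes the argument immediately.

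Two small technical remarks on the sketch as written. First, when invoking Erd\H{o}s--Simonovits stability for the reduced graph $R$, you should pin down the forbidden subgraph: $R$ (with its majority colouring) is non-$(H,\ldots,H)$-Ramsey, hence $F$-free for \emph{every} $(H,\ldots,H)$-Ramsey graph $F$; by definition of $R_\chi$ there is such an $F$ with $\chi(F)=s+1$, and stability relative to that $F$ is what you want (you do not need $F$ to be colour-critical for stability, only for the cleaning step, where its existence is a genuine extra hypothesis — cf.\ Corollary~\ref{structure2}). Second, the majority-colour counting/embedding lemma on regular pairs requires the colour-$i$ density on each relevant pair to be bounded below by a constant depending on $r$, which is fine for fixed $r$ but should be carried through the regularity constants. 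Neither point changes the main conclusion: the final embedding step is the real gap, and resolving it (via colour-level stability, a cleverer reduced-graph argument, or otherwise) is exactly what a proof of Conjecture~\ref{conj1} would require.
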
%
\COMMENT{KS: This is more of a reminder for us to think about this conjecture! I feel like the proof should just be a standard application of the stability method.}
If true, this conjecture would also generalise a well-known result of Simonovits~\cite{simonovits} which extends Tur\'an's theorem to colour-critical graphs.
It would also determine $\ex^r(n;H,\ldots,H)$ explicitly whenever $H$ is an odd cycle.

\subsubsection{The typical structure of non-Ramsey graphs}\label{sec:struct}
There has been much interest in determining the \emph{typical} structure of an $H$-free graph. For example, Kolaitis, Pr\"omel and Rothschild~\cite{kpr} proved that almost all $K_r$-free graphs are $(r-1)$-partite.
It turns out that one can easily obtain a result on the typical structure of non-Ramsey graphs from a result of Pr\"omel and Steger~\cite{promelsteger}.

Given two families $\mathcal{A}(n),\mathcal{B}(n)$ of $n$-vertex graphs such that $\mathcal{B}(n)\subseteq\mathcal{A}(n)$, we say that \emph{almost all} $n$-vertex graphs $G \in \mathcal{A}(n)$ are in $\mathcal{B}(n)$ if
$$
\lim_{n \rightarrow \infty}\frac{|\mathcal{A}(n)|}{|\mathcal{B}(n)|} = 1.
$$

The next result of  Pr\"omel and Steger~\cite{promelsteger} immediately tells us the typical structure of non-Ramsey graphs in certain cases.
\begin{theorem}[\cite{promelsteger}]\label{ps}
For every graph $H$, the following holds.
Almost all $H$-free graphs are $(\chi(H)-1)$-partite if and only if $H$ is colour-critical.
\end{theorem}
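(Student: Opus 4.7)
The statement is an equivalence. Write $\chi := \chi(H)$ and $T := T_{\chi-1}(n)$; let $\mathcal{R}_n$ denote the family of $H$-free graphs on $[n]$ and $\mathcal{Q}_n$ its subfamily of $(\chi-1)$-partite graphs. I would prove the two directions separately.

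\textbf{Forward direction (colour-critical $\Rightarrow$ almost all $H$-free are $(\chi-1)$-partite).} The goal is to show $|\mathcal{R}_n \setminus \mathcal{Q}_n|/|\mathcal{R}_n| \to 0$, using that $|\mathcal{Q}_n| \geq 2^{t_{\chi-1}(n)}$ (from subgraphs of $T$). Three ingredients drive the argument. First, since $H$ is colour-critical, Simonovits' exact extremal theorem gives $\ex(n;H) = t_{\chi-1}(n)$ with $T$ the unique extremal graph for $n$ large. Second, the Erd\H{o}s--Simonovits stability theorem says every $H$-free graph with at least $t_{\chi-1}(n)-\varepsilon n^2$ edges can be made $(\chi-1)$-partite by deleting at most $\delta(\varepsilon)n^2$ edges, with $\delta(\varepsilon) \to 0$ as $\varepsilon \to 0$. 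Third, a container-type argument (or the classical Kleitman--Rothschild computation) shows that the number of $H$-free graphs with at most $t_{\chi-1}(n)-\varepsilon n^2$ edges is at most $2^{t_{\chi-1}(n)-cn^2}$ for some $c=c(\varepsilon)>0$. It then remains to bound the number of graphs that are close to, but not exactly, $(\chi-1)$-partite: the supersaturation form of Simonovits' theorem for colour-critical $H$ (adding any edge inside a part of a near-Tur\'an partition creates $\Omega(n^{v(H)-2})$ copies of $H$; cf.~Theorem~\ref{ess}) forces the `extra' edges to form a structured sparse pattern, and a careful enumeration of such patterns bounds their contribution by $o(2^{t_{\chi-1}(n)})$. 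The main obstacle is this last counting step, which is precisely where colour-criticality is decisive.

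\textbf{Backward direction (not colour-critical $\Rightarrow$ not almost all).} The central observation is: for any $(\chi-1)$-partition $\pi$ of $[n]$ into non-empty parts and any edge $e$ lying inside a part of $\pi$, the graph $T_\pi + e$ (the complete $(\chi-1)$-partite graph of $\pi$ together with $e$) is $H$-free. Indeed, any copy of $H$ in $T_\pi + e$ must use $e$ (since $T_\pi$ is $(\chi-1)$-chromatic, hence $H$-free), so $H-e \subseteq T_\pi$ forces $\chi(H-e) \leq \chi-1$, contradicting non-colour-criticality. If $\chi = 2$, then $H$ has at least two edges (otherwise $H = K_2$ is colour-critical), so every single-edge graph on $[n]$ is $H$-free and non-empty; since the only $1$-partite graph is empty, the ratio $|\mathcal{R}_n \setminus \mathcal{Q}_n|/|\mathcal{R}_n|$ is bounded below. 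If $\chi \geq 3$, I would enumerate graphs $T_\pi + e$ as $\pi$ ranges over approximately balanced $(\chi-1)$-partitions of $[n]$ and $e$ over inside-edges of $\pi$; each such graph has more than $t_{\chi-1}(n)$ edges and so cannot be $(\chi-1)$-partite. Taking subgraphs of $T_\pi + e$ containing $e$ and retaining most cross-edges of $\pi$, and controlling the multiplicity with which distinct triples $(\pi, e, S)$ produce the same graph, yields a family of $H$-free non-partite graphs of size comparable to $|\mathcal{Q}_n|$; hence at least a positive fraction of $\mathcal{R}_n$ are non-partite.
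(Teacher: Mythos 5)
The paper does not actually prove Theorem~\ref{ps}; it is cited directly from Pr\"omel and Steger~\cite{promelsteger} and used as a black box in Corollary~\ref{structure2}, so there is no in-paper argument to compare against. Evaluated on its own terms, your outline has the right skeleton but leaves the genuinely hard quantitative step unresolved in each direction. For the forward direction, the chain (Simonovits' exact result for colour-critical $H$) $\to$ (Erd\H{o}s--Simonovits stability) $\to$ (container/Kleitman--Rothschild bound on sparse $H$-free graphs) $\to$ (structured counting of near-Tur\'an configurations) is the standard skeleton, and invoking supersaturation (any inside edge of a near-$T_{\chi-1}(n)$ graph sits in $\Omega(n^{v(H)-2})$ copies of $H$) is the right ingredient. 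But you then assert that ``a careful enumeration of such patterns bounds their contribution by $o(2^{t_{\chi-1}(n)})$'', which is precisely the content of Pr\"omel--Steger's theorem: controlling the $H$-free graphs that are close to, yet not exactly, $(\chi-1)$-partite is where essentially all the work lies and where colour-criticality is used in a detailed case analysis. Nothing in your outline indicates how that enumeration would be carried out, so the forward direction as written has a genuine gap rather than just a missing reference.

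In the backward direction, the observation that $T_\pi + e$ is $H$-free when $H$ is not colour-critical is exactly the right key idea (any embedded copy of $H$ must map some edge $f$ to $e$, placing $H-f$ inside a $(\chi-1)$-chromatic graph and forcing $\chi(H-f)\le \chi-1$). The counting you propose, however, does not work as stated. ``Retaining most cross-edges of $\pi$'' yields at most $\sum_{m \le \epsilon n^2}\binom{t_{\chi-1}(n)}{m}$ subgraphs, which for small $\epsilon$ is exponentially smaller than $|\mathcal{Q}_n| \approx \frac{(\chi-1)^n}{(\chi-1)!}2^{t_{\chi-1}(n)}$; and retaining most cross-edges does not by itself preclude $(\chi-1)$-partiteness, since a subgraph missing all edges from one endpoint of $e$ to a single part $V_j$ loses only $O(n)$ edges yet becomes $(\chi-1)$-partite after moving that endpoint into $V_j$. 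The correct repair is to range $G'$ over essentially \emph{all} subgraphs of $T_\pi$, argue via a union bound over perturbed partitions that a $(1-o(1))$ fraction of them admit $\pi$ as their unique $(\chi-1)$-partition, so $G'+e$ is forced non-$(\chi-1)$-partite, and then sum over balanced $\pi$ (to recover the $(\chi-1)^n$ factor) while controlling the multiplicity with which each $F=G'+e$ arises. Your last sentence gestures at multiplicity control but rests on a construction that would not produce a positive fraction of $\mathcal{R}_n$.
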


\begin{corollary}\label{structure2}
For all integers $r$ and graphs $H_1,\ldots,H_r$, if there exists an $(H_1,\ldots,H_r)$-Ramsey graph $H$ such that $\chi(H) = R_\chi(H_1,\ldots,H_r)$ and $H$ is colour-critical, then 
almost every non-$(H_1,\ldots,H_r)$-Ramsey graph is $(R_\chi(H_1,\ldots,H_r)-1)$-partite.%
\COMMENT{other direction? conclusion not true if there is no such $H$?}
\end{corollary}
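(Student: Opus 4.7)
The plan is to reduce the corollary to Theorem~\ref{ps} of Pr\"omel and Steger by sandwiching the class of non-$(H_1,\ldots,H_r)$-Ramsey graphs between the class of $(k-1)$-partite graphs and the class of $H$-free graphs, where $k := R_\chi(H_1,\ldots,H_r)$ and $H$ is the colour-critical Ramsey graph of chromatic number $k$ given by hypothesis.

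First I would set up three families on vertex set $[n]$: let $\mathcal{C}(n)$ be the $(k-1)$-partite graphs, $\mathcal{B}(n)$ the non-$(H_1,\ldots,H_r)$-Ramsey graphs, and $\mathcal{A}(n)$ the $H$-free graphs. The goal is to verify
\[
\mathcal{C}(n) \;\subseteq\; \mathcal{B}(n) \;\subseteq\; \mathcal{A}(n).
\]
The first inclusion is immediate from the definition of $R_\chi$: any $(k-1)$-partite graph has chromatic number at most $k-1 = R_\chi(H_1,\ldots,H_r)-1$, so by minimality in the definition of $R_\chi$ it cannot be $(H_1,\ldots,H_r)$-Ramsey. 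The second inclusion follows because if $G \supseteq H$ then any $r$-colouring of $E(G)$ restricts to an $r$-colouring of $E(H)$, which, since $H$ is $(H_1,\ldots,H_r)$-Ramsey, yields a monochromatic $H_i$ of colour $i$ for some $i \in [r]$; hence $G$ is $(H_1,\ldots,H_r)$-Ramsey whenever it contains $H$.

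Next I would invoke Theorem~\ref{ps}: since $H$ is colour-critical, almost all graphs in $\mathcal{A}(n)$ are $(\chi(H)-1) = (k-1)$-partite, i.e.\ $|\mathcal{C}(n)|/|\mathcal{A}(n)| \to 1$. Combining with the chain of inclusions gives
\[
1 \;\geq\; \frac{|\mathcal{C}(n)|}{|\mathcal{B}(n)|} \;\geq\; \frac{|\mathcal{C}(n)|}{|\mathcal{A}(n)|} \;\to\; 1,
\]
so $|\mathcal{C}(n)|/|\mathcal{B}(n)| \to 1$, proving that almost every non-$(H_1,\ldots,H_r)$-Ramsey graph is $(k-1)$-partite.

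There is no real obstacle here: the whole argument is a sandwiching of set sizes, and the only non-trivial ingredient (Theorem~\ref{ps}) is quoted as a black box. The one point requiring slight care is that the hypothesis gives us an $(H_1,\ldots,H_r)$-Ramsey graph $H$ with $\chi(H) = R_\chi(H_1,\ldots,H_r)$ which is \emph{itself} colour-critical; this is exactly what is needed so that Theorem~\ref{ps} produces the correct partition class count $\chi(H)-1 = R_\chi(H_1,\ldots,H_r)-1$, matching the lower bound coming from the first inclusion.
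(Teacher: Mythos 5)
Your proposal is correct and takes essentially the same approach as the paper: the paper's proof is precisely the same sandwich of the non-$(H_1,\ldots,H_r)$-Ramsey graphs between the $(R_\chi(H_1,\ldots,H_r)-1)$-partite graphs and the $H$-free graphs, followed by an application of Theorem~\ref{ps}, stated there in one sentence without the explicit ratio computation you spell out.
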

\proof
The result follows since every non-$(H_1,\ldots,H_r)$-Ramsey graph $G$ is $H$-free, and every \\ $(R_\chi(H_1,\ldots,H_r)-1)$-partite graph is non-$(H_1,\ldots,H_r)$-Ramsey.
\endproof

In particular, if in Corollary~\ref{structure2}, each $H_i$ is a clique, say $H_i = K_{\ell_i}$, then by Lemma~\ref{chromram} we can take $H := K_{R(\ell_1,\ldots,\ell_r)}$.
So, for example, almost every non-$(K_3,2)$-Ramsey graph is $5$-partite.

\subsection{A container theorem for Ramsey hypergraphs}

Recall that $\overline{\mathrm{Ram}}(n;H_1,\ldots,H_r)$ is the set of $n$-vertex $k$-graphs which are not $(H_1,\ldots,H_r)$-Ramsey and $\mathrm{Ram}(H_1,\ldots,H_r)$ is the set of $(H_1,\ldots,H_r)$-Ramsey $k$-graphs (on any number of vertices).
Recall further that an $H$-free $k$-graph is precisely a non-$(H,1)$-Ramsey graph.
Write $\mathcal{G}_k(n)$ for the set of all $k$-graphs on vertex set $[n]$.
Let $\mathcal{I}_r(n;H_1,\ldots,H_r)$ denote the set of all ordered $r$-tuples $(G_1,\ldots,G_r) \in (\mathcal{G}_k(n))^r$ of $k$-graphs such that each $G_i$ is $H_i$-free and $E(G_i) \cap E(G_j) = \emptyset$ for all distinct $i,j \in [r]$.
Note that for any $G \in \overline{\mathrm{Ram}}(n;H_1,\ldots,H_r)$, there exist pairwise edge-disjoint $k$-graphs $G_1,\ldots,G_r$ such that $\bigcup_{i \in [r]}G_i = G$ and $(G_1,\ldots,G_r) \in \mathcal{I}_r(n;H_1,\ldots,H_r)$.
In this subsection, we prove a container theorem for elements in $\mathcal{I}_r(n;H_1,\ldots,H_r)$.
To do so, we will apply Proposition~\ref{asymmulti} to hypergraphs $\mathcal{H}_1,\ldots,\mathcal{H}_r$, where $\mathcal{H}_i$ is the hypergraph of copies of $H_i$ (see Definition~\ref{copiesgraph}).
In $\mathcal{H}_i$, an independent set corresponds to an $H_i$-free $k$-graph.

\begin{definition}\label{copiesgraph}
Given an integer $k \geq 2$, a $k$-graph $H$ and positive integer $n$, the \emph{hypergraph $\mathcal{H}$ of copies of $H$ in $K^{(k)}_n$} has vertex set $V(\mathcal{H}) := \binom{[n]}{k}$, and $E \subseteq \binom{V(\mathcal{H})}{e(H)}$ is an edge of $\mathcal{H}$ if and only if $E$ is isomorphic to $E(H)$. 
\end{definition}

We will need the following simple proposition from~\cite{container1}.

\begin{proposition}[\cite{container1}, Proposition~7.3]\label{copieshyp}
Let $H$ be a $k$-graph.
Then there exists $c>0$ such that, for all positive integers $n$, the following holds.
Let $\mathcal{H}$ be the $e(H)$-uniform hypergraph of copies of $H$ in $K^{(k)}_n$.
Then, letting $p = n^{-1/m_k(H)}$,
$$
\Delta_\ell(\mathcal{H}) \leq c \cdot p^{\ell-1}\frac{e(\mathcal{H})}{v(\mathcal{H})},
$$
for every $\ell \in [e(H)]$.
\end{proposition}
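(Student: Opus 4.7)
\textbf{Proof proposal for Proposition~\ref{copieshyp}.} The plan is to reduce $\Delta_\ell(\mathcal{H})$ to a purely graph-theoretic extension count over subhypergraphs $H' \subseteq H$ with exactly $\ell$ edges, and then to use the definition of $m_k(H)$ as a maximum of $d_k(H')$ to convert this count into the required bound involving $p$.

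First, I would fix $\ell \in [e(H)]$ and any $\ell$-set $T = \{E_1,\dots, E_\ell\} \subseteq V(\mathcal{H}) = \binom{[n]}{k}$ consisting of $\ell$ distinct $k$-subsets of $[n]$. If $T$ is contained in some edge of $\mathcal{H}$, then $T$ spans a sub-$k$-graph $H_T$ on $j := v(H_T)$ vertices with $\ell$ edges (ignore isolated vertices), and $H_T$ is isomorphic to some $H' \subseteq H$ with $e(H') = \ell$. Every copy of $H$ containing all of $T$ as edges arises by choosing an injection from $V(H) \setminus V(H')$ into $[n] \setminus V(H_T)$, of which there are at most $n^{v(H) - j}$. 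Since there are only a bounded number (depending on $H$) of candidate $H' \subseteq H$ and labellings, we obtain
\[
\Delta_\ell(\mathcal{H}) \;\leq\; C_H \max\bigl\{ n^{v(H) - v(H')} : H' \subseteq H,\ e(H')=\ell \bigr\},
\]
for some constant $C_H > 0$ depending only on $H$.

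Next, I would compare this to $p^{\ell-1} e(\mathcal{H}) / v(\mathcal{H})$. Since the number of copies of $H$ in $K^{(k)}_n$ is $\Theta_H(n^{v(H)})$ and $v(\mathcal{H}) = \binom{n}{k} = \Theta_k(n^k)$, we have $e(\mathcal{H})/v(\mathcal{H}) = \Theta_H(n^{v(H)-k})$. Therefore, with $p = n^{-1/m_k(H)}$, the desired inequality reduces to showing that for every $H' \subseteq H$ with $e(H') = \ell$,
\[
n^{v(H)-v(H')} \;\leq\; c'_H \cdot n^{v(H)-k - (\ell-1)/m_k(H)},
\]
equivalently $v(H') - k \geq (\ell-1)/m_k(H)$, provided $v(H') > k$.

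When $v(H') > k$, this is exactly $m_k(H) \geq (\ell-1)/(v(H')-k) = (e(H')-1)/(v(H')-k) = d_k(H')$, which holds by the definition $m_k(H) = \max_{H' \subseteq H} d_k(H')$. The remaining case $v(H')=k$ forces $\ell=1$, for which the required bound $\Delta_1(\mathcal{H}) = O(n^{v(H)-k})$ is immediate. There is no real obstacle here; the only mildly delicate point is handling the edge case $v(H)=k$, $e(H)=1$ (where $m_k(H) = 1/k$), which is easily checked directly since then $\mathcal{H}$ is $1$-uniform with $\Delta_1(\mathcal{H}) = 1 = e(\mathcal{H})/v(\mathcal{H})$. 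Choosing $c$ to absorb $C_H$ and all implicit constants finishes the proof.
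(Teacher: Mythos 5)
Your proof is correct. Note, however, that the paper does not prove this proposition itself: it is quoted verbatim from Balogh, Morris and Samotij~\cite{container1} (their Proposition~7.3), so there is no proof in the present paper to compare against.

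Your argument is the natural one and is essentially the calculation used in~\cite{container1}. You fix an $\ell$-set $T$ of $k$-sets, observe that if $\deg_{\mathcal H}(T)>0$ then $T$ is isomorphic to the edge set of some $H'\subseteq H$ with $e(H')=\ell$, bound the number of extensions to a full copy of $H$ by $O_H(n^{v(H)-v(H')})$, and compare to $e(\mathcal H)/v(\mathcal H)=\Theta_H(n^{v(H)-k})$. The crucial reduction $v(H')-k\ge (\ell-1)/m_k(H)$ is exactly $m_k(H)\ge d_k(H')$, which holds by definition, and your separate treatment of $v(H')=k$ (forcing $\ell=1$) and of the degenerate case $v(H)=k$, $e(H)=1$ is also correct. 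The one implicit convention worth making explicit is that $e(H)\ge 1$ so that $m_k(H)>0$ and $p$ is well defined; with that caveat the proposal is a complete and correct proof.
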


We can now prove our container theorem for elements in $\mathcal{I}_r(n;H_1,\ldots,H_r)$.

\begin{theorem}\label{ramseycont}
Let $r,k \in \mathbb{N}$ with $k \geq 2$ and $\delta > 0$.
Let $H_1,\ldots,H_r$ be $k$-graphs such that $m_k(H_1) \geq \ldots \geq m_k(H_r)$ and $\Delta_1(H_i) \geq 2$ for all $i \in [r]$.
Then there exists $D > 0$ such that the following holds.
For all $n \in \mathbb{N}$, there is a collection $\mathcal{S}_r \subseteq (\mathcal{G}_k(n))^r$ and a function $f:\mathcal{S}_r \rightarrow (\mathcal{G}_k(n))^r$ such that:
\begin{itemize}
\item[(i)] For all $(I_1,\ldots,I_r) \in \mathcal{I}_r(n;H_1,\ldots,H_r)$, there exists $S \in \mathcal{S}_r$ such that $S \subseteq (I_1,\ldots,I_r) \subseteq f(S)$.
\item[(ii)] If $(S_1,\ldots,S_r) \in \mathcal{S}_r$ then $\sum_{i \in [r]}e(S_i) \leq Dn^{k-1/m_k(H_1)}$.
\item[(iii)] Every $S \in \mathcal{S}_r$ satisfies $S \in \mathcal{I}_r(n;H_1,\ldots,H_r)$.%
\COMMENT{Why not say $\mathcal{S}_r \subseteq \mathcal{I}_r(n;H_1,\ldots,H_r)$?}
\item[(iv)] Given any $S = (S_1,\ldots,S_r) \in \mathcal{S}_r$, write $f(S) =: (f(S_1),\ldots,f(S_r))$. Then
\begin{itemize}
\item[(a)] $\bigcup_{i \in [r]}f(S_i)$ is $\delta$-weakly $(H_1,\ldots,H_r)$-Ramsey; and
\item[(b)] $e\left(\bigcup_{i \in [r]}f(S_i)\right) \leq \ex ^r(n;H_1,\ldots,H_r)+\delta\binom{n}{k}$.
\end{itemize}
\end{itemize}
\end{theorem}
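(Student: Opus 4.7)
The plan is to apply the asymmetric container result, Proposition~\ref{asymmulti}, to an appropriate family of hypergraphs built from the $H_i$. For each $i \in [r]$, let $\mathcal{H}_i$ be the $e(H_i)$-uniform hypergraph of copies of $H_i$ in $K^{(k)}_n$ in the sense of Definition~\ref{copiesgraph}; the $\mathcal{H}_i$ share the common vertex set $V := \binom{[n]}{k}$, and the elements of $\mathcal{I}(\mathcal{H}_1,\ldots,\mathcal{H}_r)$ are precisely those of $\mathcal{I}_r(n;H_1,\ldots,H_r)$. The assumption $\Delta_1(H_i) \geq 2$ forces $e(H_i) \geq 2$, so each $\mathcal{H}_i$ is at least $2$-uniform, as Proposition~\ref{asymmulti} requires. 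For a small auxiliary $\eps' > 0$ to be chosen in terms of $\delta$ and the $H_i$, I would take $\mathcal{F}_i := \{A \subseteq V : e(\mathcal{H}_i[A]) \geq \eps' e(\mathcal{H}_i)\}$, which is trivially increasing and makes $\mathcal{H}_i$ $(\mathcal{F}_i,\eps')$-dense.

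To apply Proposition~\ref{asymmulti} with $p := n^{-1/m_k(H_1)}$, the edge-degree bound $\Delta_\ell(\mathcal{H}_i) \leq c\,p^{\ell-1} e(\mathcal{H}_i)/v(\mathcal{H}_i)$ follows from Proposition~\ref{copieshyp}, using that $p \geq n^{-1/m_k(H_i)}$ since $m_k(H_1) \geq m_k(H_i)$. The main obstacle is the remaining hypothesis $|A| \geq \eps |V|$ for $A \in \mathcal{F}_i$: a naive bound using $\binom{|A|}{e(H_i)}$ only yields $|A| = \Omega(n^{v(H_i)/e(H_i)})$, which is weaker than $\Omega(\binom{n}{k})$ whenever $H_i$ is strictly denser than a $k$-matching. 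To overcome this I would double-count incidences between copies of $H_i$ in $A$ and their edges, obtaining $e(H_i)\cdot e(\mathcal{H}_i[A]) \leq |A|\cdot \Delta_1(\mathcal{H}_i)$, and then combine this with the $\ell=1$ case of Proposition~\ref{copieshyp} to deduce $|A| \geq (\eps' e(H_i)/c)\binom{n}{k}$. Proposition~\ref{asymmulti} then yields $\mathcal{S}_r$, a function $g$, and $f':\mathcal{S}_r \to \prod_{i \in [r]}\overline{\mathcal{F}_i}$; I would set $f(S) := S \cup f'(S)$ componentwise, exactly as in the proof of Theorem~\ref{matcontainer}. Parts (i)--(iii) of the theorem are then immediate: (i) from Proposition~\ref{asymmulti}(ii), (ii) from the bound $Cp\binom{n}{k} \leq Dn^{k-1/m_k(H_1)}$, and (iii) because $\mathcal{I}(\mathcal{H}_1,\ldots,\mathcal{H}_r) = \mathcal{I}_r(n;H_1,\ldots,H_r)$.

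Finally, to establish (iv), first observe that each $f'(S_i) \in \overline{\mathcal{F}_i}$ contains fewer than $\eps' e(\mathcal{H}_i)$ copies of $H_i$, while the vertices of $S_i$ lie in at most $|S_i|\Delta_1(\mathcal{H}_i) \leq Ccp\, e(\mathcal{H}_i) = o(e(\mathcal{H}_i))$ further copies since $p \to 0$. Hence for $n$ large and $\eps'$ small, $f(S_i)$ contains at most $\delta'\binom{n}{v(H_i)}$ copies of $H_i$, where $\delta' = \delta'(\eps', H_i)$ can be made as small as we like. Colouring each edge of $\bigcup_{i \in [r]} f(S_i)$ by an arbitrary index $i$ for which the edge lies in $f(S_i)$ produces a $\delta'$-weakly $(H_1,\ldots,H_r)$-Ramsey colouring, so choosing $\delta' \leq \delta$ gives (iv)(a). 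For (iv)(b) I would invoke the multi-colour supersaturation result Theorem~\ref{epsclose}: choosing $\delta'$ no larger than the constant $\eps_0 = \eps_0(\delta)$ that Theorem~\ref{epsclose} supplies when applied with parameter $\delta$, its contrapositive forces $e(\bigcup_{i \in [r]} f(S_i)) < (\pi(H_1,\ldots,H_r) + \delta)\binom{n}{k}$, which is at most $\ex^r(n;H_1,\ldots,H_r) + \delta\binom{n}{k}$ since $\ex^r(n;H_1,\ldots,H_r) \geq \pi(H_1,\ldots,H_r)\binom{n}{k}$ by the averaging argument of Section~\ref{maxsec}.
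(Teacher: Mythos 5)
Your proposal is correct and follows essentially the same route as the paper's proof: build the $e(H_i)$-uniform hypergraphs of copies of $H_i$ on $V = \binom{[n]}{k}$, verify the hypotheses of Proposition~\ref{asymmulti} with $p = n^{-1/m_k(H_1)}$ via Proposition~\ref{copieshyp}, set $f(S) := S \cup f'(S)$, and invoke Theorem~\ref{epsclose} for part (iv). The one place you diverge is how you obtain the lower bound $|A| \geq \eps\binom{n}{k}$ for $A \in \mathcal{F}_i$: you double-count copy--edge incidences and feed in the $\ell=1$ degree bound from Proposition~\ref{copieshyp}, whereas the paper directly bounds the number of copies of $H_i$ through a fixed $k$-set by $v_i!\binom{n-k}{v_i-k}$ and compares to the exact count $e(\mathcal{H}_i) = \frac{v_i!}{|\mathrm{Aut}(H_i)|}\binom{n}{v_i}$; the two computations are equivalent, but your version avoids the explicit automorphism-group bookkeeping and reuses a bound already in hand, which is marginally cleaner.
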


Note that if $H$ is a $k$-graph with $\Delta_1(H)=1$, then $H$ is a matching, i.e.~a set of vertex-disjoint edges.

\begin{proof}
We will identify any hypergraph which has vertex set $[n]$ with its edge set.
It suffices to prove the theorem when $n$ is sufficiently large; otherwise we can set $\mathcal{S}_r$
to be $\mathcal{I}_r(n;H_1,\ldots,H_r)$; set $f$ to be the identity function and choose $D$ to be large.
We may further assume that there are no isolated vertices in $H_i$ for any $i \in [r]$.

Apply Proposition~\ref{copieshyp} with input hypergraphs $H_1,\ldots,H_r$ to obtain $c >0$ such that its conclusion holds with $H_i$ playing the role of $H$, for all $i \in [r]$.
Let $\delta > 0$, $r \in \mathbb{N}$ and $k \geq 2$ be given and apply Theorem~\ref{epsclose} (with $\delta /2$ playing the role of $\delta$) to obtain $n_0,\eps>0$.
Without loss of generality we may assume $\eps \leq \delta < 1$. For each $i \in [r]$, let $v_i:=v(H_i)$ and 
 $m_i := e(H_i)$ for all $i \in [r]$. Set $v:=\max_{i \in [r]}v_i$; $m := \max_{i \in [r]}m_i$;
$$
\eps' := \frac{\eps}{2\cdot v!}; \quad\text{and}\quad \eps'' := \frac{\eps'}{\binom{v}{k}\cdot v!}.
$$

Apply Proposition~\ref{asymmulti} with parameters $r,m_1,\dots,m_r,c,\eps''$ playing the roles of $r,k_1,\ldots,k_r,c,\eps$ respectively to obtain $D>0$.
Increase $n_0$ if necessary so that $0 < 1/n_0 \ll 1/D,1/k,1/r,\eps,\delta$ and let $n \geq n_0$ be an integer.

Let $\mathcal{H}_{n,i}$ be the hypergraph of copies of $H_i$ in $K^{(k)}_n$.
That is, $V(\mathcal{H}_{n,i}) := \binom{[n]}{k}$ and for each $m_i$-subset $E$ of $\binom{[n]}{k}$, put $E \in E(\mathcal{H}_{n,i})$ if and only if $E$ is isomorphic to a copy of $H_i$.
By definition, $\mathcal{H}_{n,i}$ is an $m_i$-uniform hypergraph and an independent set in $\mathcal{H}_{n,i}$ corresponds to an $H_i$-free $k$-graph with vertex set $[n]$.
Since $H_i$ is a $k$-graph with no isolated vertices,
\begin{equation}\label{eHni}
e(\mathcal{H}_{n,i}) = \frac{v_i!}{|\mathrm{Aut}(H_i)|}\binom{n}{v_i}
\end{equation}
where $\mathrm{Aut}(H_i)$ is the automorphism group of $H_i$.
For all $i \in [r]$, let
$$
\mathcal{F}_{n,i} := \left\lbrace A \subseteq \binom{[n]}{k}: e(\mathcal{H}_{n,i}[A]) \geq \eps' e(\mathcal{H}_{n,i}) \right\rbrace.
$$
We claim that $\mathcal{H}_{n,1},\ldots,\mathcal{H}_{n,r}$ and $\mathcal{F}_{n,1},\ldots,\mathcal{F}_{n,r}$ satisfy the hypotheses of Proposition~\ref{asymmulti} with the parameters chosen as above and with 
$$
p = p(n) := n^{-1/m_k(H_1)}.
$$

Clearly each family $\mathcal{F}_{n,i}$ is increasing, and $\mathcal{H}_{n,i}$ is $(\mathcal{F}_{n,i},\eps')$-dense.
Next, we show that $|A| \geq \eps''\binom{n}{k}$ for all $A \in \mathcal{F}_{n,i}$.
In any $k$-graph on $n$ vertices, there are at most $v_i !\binom{n-k}{v_i-k}$ copies of $H_i$ that contain some fixed set $\lbrace x_1,\ldots,x_k \rbrace$ of vertices.
Therefore, for every $e \in \binom{[n]}{k}$, 
the number of $E \in E(\mathcal{H}_{n,i})$ containing $e$ is at most
\begin{equation}\label{copycount}
v_i !\binom{n-k}{v_i-k}.
\end{equation}
Thus every $A \in \mathcal{F}_{n,i}$ satisfies
$$
|A| \geq \frac{e(\mathcal{H}_{n,i}[A])}{v_i!\binom{n-k}{v_i-k}} \stackrel{(\ref{eHni})}{\geq} \frac{\eps'v_i!\binom{n}{v_i}}{v_i!\binom{n-k}{v_i-k}|\mathrm{Aut}(H_i)|}= \frac{\eps'}{\binom{v}{k}|\mathrm{Aut}(H_i)|}\binom{n}{k}
\geq \eps''\binom{n}{k},
$$
where, in the final inequality, we used the fact that $|\mathrm{Aut}(H_i)| \leq v_i!$.
Note that $\eps'' < \eps'$.
So $\mathcal{H}_{n,i}$ is $(\mathcal{F}_{n,i},\eps'')$-dense and $|A| \geq \eps''\binom{n}{k}$ for all $A \in \mathcal{F}_{n,i}$.

Certainly $p \geq n^{-1/m_k(H_j)}$ for all $j \in [r]$.
By the choice of $c$, we then have
$$
\Delta_{\ell}(\mathcal{H}_{n,i}) \leq c \cdot p^{\ell-1} \frac{e(\mathcal{H}_{n,i})}{\binom{n}{k}}
$$
for all $i \in [r]$ and $\ell \in [m_i]$.
We have shown that $\mathcal{H}_{n,i}$ and $\mathcal{F}_{n,i}$ satisfy the hypotheses of Proposition~\ref{asymmulti} for all $i \in [r]$.
\medskip

Then Proposition~\ref{asymmulti} implies that there exists a family $\mathcal{S}_r \subseteq \prod_{i \in [r]}\mathcal{P}(V(\mathcal{H}_{n,i})) = \mathcal{P}(\binom{[n]}{k})^r$ and  functions $f':\mathcal{S}_r \to \prod_{i \in [r]}\overline{\mathcal{F}_{n,i}}$ and $g:\mathcal{I}(\mathcal{H}_{n,1},\ldots,\mathcal{H}_{n,r}) \to \mathcal S_r$
such that the following conditions hold:
\begin{itemize}
\item[(a)] If $(S_1,\dots, S_r) \in \mathcal S_r$ then $\sum |S_i|\leq Dp \binom{n}{k}$;
\item[(b)] every $S \in \mathcal{S}_r$ satisfies $S \in \mathcal{I}(\mathcal{H}_{n,1},\ldots,\mathcal{H}_{n,r})$;
\item[(c)] for every $(I_1,\dots,I_r) \in \mathcal{I}(\mathcal{H}_{n,1},\ldots,\mathcal{H}_{n,r})$, we have  that $S \subseteq (I_1,\dots,I_r) \subseteq S\cup f'(S)$, where $S:=g( I_1,\dots, I_r)$.
\end{itemize}

Note that $(G_1,\ldots,G_r) \in \mathcal{I}(\mathcal{H}_{n,1},\ldots,\mathcal{H}_{n,r})$ if and only if $(G_1,\ldots,G_r) \in \mathcal{I}_r(n;H_1,\ldots,H_r)$ (where we recall the identification of graphs and edge sets).
For each $S \in \mathcal{S}_r$, define
$$
f(S) := S \cup f'(S).
$$
So $f : \mathcal{S}_r \rightarrow \mathcal{P}(\binom{[n]}{k})^r$. (Note that under the correspondence of graphs and edge sets we can view $\mathcal{P}(\binom{[n]}{k})^r=(\mathcal{G}_k(n))^r$.)
Thus (a)--(c) immediately imply that (i) and (iii) hold, and additionally for  any $(S_1,\ldots,S_r) \in \mathcal{S}_r$  we have
$$
\sum_{i \in [r]}e(S_i) \leq Dp\binom{n}{k} \leq Dn^{-1/m_k(H_1)} \cdot \frac{n^k}{k!} < Dn^{k-1/m_k(H_1)},
$$
yielding (ii).

Given any $S=(S_1,\ldots,S_r) \in \mathcal{S}_r$ write $f(S) =: (f(S_1),\ldots,f(S_r))$ and $f'(S) = :(f'(S_1),\ldots,f'(S_r))$.
Let $G := \bigcup_{i \in [r]}f(S_i)$; so $G$ is a $k$-graph with vertex set $[n]$.
To prove (iv)(a), we need to exhibit an $r$-colouring $\sigma$ of $G$ with the property that $\sigma^{-1}(i)$ contains less than $\eps\binom{n}{v_i}$ copies of $H_i$ for all $i \in [r]$.
Indeed, consider the $r$-colouring $\sigma$ of $G$ defined by setting $\sigma(e)=i$ when $i$ is the least integer such that $e \in f(S_i)$.
Then the subgraph of $G$ coloured $i$ is $\sigma^{-1}(i) \subseteq f(S_i) = S_i \cup f'(S_i)$.
Since $S_i$ is an independent set in $\mathcal{H}_{n,i}$, we have that $S_i$ is $H_i$-free.
Every copy of $H_i$ in $\sigma^{-1}(i)$ either contains at least one edge in $S_i$, or has every edge contained in $f'(S_i)$.
Note that  $m_k(H_1)\leq m$.
By~(\ref{copycount}), the number of copies of $H_i$ in $G$ containing at least one edge in $S_i$ is at most
\begin{align*}
e(S_i) \cdot v_i!\binom{n-k}{v_i-k} &\leq Dn^{k-1/m_k(H_1)} \cdot v_i! {(n-k)^{v_i-k}}
\leq {Dv_i!}\cdot n^{v_i-\frac{1}{m}} < \frac{\eps}{2}\binom{n}{v_i}.
\end{align*}
For each $i \in [r]$ we have that $f'(S_i) \in \overline{\mathcal{F}_{n,i}}$, and so $e(\mathcal{H}_{n,i}[f'(S_i)]) < \eps' e(\mathcal{H}_{n,i})$.
That is, the number of copies of $H_i$ in $f'(S_i)$ is less than
$$
\eps' \cdot \frac{v_i!}{|\mathrm{Aut}(H_i)|}\binom{n}{v_i} \leq \frac{\eps}{2}\binom{n}{v_i}.
$$
Thus, in total $f(S_i) = S_i \cup f'(S_i)$ contains at most $\eps\binom{n}{v_i}$ copies of $H_i$, so $G$ is $\eps$-weakly $(H_1,\ldots,H_r)$-Ramsey.
Since $\eps \leq \delta$, this immediately implies (iv)(a), and (iv)(b) follows from Theorem~\ref{epsclose}, our choice of parameters, and since $n$ is sufficiently large.
\end{proof}

As in Theorem~\ref{BMS}, we will call the elements $S \in \mathcal{S}_r$ \emph{fingerprints}, and each $\bigcup_{i \in [r]}f(S_i)$ with $(S_1,\ldots,S_r)\in \mathcal{S}_r$ is a \emph{container}.

\subsection{The number of hypergraphs which are not Ramsey}\label{sec54}

Our first application of Theorem~\ref{ramseycont} is an enumeration result for non-$(H_1,\ldots,H_r)$-Ramsey hypergraphs (Theorem~\ref{ramseycount}), which asymptotically determines the logarithm of $|\overline{\mathrm{Ram}}(n;H_1,\ldots,H_r)|$.

\medskip
\noindent
\emph{Proof of Theorem~\ref{ramseycount}.}
Let $0 < \delta < 1$ be arbitrary, and let $n \in \mathbb{N}$ be sufficiently large.
Clearly, $|\overline{\mathrm{Ram}}(n;H_1,\ldots,H_r)| \geq 2^{\ex^r(n;H_1,\ldots,H_r)}$ since no subhypergraph of an $n$-vertex non-$(H_1,\ldots,H_r)$-Ramsey $k$-graph with $\ex^r(n;H_1,\ldots,H_r)$ edges is $(H_1,\ldots,H_r)$-Ramsey.

For the upper bound,
suppose first that $\Delta_1(H_i)\geq 2$ for all $i \in [r]$.
Let $D>0$ be obtained from Theorem~\ref{ramseycont} applied to $H_1,\ldots,H_r$ with parameter $\delta$.
We obtain a collection $\mathcal{S}_r$ and a function $f$ as in Theorem~\ref{ramseycont}.
Consider any $G \in \overline{\mathrm{Ram}}(n;H_1,\ldots,H_r)$.
Note that there are pairwise edge-disjoint $k$-graphs $G_1,\ldots,G_r$ such that $\bigcup_{i\in[r]}G_i=G$ and $(G_1,\ldots,G_r) \in \mathcal{I}_r(n;H_1,\ldots,H_r)$.
So by Theorem~\ref{ramseycont}(i) this means there is some $S=(S_1,\ldots,S_r) \in \mathcal{S}_r$ so that $G \subseteq \bigcup_{i \in [r]}f(S_i)$.
Further, given any $S=(S_1,\ldots,S_r) \in \mathcal{S}_r$, we have
$$
e\left(\bigcup_{i \in [r]}f(S_i)\right) \leq \ex ^r(n;H_1,\ldots,H_r)+\delta \binom{n}{k}.
$$
Thus, each such $\bigcup_{i\in[r]}f(S_i)$ contains at most $2^{\ex ^r(n;H_1,\ldots,H_r)+\delta\binom{n}{k}}$ $k$-graphs in $\overline{\mathrm{Ram}}(n;H_1,\ldots,H_r)$.
Note that, by Theorem~\ref{ramseycont}(ii),
$$
|\mathcal{S}_r| \leq \left(\sum_{s=0}^{Dn^{k-1/m_k(H_1)}} \binom{\binom{n}{k}}{s}\right)^r < 2^{\delta\binom{n}{k}},
$$
where the last inequality holds since $n$ is sufficiently large.
Altogether, this implies
\begin{equation}\label{ubDelta}
|\overline{\mathrm{Ram}}(n;H_1,\ldots,H_r)| \leq 2^{\delta \binom{n}{k}} \times 2^{\ex ^r(n;H_1,\ldots,H_r)+\delta\binom{n}{k}} = 2^{\ex ^r(n;H_1,\ldots,H_r)+2\delta\binom{n}{k}}.
\end{equation}
Since the choice of $0 < \delta < 1$ was arbitrary, this proves the theorem in the case when $\Delta_1(H_i) \geq 2$ for all $i \in [r]$.

Suppose now that, say, $\Delta_1(H_1) = 1$.
Then $H_1$ is a  matching.\COMMENT{AT: I think it is better to implicitly assume $\Delta (H_1)\not =0$; since next sentence is false in this case.}
Certainly every non-$(H_2,\ldots,H_r)$-Ramsey $k$-graph is non-$(H_1,\ldots,H_r)$-Ramsey.
Let $H \in \overline{\mathrm{Ram}}(n;H_1,\ldots,H_r)$.
Then there exists an $r$-colouring $\sigma$ of $H$ such that $\sigma^{-1}(i)$ is $H_i$-free for all $i \in [r]$.
Thus $H$ is the union of pairwise edge-disjoint $k$-graphs $J \in \overline{\mathrm{Ram}}(n;H_2,\ldots,H_r)$ and $J' := \sigma^{-1}(1)$.
But $J'$ is $H_1$-free and hence does not contain a matching of size $\lfloor v(H_1)/2\rfloor =: h$.
A result of Erd\H{o}s~\cite{erdosmatch} (used here in a weaker form) implies that, for sufficiently large $n$,
$$
e(J') \leq (h-1)\binom{n-1}{k-1}.
$$
Thus, for large $n$,
\begin{align*}
|\overline{\mathrm{Ram}}(n;H_1,\ldots,H_r)| &\leq \sum_{J \in \overline{\mathrm{Ram}}(n;H_2,\ldots,H_r)}
\sum_{e(J')=0}^{(h-1)\binom{n-1}{k-1}} \binom{\binom{n}{k}}{e(J')} \\
&= |\overline{\mathrm{Ram}}(n;H_2,\ldots,H_r)| \sum_{e(J')=0}^{\frac{k(h-1)}{n}\binom{n}{k}}  
\binom{\binom{n}{k}}{e(J')}\\
&\leq |\overline{\mathrm{Ram}}(n;H_2,\ldots,H_r)| \cdot 2^{\delta\binom{n}{k}}.
\end{align*}
Iterating this argument, using~(\ref{ubDelta}) and the fact that $0 < \delta < 1$ was arbitrary, we obtain the required upper bound in the general case.
\hfill$\square$

\medskip
In fact Theorem~\ref{ramseycount} can be recovered in a different way, which, to the best of our knowledge,
has not been explicitly stated elsewhere.
Let $\mathcal{F}$ be a (possibly infinite) family of $k$-graphs, and let $\mathrm{Forb}(n;\mathcal{F})$ be the set of $n$-vertex $k$-graphs which contain no copy of any $F \in \mathcal{F}$ as a subhypergraph.
The following result of Nagle, R\"odl and Schacht~\cite{nrs} asymptotically determines the logarithm of $|\mathrm{Forb}(n;\mathcal{F})|$.
(This generalises the corresponding result of Erd\H{o}s, Frankl and R\"odl~\cite{efr} for graphs.)
Let
$$
\ex(n;\mathcal{F}) := \max\lbrace e(H):H \in \mathrm{Forb}(n;\mathcal{F})\rbrace. 
$$
(So when $\mathcal{F} = \lbrace F \rbrace$ contains a single $k$-graph, we have $\ex(n;\lbrace F \rbrace) = \ex(n;F)$.)

\COMMENT{AT: mention victor here?}

\begin{theorem}[Theorem 2.3, \cite{nrs}]\label{forbcount}
Let $k \geq 2$ be a positive integer and $\mathcal{F}$ be a (possibly infinite) family of $k$-graphs. Then, for all $n \in \mathbb{N}$,
$$
|\mathrm{Forb}(n;\mathcal{F})| = 2^{\ex(n;\mathcal{F})+o(n^k)}.
$$
\end{theorem}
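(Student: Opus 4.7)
The lower bound $|\mathrm{Forb}(n;\mathcal{F})|\geq 2^{\ex(n;\mathcal{F})}$ is immediate: every subhypergraph of an extremal $n$-vertex $\mathcal{F}$-free $k$-graph is itself $\mathcal{F}$-free. The content lies in the matching upper bound, and the plan is to apply the container method in the same spirit as in the proof of Theorem~\ref{ramseycount}.

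The first step is to reduce to a finite subfamily. Fix $\delta>0$. By the averaging argument used in Section~\ref{maxsec} for $\pi(H_1,\dots,H_r)$, the sequence $\ex(m;\mathcal{F})/\binom{m}{k}$ is non-increasing in $m$ and so converges to some $\pi(\mathcal{F})$. Choose $m_0=m_0(\delta)$ with $\ex(m_0;\mathcal{F})\leq (\pi(\mathcal{F})+\delta/4)\binom{m_0}{k}$, and set $\mathcal{F}_0 := \{F\in\mathcal{F} : v(F)\leq m_0\}$, which is finite up to isomorphism. A second averaging, over $m_0$-vertex induced subhypergraphs of an extremal $\mathcal{F}_0$-free graph (each of which is automatically $\mathcal{F}$-free, since every $F\in\mathcal{F}$ of at most $m_0$ vertices lies in $\mathcal{F}_0$), yields $\ex(n;\mathcal{F}_0)\leq\ex(n;\mathcal{F})+\delta n^k$ for large $n$. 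Since $\mathrm{Forb}(n;\mathcal{F})\subseteq\mathrm{Forb}(n;\mathcal{F}_0)$, it therefore suffices to show $|\mathrm{Forb}(n;\mathcal{F}_0)|\leq 2^{\ex(n;\mathcal{F}_0)+\delta n^k}$.

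For the finite family $\mathcal{F}_0$, I would first prove a supersaturation statement exactly as in Theorem~\ref{ess}: there exist $\eta,n_1>0$ such that every $n$-vertex $k$-graph $G$ with $e(G)\geq\ex(n;\mathcal{F}_0)+\delta n^k$ and $n\geq n_1$ contains at least $\eta n^{v(F)}$ copies of some $F\in\mathcal{F}_0$, the witness being chosen by pigeonholing across the finite family. I would then iterate Theorem~\ref{BMS}: starting from $\binom{[n]}{k}$ as the only container, at each stage any container $C$ with more than $\ex(n;\mathcal{F}_0)+\delta n^k$ edges supports $\Omega(n^{v(F)})$ copies of some $F\in\mathcal{F}_0$, so applying Theorem~\ref{BMS} to the hypergraph of copies of $F$ inside $C$ (with $p=n^{-1/m_k(F)}$, whose co-degree bounds are furnished by Proposition~\ref{copieshyp}) refines $C$ into at most $2^{o(n^k)}$ strictly smaller containers. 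A bounded-in-$\delta$ number of iterations produces a family $\mathcal{C}$ of $2^{o(n^k)}$ containers, each of size at most $\ex(n;\mathcal{F}_0)+\delta n^k$, covering $\mathrm{Forb}(n;\mathcal{F}_0)$; the bound $|\mathrm{Forb}(n;\mathcal{F}_0)|\leq|\mathcal{C}|\cdot 2^{\ex(n;\mathcal{F}_0)+\delta n^k}$ and sending $\delta\to 0$ then complete the proof.

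The hardest part will be the bookkeeping in the iterative container step, since the witness $F\in\mathcal{F}_0$ may vary from container to container and across iterations. One way around this is to invoke Theorem~\ref{BMS} with a uniform $p=n^{-1/m_k^{\max}}$, where $m_k^{\max}:=\max_{F\in\mathcal{F}_0}m_k(F)$; this is valid because $\mathcal{F}_0$ is finite, and the supersaturation constants $\eta,n_1$ depend only on $\mathcal{F}_0$ and $\delta$, so the iteration terminates in a constant number of rounds with total fingerprint count $2^{o(n^k)}$.
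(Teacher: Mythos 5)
The paper does not prove Theorem~\ref{forbcount}; it is quoted directly from Nagle, R\"odl and Schacht~\cite{nrs}, whose original proof predates the container method and relies on hypergraph regularity. So there is no ``paper's own proof'' for this statement to compare against --- it is used only as a black box to re-derive Theorem~\ref{ramseycount}. Your container-based argument is therefore a genuinely different route from that of~\cite{nrs}, but it closely tracks the paper's own proof of the related Theorem~\ref{ramseycount} via Theorem~\ref{ramseycont}: the hypergraph of copies (Definition~\ref{copiesgraph}), supersaturation (Theorem~\ref{ess}), the codegree bound of Proposition~\ref{copieshyp}, and the $2^{o(n^k)}$ fingerprint count all reappear. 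Your reduction to the finite subfamily $\mathcal{F}_0$ and the two averaging computations are correct and indeed necessary, since the container machinery requires hypergraphs of bounded uniformity, hence bounded $v(F)$.

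Two details in your iterated container step need more care than you give them, though neither is fatal. To justify that a ``bounded-in-$\delta$'' number of iterations suffices, ``strictly smaller'' is not enough; you should choose the density parameter $\eps$ in the $(\mathcal{F},\eps)$-dense condition so small that $\eps\cdot e(\mathcal{H}_F) < \eta n^{v(F)}$ for every $F\in\mathcal{F}_0$, where $\mathcal{H}_F$ is the hypergraph of copies of $F$ in $K^{(k)}_n$ (possible since $\mathcal{F}_0$ is finite and $e(\mathcal{H}_F)=\Theta(n^{v(F)})$). Then after a single refinement step witnessed by $F$, the resulting container can never again trigger supersaturation for that same $F$, so the iteration halts within $|\mathcal{F}_0|$ rounds. (A cleaner variant, mirroring the structure of Theorem~\ref{ramseycont}, applies Theorem~\ref{BMS} once to $\mathcal{H}_F$ over the full vertex set $\binom{[n]}{k}$ for each $F\in\mathcal{F}_0$, intersects the resulting containers, and then invokes supersaturation once on the intersection, avoiding iteration altogether.) Second, when applying Theorem~\ref{BMS} to the restricted hypergraph $\mathcal{H}_F[C]$ you must re-verify the codegree hypothesis; it follows from Proposition~\ref{copieshyp} together with $\Delta_\ell(\mathcal{H}_F[C])\le\Delta_\ell(\mathcal{H}_F)$, $v(\mathcal{H}_F[C])\le v(\mathcal{H}_F)$, and the supersaturation lower bound $e(\mathcal{H}_F[C])\ge\eta' e(\mathcal{H}_F)$, at the harmless cost of replacing $c$ by $c/\eta'$. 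Your uniform choice $p=n^{-1/m_k^{\max}}$ is fine, as enlarging $p$ only weakens the required codegree bounds.
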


Since $G \in \overline{\mathrm{Ram}}(n;H_1,\ldots,H_r)$ if and only if $G$ is an $n$-vertex $k$-graph without a copy of any $F \in \mathrm{Ram}(H_1,\ldots,H_r)$ as a subhypergraph, Theorem~\ref{forbcount} immediately implies Theorem~\ref{ramseycount}.

\subsection{The resilience of being \texorpdfstring{$(H_1,\ldots,H_r)$}{(H_1,...,H_r)}-Ramsey}\label{randomsec}

Recall that $G^{(k)}_{n,p}$ has vertex set $[n]$, where each edge lies in $\binom{[n]}{k}$ and appears with probability $p$, independently of all other edges.
In this section we apply Theorem~\ref{ramseycont} to prove Theorem~\ref{ramres}, which determines $\res(G^{(k)}_{n,p},(H_1,\ldots,H_r)\text{-Ramsey})$ for given fixed $k$-graphs $H_1,\ldots,H_r$.
Explicitly, $\res(G^{(k)}_{n,p},(H_1,\ldots,H_r)\text{-Ramsey})$ is the minimum integer $t$ such that one can remove $t$ edges from $G^{(k)}_{n,p}$ to obtain a $k$-graph $H$ which has an $(H_1,\ldots,H_r)$-free $r$-colouring.

Observe that Theorem~\ref{ramres} together with~(\ref{pibound}) immediately implies the following corollary.

\begin{corollary}[Random Ramsey for hypergraphs]\label{asymhyprandomramsey}
For all positive integers $r,k$ with $k \geq 2$ and $k$-graphs $H_1,\ldots,H_r$ with $m_k(H_1) \geq \ldots \geq m_k(H_r)$ and $\Delta_1(H_i) \geq 2$ for all $i \in [r]$, there exists $C > 0$ such that
$$
\lim_{n \rightarrow \infty} \mathbb{P}\left[ G^{(k)}_{n,p} \text{ is } (H_1,\ldots,H_r)\text{-Ramsey} \right] = 1\quad\text{if } p > Cn^{-1/m_k(H_1)}.
$$
\end{corollary}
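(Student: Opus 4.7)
The plan is to deduce the corollary as an immediate consequence of Theorem~\ref{ramres} combined with the fact that $\pi(H_1,\ldots,H_r) < 1$, as recorded in~(\ref{pibound}). The idea is that Theorem~\ref{ramres} tells us not only that a typical $G^{(k)}_{n,p}$ is Ramsey, but that it is Ramsey very robustly: roughly a $(1-\pi)$-fraction of its edges must be removed to destroy the property. In particular, its resilience is strictly positive, which trivially implies it has the property.

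More concretely, I would first set $\eta := (1 - \pi(H_1,\ldots,H_r))/2$; this is strictly positive by~(\ref{pibound}) together with de Caen's bound $\pi(K_s^{(k)}) \leq 1 - \binom{s-1}{k-1}^{-1}$, where $s = R(H_1,\ldots,H_r)$. Next I would apply Theorem~\ref{ramres} with $\delta := \eta$ to obtain the constant $C > 0$, so that whenever $p > Cn^{-1/m_k(H_1)}$, with high probability
$$
\res\bigl(G^{(k)}_{n,p},(H_1,\ldots,H_r)\text{-Ramsey}\bigr) \;\geq\; \eta \cdot e\bigl(G^{(k)}_{n,p}\bigr).
$$

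To convert this into Ramseyness of $G^{(k)}_{n,p}$ itself, I need the right-hand side to be strictly positive w.h.p., i.e.\ that $e(G^{(k)}_{n,p}) \geq 1$ w.h.p. Since $\Delta_1(H_1) \geq 2$, the hypergraph $H_1$ contains two edges sharing a vertex, a subgraph of $k$-density at least $1/(k-1)$, so $m_k(H_1) \geq 1/(k-1)$ and therefore $p\binom{n}{k} = \Omega(n^{k-(k-1)}) = \Omega(n) \to \infty$; a standard application of the Chernoff bound in Proposition~\ref{chernoff} then gives $e(G^{(k)}_{n,p}) > 0$ w.h.p. Since resilience takes non-negative integer values, $\res \geq 1$ forces $G^{(k)}_{n,p}$ itself to be $(H_1,\ldots,H_r)$-Ramsey, completing the proof. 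There is no real obstacle: the work has already been done in Theorem~\ref{ramres}, and the corollary just extracts from the resilience statement the weaker fact that $G^{(k)}_{n,p}$ has the Ramsey property with high probability.
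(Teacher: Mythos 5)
Your proof is correct and matches the paper's approach exactly: the paper itself states that Corollary~\ref{asymhyprandomramsey} follows immediately from Theorem~\ref{ramres} together with the bound $\pi(H_1,\ldots,H_r)<1$ from~(\ref{pibound}). The only thing you add is the (trivially true) observation that $e(G^{(k)}_{n,p})>0$ w.h.p., which makes the deduction from positive relative resilience to absolute Ramseyness fully explicit.
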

In the case when $m_k(H_1)=m_k(H_2)$, Corollary~\ref{asymhyprandomramsey} generalises Theorem~\ref{asymmthm} since we do not require $H_1$ to be strictly $k$-balanced. Further, Corollary~\ref{asymhyprandomramsey} resolves (the $1$-statement part) of Conjecture~\ref{conjkreu} in the case when 
$m_2(H_1)=m_2(H_2)$.


%
\COMMENT{
This result is closely related to Theorems~5 and~38 in~\cite{asymmramsey}.
We say that $H_1$ is \emph{strictly balanced with respect to $m_k(\cdot,H_2)$} if no proper non-empty subgraph $H_1' \subseteq H_1$ maximises~(\ref{jointdensity}).
The authors prove two versions of Corollary~\ref{randomramsey} with
\begin{itemize}
\item the probability bound $p > Cn^{-1/m_k(H_1,H_2)}$ in the case when $H_1$ is strictly balanced with respect to $m_k(\cdot,H_2)$;
\item the slightly larger probability bound $p>cn^{-1/m_k(H_1,H_2)}\log n$ in general.
\end{itemize}
Our Theorem~\ref{randomramsey} has the disadvantage that our probability bound $p > Cn^{-1/m_k(H_1)}$ is at least $Cn^{-1/m_k(H_1,H_2)}$ with equality if and only if $m_k(H_1)=m_k(H_2)$;%
\COMMENT{think about this: our theorem only gives new cases when $H_1$ is not strictly balanced wrt to $m_k(\cdot,H_2)$ \emph{and} $H_2$ is not strictly balanced wrt to $m_k(H_1,\cdot)$ (otherwise could swap them) What does this mean?.}
 but the advantage that we do not require any strictly balancedness condition.
It was conjectured in the graph case $k=2$ (Conjecture~\ref{conjkreu}), that, when $m_2(H_1)\geq \ldots \geq m_2(H_r)>1$, then $n^{-1/m_2(H_1,H_2)}$ is the correct threshold for the asymmetric Ramsey property; i.e. there exist constants $c,C>0$ such that
if $p < cn^{-1/m_2(H_1,H_2)}$ then $G_{n,p}$ is almost surely not $(H_1,\ldots,H_r)$-Ramsey; while if $p > Cn^{-1/m_2(H_1,H_2)}$ then it almost surely \emph{is}.
Indeed, this was proved in the symmetric case, when $H_1=\ldots=H_r=:H$ is not a forest of stars and paths of length three by R\"odl and Ruci\'nski~\cite{random1,random2,random3} (see Theorem~\ref{randomramsey}).
When $H$ \emph{is} one of these exceptional graphs, there is a coarse threshold due to the appearance of certain small subgraphs.
However, the characterisation is more complicated for general hypergraphs, where there is a third type of threshold.
The authors of~\cite{asymmramsey} proved that, for all $k \geq 4$, there exists a $k$-graph $F$ and $1 < \theta < m_k(F)$ such that $G^{(k)}_{n,p}$ is almost surely not $(F,r)$-Ramsey when $p < cn^{-1/\theta}$, and almost surely $(F,r)$-Ramsey when $p > Cn^{-1/\theta}$.
So there is a sharp threshold, but it does \emph{not} equal the analogous threshold in the graph case.
}

\COMMENT{
Indeed (writing $\pi := \pi(H_1,\ldots,H_r)$), the non-trivial direction is proving that, when $p \gg n^{-1/m_k(H_1)}$,  w.h.p.~$\res(G^{(k)}_{n,p}\text{ is }(H_1,\ldots,H_r)\text{-Ramsey})$ is at least $(1-\pi-o(1))e(G^{(k)}_{n,p})$.
The first observation is that it suffices to prove this for the tuple $(J_1,\ldots,J_r)$ given by Proposition~\ref{Jgraphs}.
So our aim is to prove the probability that there is a non-$(J_1,\ldots,J_r)$-Ramsey graph $H \subseteq G^{(k)}_{n,p}$ with at least $(\pi+\Omega(1))e(G^{(k)}_{n,p})$ edges tends to zero as $n \rightarrow \infty$.
Given that $H$ lies in a container $f(J)$ with fingerprint $J$, we have
\begin{itemize}
\item[(1)] every edge of $J$ must lie in $G^{(k)}_{n,p}$;
\item[(2)] there must be at least $(\pi+\Omega(1))e(G^{(k)}_{n,p})$ edges of $f(J)$ which lie in $G^{(k)}_{n,p}$.
\end{itemize}
But (1) occurs with probability $p^{e(J)}$; and a Chernoff bound implies that (2) has exponentially small probability since $f(J) \leq (\pi+o(1))\binom{n}{k}$.
We then take a union bound over all possible fingerprints, indexing them by their size.}

\medskip
\noindent
\emph{Proof of Theorem~\ref{ramres}.}
Let $0 < \delta < 1$ be arbitrary, $r,k \in \mathbb{N}$ with $k \geq 2$, and let $H_1,\ldots,H_r$ be $k$-graphs as in the statement of the theorem.
Given $n\in \mathbb{N}$, if $p > n^{-1/m_k(H_1)}$, then $p > n^{-(k-1)}$ since $\Delta_1(H_1)\geq 2$.
Proposition~\ref{chernoff} implies that, w.h.p.,
\begin{equation}\label{concentration}
e(G^{(k)}_{n,p}) = \left(1\pm \frac{\delta}{4}\right)p\binom{n}{k}.
\end{equation}
For brevity, write $\pi := \pi(H_1,\ldots,H_r)$.
We will first prove the upper bound
$$
\lim_{n \rightarrow \infty} \mathbb{P}\left[\res(G^{(k)}_{n,p},(H_1,\ldots,H_r)\text{-Ramsey}) \leq (1-\pi + \delta)e(G^{(k)}_{n,p})\right] = 1 \quad\text{if}\quad p > n^{-1/m_k(H_1)}.
$$
For this, we must show that the probability of the event that there exists an $n$-vertex $k$-graph $G\subseteq G^{(k)}_{n,p} $ such that $e(G) \geq (\pi-\delta)e(G^{(k)}_{n,p})$ and $G \in \overline{\mathrm{Ram}}(n;H_1,\ldots,H_r)$, tends to one as $n$ tends to infinity.
This indeed follows:
Let $n$ be sufficiently large so that $\ex^r(n;H_1,\ldots,H_r) \geq (\pi-\delta/2)\binom{n}{k}$.
Let $G^*$ be an $n$-vertex non-$(H_1,\ldots,H_r)$-Ramsey $k$-graph with $e(G^*) = \ex^r(n;H_1,\ldots,H_r)$.
Then, by Proposition~\ref{chernoff}, w.h.p. we have $e(G^* \cap G^{(k)}_{n,p}) = (\pi \pm \delta)e(G^{(k)}_{n,p})$, and $G^*\cap G^{(k)}_{n,p} \in \overline{\mathrm{Ram}}(n;H_1,\ldots,H_r)$, as required.

 For the remainder of the proof, we will focus on the lower bound, namely that there exists $C>0$ such that whenever $p > Cn^{-1/m_k(H_1)}$,
\begin{equation}\label{aimeq}
\mathbb{P}\left[\res(G^{(k)}_{n,p},(H_1,\ldots,H_r)\text{-Ramsey}) \geq (1-\pi-\delta)e(G^{(k)}_{n,p})\right] \rightarrow 1\quad\text{as}\quad n \rightarrow \infty.
\end{equation}
Suppose $n$ is sufficiently large.
Apply Theorem~\ref{ramseycont} with parameters $r,k,\delta/16,(H_1,\ldots,H_r)$ to obtain $D > 0$ and for each $n \in \mathbb{N}$, a collection $\mathcal{S}_r$ and a function $f$ satisfying (i)--(iv).
Now choose $C$ such that $0 < 1/C \ll 1/D,\delta,1/k,1/r$.
Let $p \geq Cn^{-1/m_k(H_1)}$.

Since~(\ref{concentration}) holds with high probability, to prove~(\ref{aimeq}) holds it suffices to show that the probability $G^{(k)}_{n,p}$ contains a non-$(H_1,\ldots,H_r)$-Ramsey $k$-graph with at least $(\pi+\delta/2)p\binom{n}{k}$ edges tends to zero as $n$ tends to infinity.

Suppose that $G^{(k)}_{n,p}$ does contain a non-$(H_1,\ldots,H_r)$-Ramsey $k$-graph $I$ with at least $(\pi+\delta/2)p\binom{n}{k}$ edges.
Then there exist pairwise edge-disjoint $k$-graphs $I_1,\ldots,I_r$ such that $\bigcup_{i \in [r]}I_i=I$ and $(I_1,\ldots,I_r) \in \mathcal{I}_r(n;H_1,\ldots,H_r)$.
Further, there is some $S=(S_1,\ldots,S_r)\in \mathcal{S}_r$ such that $S \subseteq (I_1,\ldots,I_r) \subseteq f(S)$.
Thus, $G^{(k)}_{n,p}$ must contain (the edges of) $\bigcup_{i \in [r]}S_i$ as well as at least $(\pi+\delta/4)p\binom{n}{k}$ edges from $(\bigcup_{i \in [r]}f(S_i))\setminus(\bigcup_{i \in [r]}S_i)$.
(Note here we are using that $e(\bigcup_{i \in [r]}S_i) \leq \delta p\binom{n}{k}/4$, which holds by Theorem~\ref{ramseycont}(ii) and since $0 < 1/C \ll 1/D,1/k,\delta$.)
Writing $s := e(\bigcup_{i \in [r]}S_i)$, the probability $G^{(k)}_{n,p}$ contains $\bigcup_{i \in [r]}S_i$ is $p^s$.
Note that $e((\bigcup_{i \in [r]}f(S_i))\setminus(\bigcup_{i \in [r]}S_i)) \leq (\pi+\delta/8)\binom{n}{k}$ by Theorem~\ref{ramseycont}(iv)(b) and since $n$ is sufficiently large.
So by the first part of Proposition~\ref{chernoff}, the probability $G^{(k)}_{n,p}$ contains at least $(\pi+\delta/4)p\binom{n}{k}$ edges from $(\bigcup_{i \in [r]}f(S_i))\setminus(\bigcup_{i \in [r]}S_i)$ is at most $\exp(-\delta^2p\binom{n}{k}/256) \leq \exp(-\delta^2pn^k/256k^k)$.

Write $N := n^{k-1/m_k(H_1)}$ and $\gamma := \delta^2/256k^k$.
Given some integer $0 \leq s \leq DN$, there are at most $r^s\binom{\binom{n}{k}}{s}$ elements $(S_1,\ldots,S_r) \in \mathcal{S}_r$ such that $e(\cup_{i \in [r]}S_i)=s$.
Indeed, this follows since there are $r^s$ ways to partition a set of size $s$ into $r$ classes.
(Note we only need to consider $s \leq DN$ by Theorem~\ref{ramseycont}(ii).)
Thus, the probability that $G^{(k)}_{n,p}$ does contain a non-$(H_1,\ldots,H_r)$-Ramsey $k$-graph $I$ with at least $(\pi+\delta/2)p\binom{n}{k}$ edges is at most
\begin{align*}
\sum_{s=0}^{DN}r^s\binom{\binom{n}{k}}{s} \cdot p^s \cdot e^{-\gamma n^kp} &\leq (DN+1) (rp)^{DN} \binom{\binom{n}{k}}{DN} e^{-\gamma n^kp} \leq (DN+1)\left(\frac{re^{k+1}pn^k}{DNk^k}\right)^{DN} e^{-\gamma n^kp}\\
&\leq (DN+1)\left(\frac{re^{k+1}C}{Dk^k}\right)^{DN} e^{-\gamma CN} \leq e^{\gamma CN/2}e^{-\gamma CN} = e^{-\gamma CN/2},
\end{align*}
which tends to zero as $n$ tends to infinity.
This completes the proof.
\hfill$\square$

\section*{Acknowledgements}
The authors are grateful to Victor Falgas-Ravry for a helpful conversation on~\cite{templates}, \COMMENT{Mention Victor somewhere else?}
to Hong Liu for a helpful conversation on the graph Ramsey problems considered in this paper,
and to Yury Person for a helpful conversation on Theorem~\ref{ramres}.
The authors are also grateful to the reviewers for their helpful and careful reviews.

\end{document}